\definecolor{dark-gray}{gray}{0.3}
\definecolor{dkgray}{rgb}{.4,.4,.4}
\definecolor{dkblue}{rgb}{0,0,.5}
\definecolor{medblue}{rgb}{0,0,.75}
\definecolor{rust}{rgb}{0.5,0.1,0.1}
\newtheorem{theorem}{Theorem}[section]
\newtheorem{lemma}[theorem]{Lemma}
\newtheorem{proposition}[theorem]{Proposition}
\newtheorem{fact}[theorem]{Fact}
\newtheorem{corollary}[theorem]{Corollary}
\theoremstyle{definition}
\newtheorem{example}[theorem]{Example}
\newtheorem{remark}[theorem]{Remark}
\numberwithin{equation}{section} 
\numberwithin{figure}{section}
\numberwithin{table}{section}
\numberwithin{recipe}{section}
\providecommand{\mathbold}[1]{\bm{#1}}
\renewcommand{\phi}{\varphi}
\newcommand{\eps}{\varepsilon}
\newcommand{\econst}{\mathrm{e}}
\newcommand{\iunit}{\mathrm{i}}
\newcommand{\Id}{\mathbf{I}}
\providecommand{\mathbbm}{\mathbb} 
\newcommand{\R}{\mathbbm{R}}
\newcommand{\CC}{\mathbbm{C}}
\newcommand{\N}{\mathbbm{N}}
\newcommand{\abs}[1]{\left\vert {#1} \right\vert}
\newcommand{\sgn}[1]{\operatorname{sgn}{#1}}
\newcommand{\real}{\operatorname{Re}}
\newcommand{\imag}{\operatorname{Im}}
\newcommand{\diff}[1]{\mathrm{d}{#1}}
\newcommand{\idiff}[1]{\, \diff{#1}}
\newcommand{\Prob}[1]{\mathbbm{P}\left\{{#1}\right\}}
\newcommand{\Expect}{\operatorname{\mathbbm{E}}}
\DeclareMathOperator{\Var}{Var}
\DeclareMathOperator{\mVar}{\mathbold{\mathrm{Var}}}
\newcommand{\vct}[1]{\mathbold{#1}}
\newcommand{\mtx}[1]{\mathbold{#1}}
\newcommand{\trsp}{\mathsf{T}}
\newcommand{\trace}{\operatorname{tr}}
\newcommand{\ntr}{\operatorname{\bar{\trace}}}
\newcommand{\psdle}{\preccurlyeq}
\newcommand{\psdge}{\succcurlyeq}
\newcommand{\ip}[2]{\left\langle {#1},\ {#2} \right\rangle}
\newcommand{\norm}[1]{\left\Vert {#1} \right\Vert}
\newcommand{\triplenorm}[1]{{\left\vert\kern-0.25ex\left\vert\kern-0.25ex\left\vert #1
    \right\vert\kern-0.25ex\right\vert\kern-0.25ex\right\vert}}
\newcommand{\mL}{\mathcal{L}}
\newcommand{\E}{\Expect}
\begin{document}

\title[Matrix concentration via semigroup methods]{Nonlinear matrix concentration via semigroup methods}
\author[D. Huang and J. A. Tropp]{De Huang$^*$ and Joel A. Tropp$^\dagger$}
\thanks{$^*$California Institute of Technology, USA. E-mail: dhuang@caltech.edu}
\thanks{$^\dagger$California Institute of Technology, USA. E-mail: jtropp@cms.caltech.edu}

\begin{abstract}
Matrix concentration inequalities provide information about the
probability that a random matrix is close to its expectation
with respect to the $\ell_2$ operator norm.
This paper uses semigroup methods to derive sharp
nonlinear matrix inequalities.
In particular, it is shown that the classic Bakry--{\'E}mery
curvature criterion implies subgaussian concentration
for ``matrix Lipschitz'' functions.
This argument circumvents the need to develop a matrix version
of the log-Sobolev inequality, a technical obstacle that has
blocked previous attempts to derive matrix concentration inequalities
in this setting.
The approach unifies and extends much of the previous work
on matrix concentration.  When applied to a product measure,
the theory reproduces the matrix Efron--Stein inequalities
due to Paulin et al.  It also handles matrix-valued functions
on a Riemannian manifold with uniformly positive Ricci curvature.
\end{abstract}

\subjclass[2010]{Primary: 60B20, 46N30. Secondary: 60J25, 46L53.} 
\keywords{Bakry--{\'E}mery criterion; concentration inequality; functional inequality; Markov process; matrix concentration; local Poincar{\'e} inequality; semigroup.}

\maketitle

\section{Motivation}

Matrix concentration inequalities describe the probability that a random matrix is close to its expectation,
with deviations measured in the $\ell_2$ operator norm.
The basic models---sums of independent random matrices
and matrix-valued martingales---have been studied extensively,
and they admit a wide spectrum of applications~\cite{tropp2015introduction}.
Nevertheless, we lack a complete understanding of more general
random matrix models.  The purpose of this paper is to develop
a systematic approach for deriving ``nonlinear'' matrix
concentration inequalities.

In the scalar setting, functional inequalities
offer a powerful framework for studying nonlinear concentration.
For example, consider a real-valued Lipschitz function $f(Z)$ of
a real random variable $Z$ with distribution $\mu$.
If the measure $\mu$ satisfies a Poincar{\'e} inequality,
then the variance of $f(Z)$ is controlled by the squared
Lipschitz constant of $f$.  If the measure satisfies
a log-Sobolev inequality, then $f(Z)$
enjoys subgaussian concentration
on the scale of the Lipschitz constant.

Now, suppose that we can construct a semigroup,
acting on real-valued functions, with stationary distribution $\mu$.
Functional inequalities for the measure $\mu$
are intimately related to the convergence of the semigroup.
In particular, the measure admits a Poincar{\'e} inequality
if and only if the semigroup rapidly tends to
equilibrium (in the sense that the variance is exponentially ergodic).
Meanwhile, log-Sobolev inequalities are associated with
finer types of ergodicity.

In recent years, researchers have attempted to use functional
inequalities and semigroup tools to prove matrix concentration
results.  So far, these arguments have met some success,
but they are not strong enough to reproduce the results that are
already available for the simplest random matrix models.
The main obstacle has been the lack of a suitable extension of
the log-Sobolev inequality to the matrix setting.
See Section~\ref{sec:concentration_history} for an account of prior work.

The purpose of this paper is to advance the theory of
semigroups acting on matrix-valued functions and to
apply these methods to obtain matrix concentration
inequalities for nonlinear random matrix models.
To do so, we argue that the classic Bakry--{\'E}mery
curvature criterion for a semigroup acting on real-valued
functions ensures that an associated matrix semigroup
also satisfies a curvature condition.  This property
further implies local ergodicity of the matrix semigroup,
which we can use to prove strong bounds on the trace
moments of nonlinear random matrix models.

The power of this approach is that the Bakry--{\'E}mery
condition has already been verified for a large number
of semigroups.  We can exploit these results to identify
many new settings where matrix concentration is in force.
This program entirely evades the question about the proper way
to extend log-Sobolev inequalities to matrices.

Our approach reproduces many existing results from
the theory of matrix concentration, such as the matrix
Efron--Stein inequalities~\cite{paulin2016efron}.
Among other new results, we can achieve subgaussian concentration
for a matrix-valued ``Lipschitz'' function on a positively curved
Riemannian manifold.  Here is a simplified formulation of this fact.

\begin{theorem}[Euclidean submanifold: Subgaussian concentration]
\label{thm:riemann-simple}
Let $M$ be a compact $n$-dimensional Riemannian submanifold
of a Euclidean space, and let $\mu$ be the uniform measure on $M$.
Suppose that the eigenvalues of the Ricci curvature tensor of $M$ are uniformly
bounded below by $\rho$.  Let $\mtx{f} : M \to \mathbb{H}_d$ be a differentiable function.
For all $t \geq 0$, $$
\mathbb{P}_{\mu} \big\{ \norm{ \smash{\mtx{f} - \E_{\mu} \mtx{f}} } \geq t \big\}
	\leq 2d \, \exp\left( \frac{-\rho t^2}{2 v_{\mtx{f}}} \right)
	\quad\text{where}\quad
	v_{\mtx{f}} := \sup\nolimits_{x \in M} \norm{ \sum_{i=1}^n (\partial_i \mtx{f}(x))^2 }.
$$
Furthermore, for $q = 2$ and $q\geq 3$,
$$
\left[ \Expect_{\mu} \trace ( \mtx{f} - \Expect_{\mu} \mtx{f} )^q \right]^{1/q}
	\leq \rho^{-1/2} \sqrt{q - 1} \left[
	\Expect_{\mu} \trace \left( \sum_{i=1}^n (\partial_i \mtx{f})^2 \right)^{q/2} \right]^{1/q}.
$$
The real-linear space $\mathbb{H}_d$ contains all $d \times d$ Hermitian matrices,
and $\norm{\cdot}$ is the $\ell_2$ operator norm.  The operators $\partial_i$
compute partial derivatives in local (normal) coordinates.
\end{theorem}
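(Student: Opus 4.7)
The plan is to instantiate the paper's general semigroup framework on the heat semigroup of $M$. First I would identify the relevant Markov semigroup: because $M$ is compact, the uniform measure $\mu$ equals the normalized Riemannian volume and is the reversible stationary measure of the heat semigroup $P_t=\econst^{t\Delta_M/2}$ generated by (half) the Laplace--Beltrami operator $\Delta_M$. Extending $P_t$ entrywise to Hermitian-matrix-valued functions produces the matrix Markov semigroup to which the paper's theory applies. A short computation in local normal coordinates at $x$ shows that the carr\'e du champ operator associated with this diffusion is
$$
\Gamma(\mtx{f})(x) \;=\; \sum_{i=1}^n (\partial_i \mtx{f}(x))^2,
$$
where $\partial_1,\dots,\partial_n$ is an orthonormal frame at $x$. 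In particular, the quantity $v_{\mtx{f}}$ in the statement is exactly $\sup_x \norm{\Gamma(\mtx{f})(x)}$.

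Next I would verify the matrix Bakry--\'Emery curvature bound. The classical Bochner--Lichnerowicz--Weitzenb\"ock identity, combined with the hypothesis $\operatorname{Ric}\psdge \rho\, g$, yields the scalar curvature-dimension condition $\Gamma_2(f)\geq \rho\,\Gamma(f)$ for every smooth real-valued $f$. By the transfer principle developed earlier in the paper, this scalar inequality automatically implies its Hermitian-matrix analog
$$
\Gamma_2(\mtx{f})\;\psdge\; \rho\,\Gamma(\mtx{f})
\qquad \text{for all smooth }\mtx{f}\colon M\to\mathbb{H}_d.
$$
This is precisely the step where the paper's central innovation does the work: curvature transmits from scalars to matrices without passing through any form of log-Sobolev inequality.

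With the matrix curvature bound in hand, I would invoke the paper's main trace-moment theorem, which converts a uniform matrix Bakry--\'Emery constant $\rho>0$ into a local Poincar\'e inequality for the matrix semigroup and, from there, into the subgaussian moment estimate
$$
\bigl[\Expect_\mu \trace(\mtx{f}-\Expect_\mu \mtx{f})^q\bigr]^{1/q}
\;\leq\; \rho^{-1/2}\sqrt{q-1}\,\bigl[\Expect_\mu \trace \Gamma(\mtx{f})^{q/2}\bigr]^{1/q}
$$
for $q=2$ and $q\geq 3$. Substituting $\Gamma(\mtx{f})=\sum_i(\partial_i\mtx{f})^2$ gives the second displayed inequality of the theorem. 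To deduce the tail bound, I would combine a uniform bound $\Expect_\mu \trace \Gamma(\mtx f)^{q/2} \leq d\,v_{\mtx f}^{\,q/2}$ with the matrix Markov inequality $\Prob_\mu\{\norm{\mtx{f}-\Expect_\mu\mtx{f}}\geq t\}\leq 2\, t^{-q}\Expect_\mu \trace (\mtx f - \Expect_\mu \mtx f)^q$ for even $q$, and then optimize $q$ in the resulting bound $2d\,(\rho^{-1}(q-1)v_{\mtx f}/t^2)^{q/2}$; the choice $q\approx \rho t^2/v_{\mtx f}$ produces the advertised factor $2d\exp(-\rho t^2/(2 v_{\mtx{f}}))$.

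The hard part is the scalar-to-matrix transfer of the Bakry--\'Emery inequality, since the Bochner identity is naturally a pointwise scalar statement involving the Hessian and the Ricci quadratic form, while its matrix analog must encode both a ``matrix Hessian-squared'' term and a Hermitian-matrix Ricci contribution $\sum_{i,j}\operatorname{Ric}^{ij}(\partial_i\mtx{f})(\partial_j\mtx{f})$ (suitably symmetrized) without losing positivity. In the present paper this task is carried out once and for all by the abstract semigroup transfer machinery established in the preceding sections, so once $\Delta_M$ is verified to be a diffusion generator satisfying the scalar curvature criterion with constant $\rho$, the proof of Theorem~\ref{thm:riemann-simple} reduces to an invocation of the general result and the routine moment-to-tail conversion sketched above.
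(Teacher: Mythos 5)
Your plan for the moment inequality is essentially the paper's own proof: check the scalar Bakry--{\'E}mery criterion with the curvature constant coming from Bochner's identity and $\operatorname{Ric} \psdge \rho\,\mtx{G}$, lift it to matrix-valued functions via Proposition~\ref{prop:BE_equiv}, identify $\Gamma(\mtx{f}) = \sum_i (\partial_i \mtx{f})^2$ in normal coordinates, and invoke Theorem~\ref{thm:polynomial_moment}; that part is sound (modulo a normalization slip noted below). The genuine gap is in your derivation of the first display, the subgaussian tail bound. From the moment bound $\E_\mu \trace\abs{\mtx{f}-\E_\mu\mtx{f}}^q \leq d\,(\rho^{-1}(q-1)v_{\mtx{f}})^{q/2}$ and matrix Chebyshev you get $\mathbb{P}_\mu\{\norm{\mtx{f}-\E_\mu\mtx{f}}\geq t\} \leq d\,\big(\rho^{-1}(q-1)v_{\mtx{f}}/t^2\big)^{q/2}$, and optimizing this over $q$ does \emph{not} give the advertised constant: the optimum occurs where $\rho^{-1}(q-1)v_{\mtx{f}}/t^2 \approx \econst^{-1}$, yielding roughly $d\exp\!\big(-\rho t^2/(2\econst\, v_{\mtx{f}})\big)$, i.e.\ a variance proxy inflated by a factor $\econst$. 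Worse, your specific choice $q \approx \rho t^2/v_{\mtx{f}}$ makes the base of the power approximately $1 - v_{\mtx{f}}/(\rho t^2)$, so the resulting bound is about $d\,\econst^{-1/2}$ and does not even decay in $t$. Moments growing like $\sqrt{q-1}\,\sigma$ can never recover the sharp Gaussian tail $\exp(-t^2/(2\sigma^2))$ by Markov's inequality alone; this is exactly why the paper proves the tail bound through the exponential moment estimate, Theorem~\ref{thm:exponential_moment}, and the matrix Laplace transform (Theorem~\ref{thm:exponential_concentration}), applied to $\pm(\mtx{f}-\E_\mu\mtx{f})$, with the union bound over $\lambda_{\max}$ and $\lambda_{\min}$ supplying the prefactor $2d$. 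To close the gap you must run the Chernoff-type argument on the trace mgf rather than optimize polynomial moments.

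A smaller issue: you take the generator to be $\tfrac12\Delta_M$ but simultaneously assert $\Gamma(\mtx{f}) = \sum_i(\partial_i\mtx{f})^2$ and $\Gamma_2 \psdge \rho\,\Gamma$. With the half-Laplacian the carr{\'e} du champ is $\tfrac12\sum_i(\partial_i\mtx{f})^2$ and the curvature criterion holds with constant $\rho/2$ (equivalently $c = 2/\rho$); the two factors of two cancel in the final inequalities, but as written your normalization is internally inconsistent. The paper's convention (see Section~\ref{sec:Riemannin_intro}) is that the generator is $\Delta_{\mathfrak{g}}$ itself, for which your stated formulas for $\Gamma$ and the constant $c = \rho^{-1}$ are the correct ones.
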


Theorem~\ref{thm:riemann-simple} follows from abstract
concentration inequalities (Theorem~\ref{thm:polynomial_moment} and Theorem~\ref{thm:exponential_concentration})
and the classic fact that the Brownian motion on a positively curved
Riemannian manifold satisfies the Bakry--{\'E}mery criterion~\cite[Sec.~1.16]{bakry2013analysis}.
See Section~\ref{sec:concentration_results_Riemannian} for details.

Particular settings where the theorem is valid
include the unit Euclidean sphere and the special orthogonal group. The variance proxy $v_{\mtx{f}}$ is analogous with the squared Lipschitz
constant that appears in scalar concentration results.  We emphasize that
$\partial_i \mtx{f}$ is an Hermitian matrix, and the variance proxy involves
a sum of the matrix squares.  Thus, the ``Lipschitz constant'' is tailored to
the matrix setting.

As a concrete example, consider the $n$-dimensional sphere $\mathbb{S}^n \subset \R^{n+1}$,
with uniform measure $\sigma_n$ and  curvature $\rho = n - 1$.
Let $\mtx{A}_1, \dots, \mtx{A}_{n+1} \in \mathbb{H}_d$
be fixed matrices.  Construct the random matrix
$$
\mtx{f}(x) = \sum_{i=1}^{n+1} x_i \mtx{A}_i
\quad\text{where $x \sim \sigma_n$.}
$$
By symmetry, $\E_{\sigma_n} \mtx{f} = \mtx{0}$.
Moreover, the variance proxy
$v_{\mtx{f}} \leq \norm{ \sum_{i=1}^{n+1} \mtx{A}_i^2 }$.
Thus, Theorem~\ref{thm:riemann-simple} delivers the bound
$$
\mathbb{P}_{\sigma_n} \big\{ \norm{ \mtx{f} } \geq t \big\}
	\leq 2d \exp\left( \frac{-(n-1) t^2}{2 v_{\mtx{f}}} \right).
$$
See Section~\ref{sec:riemann-exp} for more instances of Theorem~\ref{thm:riemann-simple} in action.

\begin{remark}[Noncommutative moment inequalities]
After this paper was complete, we learned that Junge \& Zeng~\cite{junge2015noncommutative}
have developed a similar method, based on a noncommutative Bakry--Emery criterion,
to obtain moment inequalities in the setting of a von Neumann algebra equipped
with a noncommutative diffusion semigroup.  Their results are not fully comparable
with ours, so we will elaborate on the relationship as we go along.
\end{remark}

\section{Matrix Markov semigroups: Foundations}
\label{sec:matrix_Markov_semigroups}

To start, we develop some basic facts about an important class of Markov semigroups that acts on matrix-valued functions.  Given a Markov process, we define the associated matrix Markov semigroup and its infinitesimal generator.  Then we construct the matrix carr\'e du champ operator and the Dirichlet form.  Afterward, we outline the connection between convergence properties of the semigroup and Poincar{\'e} inequalities.  Parts of our treatment are adapted from~\cite{cheng2017exponential,ABY20:Matrix-Poincare}, but some elements appear to be new.

\subsection{Notation}

Let $\mathbb{M}_d$ be the algebra of all $d \times d$ complex matrices.
The real-linear subspace $\mathbb{H}_d$ contains all Hermitian matrices, and $\mathbb{H}_d^+$ is the cone of all positive-semidefinite matrices.  Matrices are written in boldface. In particular, $\Id_d$ is the $d$-dimensional identity matrix, while $\mtx{f}$, $\mtx{g}$ and $\mtx{h}$ refer to matrix-valued functions.  We use the symbol $\preccurlyeq$ for the semidefinite partial order on Hermitian matrices:
For matrices $\mtx{A}, \mtx{B} \in \mathbb{H}_d$, the inequality $\mtx{A}\preccurlyeq \mtx{B}$ means that $\mtx{B}-\mtx{A}\in \mathbb{H}_d^+$.

For a matrix $\mtx{A}\in\mathbb{M}_d$, we write $\|\mtx{A}\|$ for the $\ell_2$ operator norm,  $\|\mtx{A}\|_\mathrm{HS}$ for the Hilbert--Schmidt norm, and $\trace \mtx{A}$ for the trace.  The normalized trace is defined as 
$\ntr \mtx{A}  := d^{-1} \trace \mtx{A}$. Nonlinear functions bind before the trace.
Given a scalar function $\varphi:\mathbb{R}\rightarrow \mathbb{R}$, we construct
the \emph{standard matrix function} $\phi : \mathbb{H}_d \to \mathbb{H}_d$ using the eigenvalue decomposition:
\[\varphi(\mtx{A}) := \sum_{i=1}^d \varphi(\lambda_i) \, \vct{u}_i\vct{u}_i^* \quad \text{where}\quad \mtx{A} = \sum_{i=1}^d \lambda_i \,\vct{u}_i\vct{u}_i^*.  \]
We constantly rely on basic tools from matrix theory; see~\cite{carlen2010trace}.

Let $\Omega$ be a Polish space equipped with a probability measure $\mu$.  Define $\E_\mu$ and $\mathrm{Var}_{\mu}$ to be the expectation and variance of a real-valued function with respect to the measure $\mu$.  When applied to a random matrix, $\E_\mu$ computes the entrywise expectation.
Nonlinear functions bind before the expectation.

\subsection{Markov semigroups acting on matrices}

This paper focuses on a special class of Markov semigroups acting on matrices.
In this model, a classical Markov process drives the evolution of a matrix-valued function.
Remark~\ref{rem:nc-semigroup} mentions some generalizations.

Suppose that $(Z_t)_{t\geq0} \subset \Omega$ is a time-homogeneous Markov process
on the state space $\Omega$ with stationary measure $\mu$.
For each matrix dimension $d \in \N$, we can construct a Markov semigroup $(P_t)_{t\geq0}$
that acts on a (bounded) measurable matrix-valued function
$\mtx{f} : \Omega\rightarrow\mathbb{H}_d$ according to  
\begin{equation} \label{eqn:semigroup}
(P_t\mtx{f})(z) := \E[\mtx{f}(Z_t)\,|\,Z_0 = z]\quad \text{for all $t\geq 0$ and all $z\in \Omega$}.
\end{equation}
The semigroup property $P_{t+s} = P_{t}P_{s} = P_{s}P_{t}$ holds for all $s, t \geq 0$
because $(Z_t)_{t\geq 0}$ is a homogeneous Markov process.

Note that the operator $P_0$ is the identity map: $P_0 \mtx{f} = \mtx{f}$.
For a fixed $\mtx{A} \in \mathbb{H}_d$, regarded as a constant function on $\Omega$,
the semigroup also acts as the identity: $P_t \mtx{A} = \mtx{A}$ for all $t\geq0$.  Furthermore,
$\Expect_{\mu}[ P_t \mtx{f} ] = \Expect_{\mu}[ \mtx{f} ]$ because $Z_0 \sim \mu$ implies
that $Z_t \sim \mu$ for all $t \geq 0$.  We use these facts without comment.

Although~\eqref{eqn:semigroup} defines a family of semigroups indexed by the matrix dimension $d$,
we will abuse terminology and speak of this collection as if it were as single semigroup.
A major theme of this paper is that facts about the action of the semigroup~\eqref{eqn:semigroup}
on real-valued functions ($d = 1$) imply parallel facts about the action on matrix-valued functions
($d \in \N$).

\begin{remark}[Noncommutative semigroups] \label{rem:nc-semigroup}
There is a very general class of noncommutative semigroups acting
on a von Neumann algebra
where the action is determined by a family of completely
positive unital maps~\cite{junge2015noncommutative}.  This framework includes~\eqref{eqn:semigroup}
as a special case; it covers quantum semigroups~\cite{cheng2017exponential}
acting on $\mathbb{H}_d$ with a fixed matrix dimension $d$;
it also includes more exotic examples.  We will not study
these models, but we will discuss the relationship
between our results and prior work.
\end{remark}

\subsection{Ergodicity and reversibility}

We say that the semigroup $(P_t)_{t\geq0}$ defined in~\eqref{eqn:semigroup}
is \emph{ergodic} if
\begin{equation*}\label{eqn:ergodicity_scalar}
P_tf \rightarrow \E_\mu f\quad  \text{in $L_2(\mu)$}\quad  \text{as}\quad t\rightarrow+\infty \quad \text{for all $f:\Omega\rightarrow\mathbb{R}$}.
\end{equation*}
Furthermore, $(P_t)_{t\geq0}$ is \emph{reversible} if each operator $P_t$
is a \emph{symmetric} operator
on $L_2(\mu)$.  That is, 
\begin{equation}\label{eqn:reversibility_scalar}
\E_\mu [(P_tf) \, g] = \E_\mu [f \, (P_tg)] \quad \text{for all $t\geq0$ and all $f,g:\Omega\rightarrow\mathbb{R}$}.
\end{equation}
Note that these definitions involve only real-valued functions $(d = 1)$. 

In parallel, we say that the Markov process $(Z_t)_{t\geq0}$ is reversible (resp.~ergodic) if the associated Markov semigroup $(P_t)_{t\geq0}$ is reversible (resp.~ergodic).  The reversibility of the process $(Z_t)_{t\geq0}$ implies that, when $Z_0 \sim \mu$, the pair $(Z_t, Z_0)$ is \emph{exchangeable} for all $t \geq 0$.  That is, $(Z_t,Z_0)$ and $(Z_0,Z_t)$ follow the same distribution for all $t\geq0$.

Our matrix concentration results require ergodicity and reversibility
of the semigroup action on matrix-valued functions.
These properties are actually a consequence of the
analogous properties for real-valued functions.
Evidently, the ergodicity of $(P_t)_{t\geq0}$ is equivalent with
the statement 
\begin{equation}\label{eqn:ergodicity}
P_t\mtx{f} \rightarrow \E_\mu \mtx{f}\quad  \text{in $L_2(\mu)$}\quad \text{as}\quad t\rightarrow+\infty \quad \text{for all $\mtx{f}:\Omega\rightarrow\mathbb{H}_d$ and each $d \in \N$.}
\end{equation}
Note that the $L_2(\mu)$ convergence in the matrix setting means $\lim_{t\rightarrow \infty}\E_\mu(P_t\mtx{f}-\E_\mu \mtx{f})^2= \mtx{0}$ , which is readily implied by the $L_2(\mu)$ convergence of all entries of $P_t\mtx{f}-\E_\mu \mtx{f}$.
As for reversibility, we have the following result.

\begin{proposition}[Reversibility] \label{prop:reversibility}
Let $(P_t)_{t \geq 0}$ be the family of semigroups defined in~\eqref{eqn:semigroup}.
The following are equivalent.

\begin{enumerate}
\item	The semigroup acting on real-valued functions is symmetric, as in~\eqref{eqn:reversibility_scalar}.

\item	The semigroup acting on matrix-valued functions is symmetric.  That is, for each $d \in \N$,
\begin{equation}\label{eqn:reversibility_1}
\E_\mu [(P_t\mtx{f}) \, \mtx{g}] = \E_\mu [\mtx{f} \, (P_t\mtx{g})] \quad \text{for all $t\geq0$ and all $\mtx{f},\mtx{g}:\Omega\rightarrow\mathbb{H}_d$}.
\end{equation}
\end{enumerate}
\end{proposition}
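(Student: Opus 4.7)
The plan is to exploit the fact that the Markov semigroup $P_t$ defined in \eqref{eqn:semigroup} acts entrywise on matrix-valued functions. Writing $\mtx{f}=(f_{ij})_{i,j=1}^d$, linearity of conditional expectation gives $(P_t\mtx{f})_{ij}(z)=\E[f_{ij}(Z_t)\,|\,Z_0=z]=(P_tf_{ij})(z)$, where the scalar $P_t$ is extended from real- to complex-valued functions by splitting into real and imaginary parts and using linearity. With this observation in hand, the equivalence reduces to a bookkeeping exercise.

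For (2) $\Rightarrow$ (1), I would set $d=1$ and identify $\mathbb{H}_1$ with $\mathbb{R}$; this recovers \eqref{eqn:reversibility_scalar} as a special case of \eqref{eqn:reversibility_1}.

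For (1) $\Rightarrow$ (2), I would first upgrade scalar reversibility from real-valued to complex-valued functions: writing $f=f_1+\mathrm{i}f_2$ and $g=g_1+\mathrm{i}g_2$ with $f_1,f_2,g_1,g_2$ real-valued and invoking $\mathbb{C}$-bilinearity of $(f,g)\mapsto\E_\mu[fg]$ together with $\mathbb{C}$-linearity of $P_t$, the identity $\E_\mu[(P_tf)g]=\E_\mu[f(P_tg)]$ extends to all bounded complex-valued $f,g$. Next, I would expand the matrix products in \eqref{eqn:reversibility_1} entrywise: for $\mtx{f},\mtx{g}:\Omega\to\mathbb{H}_d$,
\begin{equation*}
\bigl(\E_\mu[(P_t\mtx{f})\mtx{g}]\bigr)_{ik}=\sum_{j=1}^d\E_\mu\bigl[(P_tf_{ij})\,g_{jk}\bigr]=\sum_{j=1}^d\E_\mu\bigl[f_{ij}\,(P_tg_{jk})\bigr]=\bigl(\E_\mu[\mtx{f}(P_t\mtx{g})]\bigr)_{ik},
\end{equation*}
where the middle equality applies complex-scalar reversibility to each pair $(f_{ij},g_{jk})$ of entries, and the outer equalities are definitions of matrix multiplication followed by entrywise expectation.

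The main obstacle (such as it is) is the passage from real- to complex-valued scalar reversibility, since entries of Hermitian matrices are generally complex whereas \eqref{eqn:reversibility_scalar} is stated only for real-valued $f,g$; this is dispatched in one line by the real/imaginary split. No functional-analytic subtlety intervenes because boundedness of $\mtx{f},\mtx{g}$, implicit in the Markov semigroup setup~\eqref{eqn:semigroup}, guarantees absolute convergence of the finite sums and integrals involved.
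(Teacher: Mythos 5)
Your proof is correct and follows essentially the same route as the paper: both arguments reduce the matrix identity to scalar reversibility by viewing the matrix product componentwise and handling the complex entries of Hermitian-valued functions via a real/imaginary split, using that $P_t$ acts entrywise. The only cosmetic difference is that you work directly with the entries $f_{ij}=\mathbf{e}_i^*\mtx{f}\mathbf{e}_j$, whereas the paper sandwiches with arbitrary vectors $\vct{u}^*(\cdot)\vct{v}$ and expands through the identity $\vct{u}^*(\mtx{A}\mtx{B})\vct{v}=\sum_j(\vct{u}^*\mtx{A}\mathbf{e}_j)(\mathbf{e}_j^*\mtx{B}\vct{v})$.
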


\noindent
Let us emphasize that~\eqref{eqn:reversibility_1} now involves matrix products.
The proof of Proposition~\ref{prop:reversibility} appears below in
Section~\ref{sec:reversibility-pf}.

\subsection{Convexity}

Given a convex function $\Phi:\mathbb{H}_d\rightarrow\mathbb{R}$ that is bounded below, the semigroup satisfies
a Jensen inequality of the form
\begin{equation*}\label{eqn:semigroup_Jensen_1}
\Phi(P_t\mtx{f}(z)) = \Phi(\E[\mtx{f}(Z_t)\,|\,Z_0 = z]) \leq \E[\Phi(\mtx{f}(Z_t))\,|\,Z_0 = z]\quad \text{for all $z\in \Omega$}.
\end{equation*}
This is an easy consequence of the definition~\eqref{eqn:semigroup}.  In particular, 
\begin{equation}\label{eqn:semigroup_Jensen_2}
\E_\mu \Phi(P_t\mtx{f})  \leq \E_{Z\sim\mu} \E[\Phi(\mtx{f}(Z_t))\,|\,Z_0 = Z] = \E_{Z_0\sim\mu}[\Phi(\mtx{f}(Z_t))] = \E_\mu \Phi(\mtx{f}) .
\end{equation}
A typical choice of $\Phi$ is the trace function $\trace \phi$, where $\phi : \mathbb{H}_d \to \mathbb{H}_d$ is a standard matrix function. 

\subsection{Infinitesimal generator}

The \emph{infinitesimal generator} $\mL$ of the semigroup~\eqref{eqn:semigroup}
acts on a (nice) measurable function
$\mtx{f}:\Omega\rightarrow\mathbb{H}_d$ via the formula
\begin{equation}\label{eqn:Markov_generator}
(\mL\mtx{f})(z) := \lim_{t\downarrow 0}\frac{(P_t\mtx{f})(z) -\mtx{f}(z)}{t}
\quad\text{for all $z \in \Omega$.}
\end{equation}
Because $(P_t)_{t \geq 0}$ is a semigroup, it follows immediately that 
\begin{equation}\label{eqn:derivative_relation}
\frac{\diff{} }{\diff t}P_t = \mL P_t = P_t\mL\quad \text{for all}\ t\geq0.
\end{equation}
The null space of $\mL$ contains all constant functions:
$\mL \mtx{A} = \mtx{0}$ for each fixed $\mtx{A} \in \mathbb{H}_d$.
Moreover,
\begin{equation}\label{eqn:mean_zero}
\E_\mu[\mL \mtx{f} ] 	= \mtx{0}\quad \text{for all $\mtx{f}:\Omega\rightarrow\mathbb{H}_d$}.
\end{equation}
That is, the infinitesimal generator converts an arbitrary function into a zero-mean function.

We say that the infinitesimal generator $\mL$ is {symmetric} on $L_2(\mu)$ when
its action on real-valued functions is symmetric:
\[
\E_\mu [(\mL f)\,g] = \E_\mu [f \, (\mL g)] \quad \text{for all $f,g:\Omega\rightarrow\mathbb{R}$}.
\]
The generator $\mL$ is symmetric if and only if the semigroup $(P_t)_{t \geq 0}$ is symmetric (i.e., reversible).
In this case, the action of $\mL$ on matrix-valued functions is also symmetric: 
\begin{equation}\label{eqn:reversibility_2}
\E_\mu [(\mL \mtx{f})\,\mtx{g}] = \E_\mu [\mtx{f} \, (\mL\mtx{g})] \quad \text{for all $\mtx{f},\mtx{g}:\Omega\rightarrow\mathbb{H}_d$}.
\end{equation}
This point follows from Proposition~\ref{prop:reversibility}.

As we have alluded,
the limit in \eqref{eqn:Markov_generator} need not exist for all functions.
The set of functions $\mtx{f}:\Omega\rightarrow\mathbb{H}_d$ for which $\mL\mtx{f}$
is defined $\mu$-almost everywhere is called the \emph{domain} 
of the generator.
It is highly technical, but usually unimportant, to characterize the domain of the generator
and related operators.

For our purposes, we may restrict attention to an unspecified
algebra of \emph{suitable} functions (say, smooth and compactly supported)
where all operations involving limits, derivatives, and integrals
are justified.  By approximation, we can extend the main results
to the entire class of functions where the statements make sense.
We refer the reader to the monograph~\cite{bakry2013analysis}
for an extensive discussion about how to make these arguments
airtight.

\subsection{Carr\'e du champ operator and Dirichlet form}

For each $d \in \N$, given the infinitesimal generator $\mL$,
the matrix \textit{carr\'e du champ operator} is the bilinear form
\begin{equation}\label{eqn:definition_Gamma}
\Gamma(\mtx{f},\mtx{g}) := \frac{1}{2}\left[ \mL(\mtx{f}\mtx{g}) - \mtx{f}\mL(\mtx{g}) - \mL(\mtx{f})\mtx{g} \right] \in \mathbb{M}_d \quad \text{for all suitable $\mtx{f},\mtx{g} : \Omega \to \mathbb{H}_d$}.
\end{equation}
The matrix \emph{Dirichlet form} is the bilinear form obtained by integrating the carr{\'e} du champ:
\begin{equation} \label{eqn:Dirichlet_form}
\mathcal{E}(\mtx{f},\mtx{g}) := \E_\mu \Gamma(\mtx{f},\mtx{g}) \in \mathbb{M}_d \quad \text{for all suitable $\mtx{f},\mtx{g} : \Omega \to \mathbb{H}_d$}.
\end{equation}
We abbreviate the associated quadratic forms as $\Gamma(\mtx{f}):=\Gamma(\mtx{f},\mtx{f})$ and $\mathcal{E}(\mtx{f}):=\mathcal{E}(\mtx{f},\mtx{f})$.  Proposition~\ref{prop:Gamma_property}
states that both these quadratic forms 
are positive operators in the sense that they take values in the cone of
positive-semidefinite Hermitian matrices.
In many instances, the carr\'e du champ $\Gamma(\mtx{f})$
has a natural interpretation as the squared magnitude of the derivative of $\mtx{f}$,
while the Dirichlet form $\mathcal{E}(\mtx{f})$
reflects the total energy of the function $\mtx{f}$. 

Using~\eqref{eqn:mean_zero}, we can rewrite the Dirichlet form as
\begin{align}\label{eqn:Dirichlet_expression_1}
\mathcal{E}(\mtx{f},\mtx{g}) = \E_\mu\Gamma(\mtx{f},\mtx{g}) = -\frac{1}{2} \E_\mu \left[\mtx{f}\mL(\mtx{g}) + \mL(\mtx{f})
\mtx{g}\right] 
\end{align}
When the semigroup $(P_t)_{t \geq 0}$ is reversible,
then~\eqref{eqn:reversibility_2} and~\eqref{eqn:Dirichlet_expression_1} indicate that
\begin{align}\label{eqn:Dirichlet_expression_2}
\mathcal{E}(\mtx{f},\mtx{g}) = -\E_\mu [\mtx{f}\mL(\mtx{g})] = -\E_\mu [\mL(\mtx{f})\mtx{g}].
\end{align}
These alternative expressions are very useful for calculations.

\subsection{The matrix Poincar\'e inequality}

For each function $\mtx{f}:\Omega\rightarrow\mathbb{H}_d$,
the \textit{matrix variance} with respect to the distribution $\mu$ is defined as
\begin{equation} \label{eqn:matrix_variance}
\mVar_\mu[\mtx{f}] := \E_\mu\big[(\mtx{f}-\E_\mu\mtx{f})^2\big] = \E_\mu[\mtx{f}^2] - (\E_\mu\mtx{f})^2\in \mathbb{H}_d^+.
\end{equation}
We say that the Markov process satisfies a \textit{matrix Poincar\'e inequality} with constant $\alpha>0$ if 
\begin{equation}\label{eqn:matrix_Poincare}
\mVar_\mu(\mtx{f})\preccurlyeq \alpha \cdot \mathcal{E}(\mtx{f})\quad \text{for all suitable $\mtx{f} : \Omega \to \mathbb{H}_d$}.
\end{equation}
This definition seems to be due to Chen et al.~\cite{cheng2017exponential};
see also Aoun et al.~\cite{ABY20:Matrix-Poincare}.

When the matrix dimension $d = 1$, the inequality~\eqref{eqn:matrix_Poincare} reduces to the usual scalar Poincar{\'e} inequality for the semigroup.  For the semigroup~\eqref{eqn:semigroup},
the scalar Poincar{\'e} inequality ($d = 1$) already implies the
matrix Poincar{\'e} inequality (for all $d \in \N$).  Therefore, to check the validity of~\eqref{eqn:matrix_Poincare},
it suffices to consider real-valued functions.

\begin{proposition}[Poincar{\'e} inequalities: Equivalence] \label{prop:poincare_equiv}
For each $d \in \N$, let $(P_t)_{t\geq 0}$ be the semigroup defined in~\eqref{eqn:semigroup}.
The following are equivalent:

\begin{enumerate}
\item \label{Poincare_inequality_scalar}
\textbf{Scalar Poincar\'e inequality.}
$\Var_\mu[f]\leq \alpha \cdot \mathcal{E}(f)$ for all suitable $f:\Omega\to \mathbb{R}$.
\item \label{Poincare_inequality_matrix}
\textbf{Matrix Poincar\'e inequality.}
$\mVar_\mu[\mtx{f}]\preccurlyeq \alpha \cdot \mathcal{E}(\mtx{f})$ for all suitable $\mtx{f}: \Omega \to \mathbb{H}_d$ and all $d \in \N$. 
\end{enumerate}
\end{proposition}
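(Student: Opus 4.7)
The direction (2) $\Rightarrow$ (1) is immediate: restrict to matrix dimension $d = 1$ and identify $\mVar_\mu[f]$ with $\Var_\mu[f]$. The content lies in (1) $\Rightarrow$ (2), and the plan is to reduce the matrix Poincar\'e inequality to its scalar counterpart by testing against arbitrary vectors $\vct{u} \in \mathbb{C}^d$. Two structural features drive the argument: because the semigroup~\eqref{eqn:semigroup} acts entrywise on matrix-valued functions, so does the generator $\mL$, and consequently $\mL$ commutes with sandwiching by a constant vector, $\vct{v}^*\, \mL(\mtx{A})\, \vct{u} = \mL\bigl(\vct{v}^* \mtx{A} \vct{u}\bigr)$ for any suitable $\mtx{A}$ and constant $\vct{u}, \vct{v}$.

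First unfold the matrix variance against $\vct{u}$:
\[
\vct{u}^* \mVar_\mu[\mtx{f}]\, \vct{u}
\;=\; \E_\mu \bigl\lVert (\mtx{f} - \E_\mu \mtx{f})\vct{u} \bigr\rVert_2^{\,2}
\;=\; \sum_{j=1}^{d} \Var_\mu^{\mathbb{C}}[g_j],
\]
where $g_j(z) := \vct{e}_j^* \mtx{f}(z) \vct{u}$ and $\Var_\mu^{\mathbb{C}}[g] := \E_\mu |g - \E_\mu g|^{2}$ denotes the complex variance. Next, expand $\vct{u}^*\Gamma(\mtx{f})\vct{u}$ term by term using~\eqref{eqn:definition_Gamma}. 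Because $\mtx{f}$ is Hermitian we have $(\vct{u}^*\mtx{f})_k = \overline{g_k}$ and $(\mtx{f}\vct{u})_k = g_k$; the entrywise action of $\mL$ then produces $\vct{u}^*\mL(\mtx{f}^2)\vct{u} = \sum_k \mL(|g_k|^2)$, $\vct{u}^*\mtx{f}\,\mL(\mtx{f})\vct{u} = \sum_k \overline{g_k}\, \mL(g_k)$, and $\vct{u}^*\mL(\mtx{f})\mtx{f}\vct{u} = \sum_k \mL(\overline{g_k})\, g_k$. Assembling these and taking expectation yields
\[
\vct{u}^* \mathcal{E}(\mtx{f})\,\vct{u}
\;=\; \sum_{j=1}^{d} \mathcal{E}^{\mathbb{C}}(g_j),
\qquad
\mathcal{E}^{\mathbb{C}}(g) := \tfrac{1}{2} \E_\mu \bigl[\mL(|g|^2) - \overline{g}\, \mL(g) - \mL(\overline{g})\, g\bigr],
\]
the natural complex extension of the scalar Dirichlet form.

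A direct calculation shows that $\mathcal{E}^{\mathbb{C}}$ is real-valued and additive on the real/imaginary decomposition: writing $g = g^{\mathrm{R}} + \iunit\, g^{\mathrm{I}}$ gives $\Var_\mu^{\mathbb{C}}[g] = \Var_\mu[g^{\mathrm{R}}] + \Var_\mu[g^{\mathrm{I}}]$ and $\mathcal{E}^{\mathbb{C}}(g) = \mathcal{E}(g^{\mathrm{R}}) + \mathcal{E}(g^{\mathrm{I}})$. Thus the hypothesized scalar Poincar\'e inequality lifts to $\Var_\mu^{\mathbb{C}}[g] \leq \alpha\, \mathcal{E}^{\mathbb{C}}(g)$ for every complex-valued $g$. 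Applying this bound to each $g_j$ and summing over $j$ delivers
\[
\vct{u}^* \mVar_\mu[\mtx{f}]\, \vct{u} \;\leq\; \alpha \, \vct{u}^* \mathcal{E}(\mtx{f})\,\vct{u} \qquad \text{for every } \vct{u}\in\mathbb{C}^d,
\]
which is equivalent to the desired matrix inequality $\mVar_\mu[\mtx{f}] \preccurlyeq \alpha \, \mathcal{E}(\mtx{f})$.

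The main conceptual obstacle is choosing the correct scalar reduction. The obvious candidate $g(z) := \vct{u}^*\mtx{f}(z)\vct{u}$ fails, because $\Var_\mu[g]$ is strictly smaller than $\vct{u}^*\mVar_\mu[\mtx{f}]\vct{u}$ in general: it captures only $|\ip{\vct{u}}{(\mtx{f}-\E\mtx{f})\vct{u}}|^2$, not the full $\|(\mtx{f}-\E\mtx{f})\vct{u}\|_2^2$. The right move is to extract the \emph{entire vector} $\mtx{f}\vct{u}$ and sum the $d$ scalar variances of its coordinates; once this is done, the matching identity for the Dirichlet form falls out cleanly from the entrywise action of the semigroup on matrix-valued functions.
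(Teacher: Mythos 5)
Your proof is correct and follows essentially the same route as the paper: both arguments test the matrix inequality against an arbitrary vector $\vct{u}\in\CC^d$, decompose $\vct{u}^*\mVar_\mu[\mtx{f}]\vct{u}$ and $\vct{u}^*\mathcal{E}(\mtx{f})\vct{u}$ into sums of scalar variances and Dirichlet forms of the sandwiched coordinate functions (your $g_j=\mathbf{e}_j^*\mtx{f}\vct{u}$ is just the conjugate of the paper's $\vct{u}^*\mtx{f}\mathbf{e}_j$), split into real and imaginary parts, apply the scalar Poincar{\'e} inequality, and sum over $j$. The only cosmetic difference is your intermediate use of a complex variance and complex Dirichlet form, which the paper folds directly into its dimension-reduction identity (Lemma~\ref{lem:dimension_reduction} and formula~\eqref{eqn:Ramon}).
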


\noindent
The proof of Proposition~\ref{prop:poincare_equiv} appears in Section~\ref{sec:scalar_matrix}.
We are grateful to Ramon van Handel for this observation.

\subsection{Poincar{\'e} inequalities and ergodicity}

As in the scalar case, the matrix Poincar{\'e} inequality~\eqref{eqn:matrix_Poincare} is a powerful tool for understanding
the action of a semigroup on matrix-valued functions.  Assuming ergodicity, the Poincar{\'e}
inequality is equivalent with the exponential convergence of the Markov semigroup $(P_t)_{t \geq 0}$
to the expectation operator $\E_{\mu}$.  The constant $\alpha$ determines the rate
of convergence.  The following result makes this principle precise.

\begin{proposition}[Poincar\'e inequality: Consequences] \label{prop:matrix_poincare} 
Consider a Markov semigroup $(P_t)_{t \geq 0}$
with stationary measure $\mu$ acting on suitable functions $\mtx{f} : \Omega \to \mathbb{H}_d$
for a fixed $d \in \N$, as defined in~\eqref{eqn:semigroup}. 
The following are equivalent:
\begin{enumerate}
\item \label{Poincare_inequality}
\textbf{Poincar\'e inequality.}
$\mVar_\mu[\mtx{f}]\preccurlyeq \alpha \cdot \mathcal{E}(\mtx{f})$ for all suitable $\mtx{f}: \Omega \to \mathbb{H}_d$.
\item  \label{variance_convergence}
\textbf{Exponential ergodicity of variance.}
$\mVar_\mu[P_t\mtx{f}]\preccurlyeq \econst^{-2t/\alpha} \cdot \mVar_\mu[\mtx{f}]$ for all $t\geq 0$ and for all suitable $\mtx{f}:\Omega \to \mathbb{H}_d$.
\end{enumerate}
Moreover, if the semigroup $(P_t)_{t\geq 0}$ is reversible and ergodic, then the statements above are also equivalent to the following:
\begin{enumerate}[resume]
\item \label{energy_convergence}
\textbf{Exponential ergodicity of energy.}
$\mathcal{E}(P_t\mtx{f})\preccurlyeq \econst^{-2t/\alpha} \cdot \mathcal{E}(\mtx{f})$ for all $t \geq 0$ and for all suitable $\mtx{f}:\Omega \to \mathbb{H}_d$.
\end{enumerate} 
\end{proposition}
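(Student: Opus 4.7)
The plan is to convert each statement into a matrix-valued ordinary differential inequality in the semidefinite order and then integrate via a matrix Gr\"onwall argument. Two identities will drive everything. Differentiating $\mVar_\mu[P_t\mtx{f}] = \E_\mu[(P_t\mtx{f})^2] - (\E_\mu\mtx{f})^2$ using~\eqref{eqn:derivative_relation} and invoking~\eqref{eqn:Dirichlet_expression_1} gives
\[
\frac{\diff{}}{\diff t}\,\mVar_\mu[P_t\mtx{f}] \;=\; -2\, \mathcal{E}(P_t\mtx{f}).
\]
Under reversibility, combining~\eqref{eqn:Dirichlet_expression_2} with~\eqref{eqn:reversibility_2} to symmetrize the second-order term $\E_\mu[(P_t\mtx{f})\mL^2 P_t\mtx{f}]$ yields
\[
\frac{\diff{}}{\diff t}\,\mathcal{E}(P_t\mtx{f}) \;=\; -2\, \E_\mu\bigl[(\mL P_t\mtx{f})^2\bigr].
\]

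For the equivalence of~(1) and~(2), I substitute the Poincar\'e inequality applied to $P_t\mtx{f}$ into the first identity, obtaining the matrix ODE $V'(t)\preccurlyeq -\tfrac{2}{\alpha}V(t)$ for $V(t):=\mVar_\mu[P_t\mtx{f}]$. Then $\tfrac{\diff{}}{\diff t}(\econst^{2t/\alpha} V(t))\preccurlyeq \mtx{0}$ in the PSD order, and integrating from $0$ to $t$ yields~(2). Conversely, dividing~(2) by $t>0$ and sending $t\to 0^{+}$ identifies $V'(0) = -2\,\mathcal{E}(\mtx{f})$ on the left and $-\tfrac{2}{\alpha}V(0)$ on the right, recovering Poincar\'e.

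For~(3)~$\Rightarrow$~(1), ergodicity gives $\mVar_\mu[P_t\mtx{f}]\to \mtx{0}$, so integrating the first identity against~(3) yields $\mVar_\mu[\mtx{f}] = 2\int_0^{\infty} \mathcal{E}(P_t\mtx{f})\idiff t \preccurlyeq \alpha\,\mathcal{E}(\mtx{f})$. The delicate direction is~(1)~$\Rightarrow$~(3), which I plan to handle by a bootstrap. Since $\mL\mtx{f}$ is mean-zero by~\eqref{eqn:mean_zero}, the already-established~(2) applied to $\mL\mtx{f}$ yields $\E_\mu[(\mL P_s\mtx{f})^2] \preccurlyeq \econst^{-2s/\alpha}\,\E_\mu[(\mL\mtx{f})^2]$. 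Integrating the second identity from $t$ to $\infty$, with ergodicity ensuring $\mathcal{E}(P_s\mtx{f})\to \mtx{0}$, produces
\[
\mathcal{E}(P_t\mtx{f}) \;=\; 2\int_t^{\infty} \E_\mu\bigl[(\mL P_s\mtx{f})^2\bigr]\idiff s \;\preccurlyeq\; \alpha\, \econst^{-2t/\alpha}\, \E_\mu\bigl[(\mL\mtx{f})^2\bigr].
\]
Evaluating at $t=0$ gives a ``strengthened Poincar\'e'' inequality $\mathcal{E}(\mtx{f}) \preccurlyeq \alpha\,\E_\mu[(\mL\mtx{f})^2]$ for every suitable~$\mtx{f}$. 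Replacing $\mtx{f}$ by $P_s\mtx{f}$ and plugging into the second identity gives $\tfrac{\diff{}}{\diff s}\mathcal{E}(P_s\mtx{f}) \preccurlyeq -\tfrac{2}{\alpha}\mathcal{E}(P_s\mtx{f})$, and a second Gr\"onwall step delivers~(3).

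The conceptual difficulty concentrates in~(1)~$\Rightarrow$~(3): a naive attempt using the second identity alone demands a strengthened inequality that is not evident from~(1), and the scalar proof usually invokes a Cauchy--Schwarz trick or the functional calculus of $(-\mL)^{1/2}$, both awkward in the matrix setting. The bootstrap above sidesteps this by extracting the strengthened inequality from~(2) applied to $\mL\mtx{f}$. The remaining burden---justifying the semidefinite-order ergodic limits $\mVar_\mu[P_t\mtx{f}]$, $\mathcal{E}(P_t\mtx{f})$, and $\E_\mu[(\mL P_t\mtx{f})^2]\to\mtx{0}$, and the uniform integrability needed to push the identities to $t=\infty$---is technical and should reduce to scalar ergodic theory together with the density arguments sketched in~\cite{bakry2013analysis}.
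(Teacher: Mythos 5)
Your proposal is correct, and it coincides with the paper's proof in three of the four implications: (1)~$\Rightarrow$~(2) via the variance-dissipation identity, the matrix Poincar\'e inequality applied to $P_t\mtx{f}$, and the Gr\"onwall-type Lemma~\ref{lem:matrix_differential_inequality}; (2)~$\Rightarrow$~(1) by dividing by $t$ and letting $t\downarrow 0$; and (3)~$\Rightarrow$~(1) by integrating the variance-dissipation identity under ergodicity. The genuine difference is in (1)~$\Rightarrow$~(3). The paper obtains the key intermediate bound $\mathcal{E}(\mtx{f}) \preccurlyeq \alpha\,\E_\mu[(\mL\mtx{f})^2]$ purely algebraically: it writes $\mathcal{E}(\mtx{f}) = -\tfrac12\E_\mu[(\mtx{f}-\E_\mu\mtx{f})\mL(\mtx{f}) + \mL(\mtx{f})(\mtx{f}-\E_\mu\mtx{f})]$, applies $\mtx{A}\mtx{B}+\mtx{B}\mtx{A}\preccurlyeq s\mtx{A}^2+s^{-1}\mtx{B}^2$, and then the Poincar\'e inequality, so that this direction needs only reversibility. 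You instead bootstrap the same bound from statement~(2) applied to the mean-zero function $\mL\mtx{f}$, integrating the energy-dissipation identity out to infinity; this is valid (and even yields the explicit quantitative estimate $\mathcal{E}(P_t\mtx{f})\preccurlyeq \alpha\,\econst^{-2t/\alpha}\E_\mu[(\mL\mtx{f})^2]$), but it is more expensive: you must assume $\mL\mtx{f}$ is itself a suitable function, you invoke ergodicity in a direction where the paper needs none, and the limit $\mathcal{E}(P_T\mtx{f})\to\mtx{0}$ is not an immediate consequence of the paper's $L_2$-ergodicity definition, which only gives $\mVar_\mu[P_T\mtx{f}]\to\mtx{0}$. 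That limit does hold under reversibility---for instance, reduce to scalar entries via Lemma~\ref{lem:dimension_reduction} and bound $\mathcal{E}(P_t g) = -\E_\mu[(P_t g-\E_\mu g)\,\mL P_t g]\le \norm{P_t g - \E_\mu g}_{L_2(\mu)}\,\norm{\mL g}_{L_2(\mu)}$, using $\mL P_t g = P_t\mL g$ and the fact that $P_t$ is an $L_2(\mu)$-contraction---but this is an additional small lemma you would need to record, not merely a routine density remark. Once you have $\mathcal{E}(\mtx{f})\preccurlyeq\alpha\,\E_\mu[(\mL\mtx{f})^2]$, your final step (apply it to $P_s\mtx{f}$, get $\frac{\diff{}}{\diff s}\mathcal{E}(P_s\mtx{f})\preccurlyeq -\tfrac{2}{\alpha}\mathcal{E}(P_s\mtx{f})$, and integrate) is exactly the paper's.
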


\noindent
Section~\ref{sec:equivalence_Poincare}
contains the proof of Proposition~\ref{prop:matrix_poincare},
which is essentially the same as in the scalar case~\cite[Theorem 2.18]{van550probability}.

\begin{remark}[Quantum semigroups]
Proposition~\ref{prop:matrix_poincare} only concerns the action of
a semigroup on matrices of fixed dimension $d$.  As such, the result
can be adapted to quantum Markov semigroups.
A partial version of the result for this general setting
already appears in \cite[Remark IV.2]{cheng2017exponential}.
\end{remark}

\subsection{Iterated carr{\'e} du champ operator}

To better understand how quickly a Markov semigroup converges to equilibrium, it is valuable to consider the \textit{iterated carr\'e du champ operator}.  In the matrix setting, this operator is defined as 
\begin{equation}\label{eqn:definition_Gamma2}
\Gamma_2(\mtx{f},\mtx{g}) := \frac{1}{2}\left[ \mL\Gamma(\mtx{f},\mtx{g}) - \Gamma(\mtx{f},\mL(\mtx{g})) - \Gamma(\mL(\mtx{f}),\mtx{g}) \right] \in \mathbb{M}_d \quad  \text{for all suitable $\mtx{f},\mtx{g} : \Omega \to \mathbb{H}_d$}.
\end{equation}
As with the carr\'e du champ, we abbreviate the quadratic form $\Gamma_2(\mtx{f}) := \Gamma_2(\mtx{f},\mtx{f})$.  We remark that this quadratic form is not necessarily a positive operator.
Rather, $\Gamma_2(\mtx{f})$ reflects the ``magnitude'' of the squared Hessian
of $\mtx{f}$ plus a correction factor that reflects the ``curvature'' of the matrix semigroup.

When the underlying Markov semigroup $(P_t)_{t \geq 0}$ is reversible, it holds that 
\[\E_\mu \Gamma_2(\mtx{f},\mtx{g}) = \E_\mu\left[\mL(\mtx{f}) \, \mL(\mtx{g})\right]\quad \text{for all suitable $\mtx{f},\mtx{g} : \Omega \to \mathbb{H}_d$}.\]
Thus, for a reversible semigroup, the average value $\Expect_{\mu} \Gamma_2(\mtx{f})$ is a positive-semidefinite matrix.

\subsection{Bakry--{\'E}mery criterion}
\label{sec:local_matrix_Poincare_inequality}

When the iterated carr{\'e} du champ is comparable with the carr{\'e} du champ, we can obtain more information about the convergence of the Markov semigroup.
We say the semigroup satisfies the \textit{matrix Bakry--\'Emery criterion} with constant $c>0$ if 
\begin{equation}\label{Bakry-Emery}
\Gamma(\mtx{f}) \psdle c \cdot \Gamma_2(\mtx{f}) \quad \text{for all suitable $\mtx{f} : \Omega \to \mathbb{H}_d$}. 
\end{equation}
Since $\Gamma(\mtx{f})$ and $\Gamma_2(\mtx{f})$ are functions, one interprets this condition
as a pointwise inequality that holds $\mu$-almost everywhere in $\Omega$.  It reflects
uniform positive curvature of the semigroup.

When the matrix dimension $d = 1$, the condition~\eqref{Bakry-Emery}
reduces to the classic Bakry--{\'E}mery criterion~\cite[Sec.~1.16]{bakry2013analysis}.
For a semigroup of the form~\eqref{eqn:semigroup}, the scalar result actually
implies the matrix result for all $d \in \N$.

\begin{proposition}[Bakry--{\'E}mery: Equivalence] \label{prop:BE_equiv}
Let $(P_t)_{t\geq 0}$ be the family of semigroups defined in~\eqref{eqn:semigroup}.
The following statements are equivalent:

\begin{enumerate}
\item \label{Bakry-Emery_criterion_scalar} \textbf{Scalar Bakry--\'Emery criterion.}
$\Gamma(f)\leq c \cdot \Gamma_2(f)$ for all suitable $f:\Omega\to \mathbb{R}$.

\item \label{Bakry-Emery_criterion_matrix} \textbf{Matrix Bakry--\'Emery criterion.}
$\Gamma(\mtx{f})\preccurlyeq c \cdot \Gamma_2(\mtx{f})$ for all suitable $\mtx{f}:\Omega \to \mathbb{H}_d$
and all $d \in \N$.
\end{enumerate}
\end{proposition}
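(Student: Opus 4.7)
The implication (\ref{Bakry-Emery_criterion_matrix}) $\Rightarrow$ (\ref{Bakry-Emery_criterion_scalar}) is immediate by specializing to $d=1$, so the content is the converse. My plan is to exploit two structural features of the semigroup~\eqref{eqn:semigroup}: (a) the generator $\mL$ acts \emph{entrywise} on matrix-valued functions, so $\mL(f \mtx{A}) = (\mL f)\, \mtx{A}$ for any scalar function $f$ and any fixed $\mtx{A} \in \mathbb{H}_d$; and (b) both $\Gamma$ and $\Gamma_2$ are bilinear in their arguments. Together, these reduce matrix Bakry--\'Emery to a positive-semidefinite statement about a Gram-type matrix of scalar values, exactly paralleling the strategy of Proposition~\ref{prop:poincare_equiv}.

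First I would write $\mtx{f}: \Omega \to \mathbb{H}_d$ as a finite sum $\mtx{f} = \sum_{k=1}^{N} f_k \mtx{A}_k$, where the $f_k$ are real-valued suitable scalar functions (obtained, e.g., from the real and imaginary parts of the entries of $\mtx{f}$) and the $\mtx{A}_k \in \mathbb{H}_d$ are fixed Hermitian basis matrices. Using (a) and (b), the definitions~\eqref{eqn:definition_Gamma} and~\eqref{eqn:definition_Gamma2} expand as
\[
\Gamma(\mtx{f}) = \sum_{k,l=1}^{N} \Gamma(f_k, f_l)\, \mtx{A}_k \mtx{A}_l, \qquad \Gamma_2(\mtx{f}) = \sum_{k,l=1}^{N} \Gamma_2(f_k, f_l)\, \mtx{A}_k \mtx{A}_l,
\]
where on the right $\Gamma(f_k,f_l)$ and $\Gamma_2(f_k,f_l)$ are the \emph{scalar} carr\'e du champ and its iterate. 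The scalar Bakry--\'Emery hypothesis applied to $\sum_k c_k f_k$ for arbitrary $\vct{c} \in \R^N$ yields, pointwise on $\Omega$,
\[
\sum_{k,l} c_k c_l \bigl[ c \cdot \Gamma_2(f_k, f_l) - \Gamma(f_k, f_l) \bigr] \;\geq\; 0,
\]
so the $N \times N$ real symmetric matrix $\mtx{M}(z) := c \cdot [\Gamma_2(f_k,f_l)(z)]_{k,l} - [\Gamma(f_k,f_l)(z)]_{k,l}$ is positive semidefinite at every $z$.

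To conclude, I factor $\mtx{M}(z) = \mtx{U}(z) \mtx{U}(z)^{\trsp}$ pointwise with $\mtx{U}(z)$ real, and substitute into the expansion:
\[
c \cdot \Gamma_2(\mtx{f})(z) - \Gamma(\mtx{f})(z) \;=\; \sum_{k,l} \mtx{M}_{kl}(z)\, \mtx{A}_k \mtx{A}_l \;=\; \sum_{j} \mtx{B}_j(z)^2, \qquad \mtx{B}_j(z) := \sum_{k} \mtx{U}_{kj}(z)\, \mtx{A}_k.
\]
Since $\mtx{U}_{kj}(z) \in \R$ and each $\mtx{A}_k \in \mathbb{H}_d$, every $\mtx{B}_j(z)$ is Hermitian, so the right-hand side is a sum of squares of Hermitian matrices and hence positive semidefinite. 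This is precisely the matrix Bakry--\'Emery inequality.

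The step I expect to require the most care is the bilinear expansion, since $\Gamma_2$ is a second-order object built iteratively from $\mL$ and $\Gamma$ and one has to verify that it distributes across the decomposition $\mtx{f} = \sum_k f_k \mtx{A}_k$ while the fixed matrices $\mtx{A}_k$ remain in a prescribed order. This is a routine but slightly finicky consequence of the entrywise action (a), which itself is immediate from the probabilistic formula $(P_t \mtx{f})(z) = \E[\mtx{f}(Z_t) \mid Z_0 = z]$. Once this is in hand, the ``Gram-matrix-to-sum-of-squares'' identity $\sum_{k,l} \mtx{M}_{kl}\, \mtx{A}_k \mtx{A}_l = \sum_j \mtx{B}_j^2$ is a standard finite-dimensional observation.
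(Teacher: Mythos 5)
Your proof is correct, but it takes a genuinely different route from the paper's. The paper proves a dimension-reduction identity (Lemma~\ref{lem:dimension_reduction}), based on the elementary expansion \eqref{eqn:Ramon}: for every $\vct{u} \in \CC^d$, the quadratic form $\vct{u}^*\Gamma(\mtx{f})\vct{u}$ equals the sum over $j$ of the \emph{scalar} carr\'e du champs of $\real(\vct{u}^*\mtx{f}\mathbf{e}_j)$ and $\imag(\vct{u}^*\mtx{f}\mathbf{e}_j)$, and likewise for $\Gamma_2$; the scalar criterion is then applied termwise to these $2d$ functions (which depend on $\vct{u}$), so no cross terms ever appear. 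You instead fix a Hermitian basis $\{\mtx{A}_k\}$, expand $\Gamma(\mtx{f})$ and $\Gamma_2(\mtx{f})$ bilinearly in the real coefficient functions $f_k$ --- an expansion that is valid for exactly the reason you give, namely that $P_t$, hence $\mL$, acts entrywise --- then feed all real linear combinations $\sum_k c_k f_k$ into the scalar criterion to obtain pointwise positive semidefiniteness of the coefficient matrix $\mtx{M}(z)$, and finally convert that into the operator inequality via the sum-of-squares identity $\sum_{k,l}\mtx{M}_{kl}\mtx{A}_k\mtx{A}_l = \sum_j \mtx{B}_j^2$ with Hermitian $\mtx{B}_j$. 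What each buys: the paper's identity needs the scalar criterion only for the finitely many functions attached to each test vector and is reused verbatim for Proposition~\ref{prop:poincare_equiv}, whereas your Gram-matrix argument requires no special algebraic identity and would transfer to any setting where the bilinear forms distribute over a basis decomposition, at the price of invoking the scalar criterion for a $d^2$-parameter family of combinations plus the factorization step. One cosmetic caveat, shared equally by the paper's proof: since the scalar criterion holds $\mu$-a.e.\ for each fixed function, to conclude $\mtx{M}(z) \succcurlyeq \mtx{0}$ almost everywhere you should run the argument over a countable dense set of coefficient vectors $\vct{c}$ and use continuity of the quadratic form in $\vct{c}$; the paper's proof has the identical issue with its uncountable family of vectors $\vct{u}$, so this is not a gap relative to the paper's level of rigor.
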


\noindent
See Section~\ref{sec:scalar_matrix} for the proof
of Proposition~\ref{prop:BE_equiv}.

Proposition~\ref{prop:BE_equiv} is a very powerful tool, and it is a key
part of our method.
Indeed, it is already known~\cite{bakry2013analysis} that many kinds of Markov processes 
satisfy the scalar Bakry--{\'E}mery criterion~\eqref{Bakry-Emery_criterion_scalar}.
When contemplating novel settings, we only need to check the scalar criterion,
rather than worrying about matrix-valued functions.
In all these cases, we obtain the matrix extension for free.

\begin{remark}[Curvature]\label{rmk:curvature}
The scalar Bakry--\'Emery criterion, Proposition~\ref{prop:BE_equiv}\eqref{Bakry-Emery_criterion_scalar}, is also known as the curvature condition $CD(\rho,\infty)$  with $\rho=c^{-1}$. In the scenario where the infinitesimal generator $\mL$ is the Laplace--Beltrami operator $\Delta_{\mathfrak{g}}$ on a Riemannian manifold $(M,\mathfrak{g})$ with co-metric $\mathfrak{g}$, the Bakry--\'Emery criterion holds if and only if the Ricci curvature tensor is everywhere positive definite, with eigenvalues bounded from below by $\rho>0$.  See~\cite[Section 1.16]{bakry2013analysis} for a discussion.  We will return to this example
in Section~\ref{sec:Riemannin_intro}.
\end{remark}

\subsection{Bakry--{\'E}mery and ergodicity} 

The scalar Bakry--\'Emery criterion, Proposition~\ref{prop:BE_equiv}\eqref{Bakry-Emery_criterion_scalar},
is equivalent to a local Poincar\'e inequality,
which is strictly stronger than the scalar Poincar\'e inequality, Proposition~\ref{prop:poincare_equiv}\eqref{Poincare_inequality_scalar}.
It is also equivalent to a powerful local ergodicity property~\cite[Theorem 2.35]{van550probability}.
The next result states that the matrix Bakry--{\'E}mery criterion~\eqref{Bakry-Emery}
implies counterparts of these facts.

\begin{proposition}[Bakry--{\'Emery}: Consequences] 
\label{prop:local_Poincare}
Let $(P_t)_{t \geq 0}$ be a Markov semigroup acting on
suitable functions $\mtx{f} : \Omega \to \mathbb{H}_d$ for fixed $d \in \N$,
as defined in~\eqref{eqn:semigroup}. 
The following are equivalent:
\begin{enumerate}

\item \label{Bakry-Emery_criterion} \textbf{Bakry--\'Emery criterion.}
$\Gamma(\mtx{f})\preccurlyeq c \cdot \Gamma_2(\mtx{f})$ for all suitable $\mtx{f}:\Omega \to \mathbb{H}_d$.

\item  \label{local_ergodicity}
\textbf{Local ergodicity.}
$\Gamma(P_t\mtx{f})\preccurlyeq \econst^{-2t/c} \cdot P_t\Gamma(\mtx{f})$ for all $t \geq 0$ and for all suitable $\mtx{f}:\Omega \to \mathbb{H}_d$. 
\item \label{local_Poincare}
\textbf{Local Poincar\'e inequality.} $P_t(\mtx{f}^2) - (P_t\mtx{f})^2 \preccurlyeq c \,(1-\econst^{-2t/c}) \cdot P_t\Gamma(\mtx{f})$ for all $t \geq 0$ and for all suitable $\mtx{f}:\Omega \to \mathbb{H}_d$. \end{enumerate}
\end{proposition}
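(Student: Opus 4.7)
The plan is to prove the cyclic implications \eqref{Bakry-Emery_criterion}~$\Rightarrow$~\eqref{local_ergodicity}~$\Rightarrow$~\eqref{local_Poincare}~$\Rightarrow$~\eqref{Bakry-Emery_criterion}, paralleling the classical scalar Bakry--\'Emery argument (e.g.,~\cite[Thm.~2.35]{van550probability}) while tracking the semidefinite order at every step. Two interpolation quantities drive the proof: for fixed $t>0$ and suitable $\mtx{f}:\Omega\to\mathbb{H}_d$, set
\begin{equation*}
\Psi(s) \defeq P_s\Gamma(P_{t-s}\mtx{f})
\qtq{and}
\Phi(s) \defeq P_s\bigl[(P_{t-s}\mtx{f})^2\bigr],
\qquad s \in [0,t].
\end{equation*}
Differentiating in $s$ using~\eqref{eqn:derivative_relation} together with the bilinearity and definitions in~\eqref{eqn:definition_Gamma} and~\eqref{eqn:definition_Gamma2}, direct manipulation yields
\begin{equation*}
\Psi'(s) = 2\,P_s\Gamma_2(P_{t-s}\mtx{f})
\qtq{and}
\Phi'(s) = 2\,P_s\Gamma(P_{t-s}\mtx{f}).
\end{equation*}
Because $P_s$ is a matrix conditional expectation via~\eqref{eqn:semigroup}, it preserves the positive-semidefinite cone pointwise; this is the only extra feature needed to propagate matrix inequalities along the interpolation.

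For \eqref{Bakry-Emery_criterion}~$\Rightarrow$~\eqref{local_ergodicity}, applying the hypothesis to $P_{t-s}\mtx{f}$ yields $\Psi'(s) \succcurlyeq (2/c)\,\Psi(s)$; hence the $s$-derivative of $\econst^{-2s/c}\Psi(s)$ is PSD, and integrating (a PSD-valued integral is PSD) gives $\Psi(t) \succcurlyeq \econst^{2t/c}\,\Psi(0)$, which is exactly~\eqref{local_ergodicity}. For \eqref{local_ergodicity}~$\Rightarrow$~\eqref{local_Poincare}, invoke~\eqref{local_ergodicity} on $\mtx{f}$ at time $t-s$ to get $\Gamma(P_{t-s}\mtx{f}) \preccurlyeq \econst^{-2(t-s)/c}\,P_{t-s}\Gamma(\mtx{f})$; applying $P_s$ (order-preserving) and using $P_sP_{t-s} = P_t$ gives $\Phi'(s) \preccurlyeq 2\econst^{-2(t-s)/c}\,P_t\Gamma(\mtx{f})$. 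Integrating $s$ over $[0,t]$ produces $\Phi(t) - \Phi(0) \preccurlyeq c(1 - \econst^{-2t/c})\,P_t\Gamma(\mtx{f})$, and the identity $\Phi(t) - \Phi(0) = P_t(\mtx{f}^2) - (P_t\mtx{f})^2$ delivers~\eqref{local_Poincare}.

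For the closing implication \eqref{local_Poincare}~$\Rightarrow$~\eqref{Bakry-Emery_criterion}, both sides of~\eqref{local_Poincare} vanish at $t = 0$ and share the first-order Taylor coefficient $2t\Gamma(\mtx{f})$, so the inequality cascades into the $t^2$ term. Expanding $P_t$ as a power series in $\mL$ and symmetrizing carefully (because $\mtx{f}$ and $\mL\mtx{f}$ do not commute in general), one computes
\begin{equation*}
P_t(\mtx{f}^2) - (P_t\mtx{f})^2 = 2t\Gamma(\mtx{f}) + 2t^2\bigl[\mL\Gamma(\mtx{f}) - \Gamma_2(\mtx{f})\bigr] + O(t^3),
\end{equation*}
while $c(1 - \econst^{-2t/c})\,P_t\Gamma(\mtx{f}) = 2t\Gamma(\mtx{f}) + 2t^2\mL\Gamma(\mtx{f}) - (2t^2/c)\Gamma(\mtx{f}) + O(t^3)$. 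Subtracting, dividing by $t^2$, and sending $t \downarrow 0$ leaves $\Gamma_2(\mtx{f}) \succcurlyeq c^{-1}\Gamma(\mtx{f})$, which is~\eqref{Bakry-Emery_criterion}.

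The main obstacle is purely computational: each of the identities for $\Psi'$, $\Phi'$, and the second-order Taylor expansion of $P_t(\mtx{f}^2) - (P_t\mtx{f})^2$ must be derived from the symmetric bilinear definitions~\eqref{eqn:definition_Gamma} and~\eqref{eqn:definition_Gamma2} with careful bookkeeping of the order of matrix products. Once those identities are in hand the remainder is a matrix-valued but otherwise standard monotonicity argument, and the smoothness/domain issues are subsumed into the standing restriction to ``suitable'' functions.
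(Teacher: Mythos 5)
Your proposal is correct and follows essentially the same route as the paper's proof: the same semigroup interpolation functions (yours are just the time-reversed parametrizations of the paper's $P_{t-s}\Gamma(P_s\mtx{f})$ and $P_{t-s}((P_s\mtx{f})^2)$), the same use of the positivity-preserving property of $P_s$ with a matrix differential/monotonicity argument, and the same second-order expansion at $t=0$ for \eqref{local_Poincare}~$\Rightarrow$~\eqref{Bakry-Emery_criterion}, which the paper phrases via the auxiliary function $\mtx{C}(t)$ and its vanishing first derivative. Your Taylor coefficients and the identities $\Psi'(s)=2P_s\Gamma_2(P_{t-s}\mtx{f})$, $\Phi'(s)=2P_s\Gamma(P_{t-s}\mtx{f})$ check out against the definitions~\eqref{eqn:definition_Gamma} and~\eqref{eqn:definition_Gamma2}.
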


\noindent
The proof Proposition~\ref{prop:local_Poincare} appears in Section~\ref{sec:equivalence_local_Poincare}.
It follows along the same lines as the scalar result~\cite[Theorem 2.36]{van550probability}. 

Proposition~\ref{prop:local_Poincare} plays a central role in this paper.
With the aid of Proposition~\ref{prop:BE_equiv}, we can verify the
Bakry--{\'E}mery criterion~\eqref{Bakry-Emery_criterion} for many
particular Markov semigroups.  Meanwhile, the local ergodicity
property~\eqref{local_ergodicity} supports short derivations of
trace moment inequalities for random matrices.

The results in Proposition~\ref{prop:local_Poincare} refine the statements
in Proposition~\ref{prop:matrix_poincare}.
Indeed, the carr\'e du champ operator $\Gamma(\mtx{f})$ measures the local fluctuation of a function $\mtx{f}$,
so the local ergodicity condition~\eqref{local_ergodicity} means that the fluctuation of
$P_t\mtx{f}$ at every point $z\in \Omega$ is decreasing exponentially fast.
By applying $\E_\mu$ to both sides of the local ergodicity inequality,
we obtain the ergodicity of energy, Proposition~\ref{prop:matrix_poincare}\eqref{energy_convergence}.

If $(P_t)_{t\geq 0}$ is ergodic, applying the expectation $\E_\mu$ to the local Poincar\'e inequality~\eqref{local_Poincare} and then taking $t\rightarrow +\infty$
 yields the matrix Poincar\'e inequality, Proposition~\ref{prop:matrix_poincare}\eqref{Poincare_inequality}
 with constant $\alpha = c$.
In fact, a standard method for establishing a Poincar\'e inequality
 is to check the Bakry--\'Emery criterion. 

\begin{remark}[Noncommutative semigroups]
Junge \& Zeng have investigated the implications of
the Bakry--{\'E}mery criterion~\eqref{Bakry-Emery} for noncommutative diffusion processes on a von Neumann algebra. 
For this setting, a partial version of Proposition~\ref{prop:local_Poincare} appears in~\cite[Lemma 4.6]{junge2015noncommutative}.
\end{remark}

\subsection{Basic examples}\label{sec:examples} This section contains some examples of Markov semigroups that satisfy the Bakry--{\'E}mery criterion~\eqref{Bakry-Emery}.  In Section~\ref{sec:main_results}, we will use these semigroups to derive matrix concentration results for several random matrix models.

\subsubsection{Product measures}\label{sec:product_measure_all_intro} Consider a product space $\Omega = \Omega_1\otimes \Omega_2\otimes \cdots\otimes \Omega_n $ equipped with a product measure $\mu = \mu_1\otimes \mu_2\otimes \cdots\otimes\mu_n$.  In Section~\ref{sec:product_measure_all}, we present the standard construction of
the associated Markov semigroup, adapted to the matrix setting.
This semigroup is ergodic and reversible, and its carr\'e du champ operator takes the form
of a discrete squared derivative: 
\begin{equation}\label{eqn:variance_proxy}
\Gamma(\mtx{f})(z) = \mtx{V}(\mtx{f})(z) := \frac{1}{2}\sum_{i=1}^n\E_Z \left[ (\mtx{f}(z) - \mtx{f}((z;Z)_i))^2 \right] \quad \text{for all $z\in \Omega$}.
\end{equation}
In this expression, $Z = (Z^1,\dots,Z^n)\sim\mu$ and $(z;Z)_i = (z^1,\dots,z^{i-1},Z^i,z^{i+1},\dots,z^n)$ for each $i=1,\dots,n$.  Superscripts denote the coordinate index.

Aoun et al.~\cite{ABY20:Matrix-Poincare} have shown that this Markov semigroup satisfies
the matrix Poincar{\'e} inequality~\eqref{eqn:matrix_Poincare} with constant $\alpha = 1$.
In Section~\ref{sec:product_measure_all}, we will show that the semigroup also satisfies the Bakry--\'Emery criterion~\eqref{Bakry-Emery} with constant $c = 2$.

\subsubsection{Log-concave measures}\label{sec:log-concave_intro}
Log-concave distributions~\cite{Pre73:Logarithmic-Concave,ambrosio2009existence,saumard2014log} are a fundamental class of probability measures on $\Omega = \mathbb{R}^n$ that are closely related to diffusion processes.  A log-concave measure takes the form $\diff \mu \propto \econst^{-W(z)}\idiff z$ where the potential $W:\mathbb{R}^n\rightarrow \mathbb{R}$ is a convex function, so it captures a form of negative dependence.
The associated diffusion process naturally induces a semigroup whose
carr{\'e} du champ operator takes
the form of the squared ``magnitude'' of the gradient:
\[\Gamma(\mtx{f})(z) = \sum_{i=1}^n(\partial_i\mtx{f}(z))^2\quad \text{for all $z\in \mathbb{R}^n$}.\] 
As usual, $\partial_i := \partial/\partial z_i$ for $i = 1, \dots, n$.

Many interesting results follow from the condition that the potential $W$
is uniformly strongly convex on $\mathbb{R}^n$.
In other words, for a constant $\eta > 0$,
we assume that the Hessian matrix satisfies
\begin{equation} \label{eqn:hess-sc-intro}
(\operatorname{Hess} W)(z) := \big[ (\partial_{ij} W)(z) \big]_{i,j=1}^n \succcurlyeq \eta \cdot \Id_n
\quad\text{for all $z \in \mathbb{R}^n$.}
\end{equation}
The partial derivative $\partial_{ij} := \partial^2/(\partial z_i \partial z_j)$ for $i,j=1,\dots,n$.
It is a standard result~\cite[Sec. 4.8]{bakry2013analysis} that the strong convexity condition~\eqref{eqn:hess-sc-intro} implies the scalar Bakry--\'Emery criterion with constant $c = \eta^{-1}$.  Therefore, according to Proposition~\ref{prop:BE_equiv},
the matrix Bakry--{\'E}mery criterion~\eqref{Bakry-Emery} is valid for every $d \in \N$.

One of the core examples of a log-concave measure is the standard Gaussian measure on $\mathbb{R}^n$, which is given by the potential $W(z) = z^\trsp z/2$.  The associated diffusion process induces the Ornstein--Uhlenbeck semigroup, which satisfies the Bakry--\'Emery criterion~\eqref{Bakry-Emery} with constant $c = 1$.

A more detailed discussion on log-concave measures is presented in Section~\ref{sec:log-concave}.

\subsection{Measures on Riemannian manifolds} \label{sec:Riemannin_intro}

The theory of diffusion processes on Euclidean spaces can be generalized to the setting of Riemannian manifolds.  Although this exercise may seem abstract, it allows us to treat some interesting and important examples in a unified way.  We refer to~\cite{bakry2013analysis} for more background on this subject, and we instate their conventions.

Consider an $n$-dimensional compact Riemannian manifold $(M,\mathfrak{g})$. Let $\mathfrak{g}(x) = (g^{ij}(x) : 1 \leq i,j \leq n )$ be the matrix representation of the co-metric tensor $\mathfrak{g}$ in local coordinates, which is a symmetric and positive-definite matrix defined for every $x \in M$.
The manifold is equipped with a canonical Riemannian probability measure $\mu_\mathfrak{g}$ that has local density $\diff \mu_\mathfrak{g} \propto \det(\mathfrak{g}(x))^{-1/2} \idiff{x}$ with respect to the Lebesgue measure in local coordinates.  This measure $\mu_\mathfrak{g}$ is the stationary measure of the diffusion process on $M$ whose infinitesimal generator $\mL$ is the Laplace--Beltrami operator $\Delta_\mathfrak{g}$.  This diffusion process is called the \emph{Riemannian Brownian motion}.\footnote{Many authors use the convention that Riemmanian Brownian motion has infinitesimal generator $\tfrac{1}{2} \Delta_{\mathfrak{g}}$.}
The associated matrix carr{\'e} du champ operator coincides with the squared ``magnitude'' of the differential:
\begin{equation}\label{eqn:gamma_Riemannian_0}
\Gamma(\mtx{f})(x) = \sum_{i,j=1}^ng^{ij}(x) \,\partial_i\mtx{f}(x)\, \partial_j\mtx{f}(x)\quad \text{for suitable $\mtx{f} : M \to \mathbb{H}_d$.}
\end{equation} 
Here, $\partial_i$ for $i=1,\dots,n$ are the components of the differential, computed in local coordinates.  We emphasize that the matrix carr{\'e} du champ operator is intrinsic; expressions for the carr{\'e} du champ resulting from different choices of local coordinates are equivalent under change of variables. See Section~\ref{sec:extension_Riemannian_manifold} for a more detailed discussion.

As mentioned in Remark~\ref{rmk:curvature}, the scalar Bakry--\'{E}mery criterion holds with $c=\rho^{-1}$ if and only if the Ricci curvature tensor of $(M, \mathfrak{g})$ is everywhere positive, with eigenvalues bounded from below by $\rho>0$. In other words, for Brownian motion on a manifold, the Bakry--{\'E}mery criterion is equivalent to the uniform positive curvature of the manifold.  Proposition~\ref{prop:BE_equiv} ensures
that the matrix Bakry--{\'E}mery criterion~\eqref{Bakry-Emery} holds with $c = \rho^{-1}$
under precisely the same circumstances.

Many examples of positively curved Riemannian manifolds are discussed in \cite{ledoux2001concentration,gromov2007metric,cheeger2008comparison,bakry2013analysis}.
We highlight two particularly interesting cases.

\begin{example}[Unit sphere]
Consider the $n$-dimensional unit sphere $\mathbb{S}^{n} \subset \R^{n+1}$ for $n \geq 2$.
The sphere is equipped with the Riemannian manifold structure induced by
$\R^{n+1}$.  The canonical Riemannian measure on the
sphere is simply the uniform probability measure.
The sphere has a constant Ricci curvature tensor, whose eigenvalues all equal $n - 1$.
Therefore, the Brownian motion on $\mathbb{S}^n$ satisfies
the Bakry--{\'E}mery criterion~\eqref{Bakry-Emery}
with $c = (n-1)^{-1}$.
See~\cite[Sec.~2.2]{bakry2013analysis}.
\end{example}

\begin{example}[Special orthogonal group]
The special orthogonal group $\mathrm{SO}(n)$ can be regarded as a Riemannian submanifold
of $\R^{n \times n}$.  The Riemannian metric is the Haar probability measure
on $\mathrm{SO}(n)$.  It is known that the eigenvalues of the Ricci curvature tensor
are uniformly bounded below by $(n-1)/4$.  Therefore, the Brownian motion on $\mathrm{SO}(n)$ satisfies the Bakry--{\'E}mery criterion~\eqref{Bakry-Emery} with $c = 4/(n-1)$.  
See~\cite[pp.~26ff]{ledoux2001concentration}.
\end{example}

The lower bound on Ricci curvature is stable under (Riemannian) products of manifolds, so similar results are valid for products of spheres or products of the orthogonal group; cf.~\cite[p.~27]{ledoux2001concentration}.

\subsection{History}

In the scalar setting, much of the classic research on Markov processes concerns the behavior of diffusion processes on Riemannian manifolds.  Functional inequalities connect the convergence of these Markov processes to the geometry of the manifold.
The rate of convergence to equilibrium of a Markov process plays a core role in developing concentration properties
for the measure.  The treatise \cite{bakry2013analysis} contains a comprehensive discussion.  Other references include \cite{ledoux2001concentration,boucheron2013concentration,van550probability}.

Matrix-valued Markov processes were originally introduced to model the evolution of quantum systems \cite{davies1969quantum,lindblad1976generators,accardi1982quantum}. In recent years,
the long-term behavior of quantum Markov processes has received significant attention in the field of quantum information.
A general approach to exponential convergence of a quantum system
is to establish quantum log-Sobolev inequalities for density operators \cite{majewski1998dissipative,olkiewicz1999hypercontractivity,kastoryano2013quantum}. 

In this paper, we consider a mixed classical-quantum setting,
where a classical Markov process drives a matrix-valued function.
The papers~\cite{cheng2017exponential,cheng2019matrix,ABY20:Matrix-Poincare}
contain some foundational results for this model.
Our work provides a more detailed understanding
of the connections between the ergodicity of the
semigroup and matrix functional inequalities.
The companion paper~\cite{HT20:Trace-Poincare}
contains further results on trace Poincar{\'e} inequalities,
which are equivalent to the Poincar{\'e} inequality~\eqref{eqn:matrix_Poincare}.

A general framework for noncommutative diffusion processes
on von Neumann algebras can be found in \cite{junge2006h,junge2015noncommutative}.
In particular, the paper~\cite{junge2015noncommutative} shows that a
noncommutative Bakry--{\'E}mery criterion implies local ergodicity
of a noncommutative diffusion process. 

In spite of its generality, the presentation in~\cite{junge2015noncommutative}
does not fully contain our treatment.  On the one hand,
the noncommutative semigroup model includes the
mixed classical-quantum model~\eqref{eqn:semigroup} as a special case.
On the other hand, we do not need the underlying Markov process to
be a diffusion (with continuous sample paths), while Junge \& Zeng
pose a diffusion assumption.

\section{Nonlinear Matrix Concentration: Main Results}
\label{sec:main_results}

The matrix Poincar{\'e} inequality~\eqref{eqn:matrix_Poincare} has been associated
with subexponential concentration inequalities for random matrices~\cite{ABY20:Matrix-Poincare,HT20:Trace-Poincare}.
The central purpose of this paper is to establish that the (scalar) Bakry--{\'E}mery criterion
leads to matrix concentration inequalities via a straightforward semigroup method.
This section outlines our main results; the proofs appear in Section~\ref{sec:trace_to_moment}.

\begin{remark}[Noncommutative setting]
After this paper was written, we learned that Junge \& Zeng~\cite{junge2015noncommutative}
have used the (noncommutative) Bakry--{\'E}mery criterion to obtain subgaussian moment
bounds for elements of von Neumann algebra using a martingale approach.  Their setting
is more general (if we ignore the diffusion assumptions),
but we will see that their results are weaker in several respects.
\end{remark}

\subsection{Markov processes and random matrices}

Let $Z$ be a random variable, taking values in the state space $\Omega$, with the distribution $\mu$.
For a matrix-valued function $\mtx{f} : \Omega \to \mathbb{H}_d$, we can
define the random matrix $\mtx{f}(Z)$, whose distribution is the push-forward
of $\mu$ by the function $\mtx{f}$.  Our goal is to understand how well
the random matrix $\mtx{f}(Z)$ concentrates around its expectation
$\Expect \mtx{f}(Z) = \Expect_{\mu} \mtx{f}$.

To do so, suppose that we can construct a reversible, ergodic Markov process
$(Z_t)_{t \geq 0} \subset \Omega$ whose stationary distribution is $\mu$.
We have the intuition that the faster that the process $(Z_t)_{t \geq 0}$ converges
to equilibrium, the more sharply the random matrix $\mtx{f}(Z)$ concentrates
around its expectation.

To quantify the rate of convergence of the matrix Markov process,
we use the Bakry--{\'E}mery criterion~\eqref{Bakry-Emery}
to obtain local ergodicity of the semigroup.  This property allows
us to prove strong bounds on the trace moments of the random matrix.
Using standard arguments (Appendix~\ref{apdx:matrix_moments}), these moment bounds imply
nonlinear matrix concentration inequalities.

\subsection{Polynomial concentration}

We begin with a general estimate on the polynomial trace moments
of a random matrix under a Bakry--\'Emery criterion. 

\begin{theorem}[Polynomial moments]\label{thm:polynomial_moment} Let $\Omega$ be a Polish space equipped with a probability measure $\mu$.  Consider a reversible, ergodic Markov semigroup~\eqref{eqn:semigroup} with stationary measure $\mu$ that acts on (suitable) functions $\mtx{f} : \Omega \to \mathbb{H}_d$.
Assume that the Bakry--\'Emery criterion \eqref{Bakry-Emery} holds for a constant $c>0$.
Then, for $q=1$ and $q\geq1.5$, 
\begin{equation}\label{eqn:polynomial_moment_1}
\left[ \E_\mu \trace|\mtx{f}-\E_\mu\mtx{f}|^{2q}\right]^{1/(2q)}\leq \sqrt{c\,(2q-1)}\left[ \E_\mu\trace\Gamma(\mtx{f})^q\right]^{1/(2q)}.
\end{equation}
If the variance proxy $v_{\mtx{f}} := \norm{ \|\Gamma(\mtx{f})\| }_{L_{\infty}(\mu)} <+\infty$,
then
\begin{equation}\label{eqn:polynomial_moment_2}
\left[ \E_\mu \trace|\mtx{f}-\E_\mu\mtx{f}|^{2q}\right]^{1/(2q)}\leq d^{1/(2q)}\sqrt{c\,(2q-1) \,\smash{v_{\mtx{f}}}} .
\end{equation}
\end{theorem}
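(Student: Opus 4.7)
The estimate~\eqref{eqn:polynomial_moment_2} is immediate from~\eqref{eqn:polynomial_moment_1}, because $\trace \Gamma(\mtx{f})^q \leq d\,\|\Gamma(\mtx{f})\|^q \leq d\, v_{\mtx{f}}^q$ pointwise, and the remaining task is~\eqref{eqn:polynomial_moment_1}. Since constants lie in the kernel of $\mL$ (hence of $\Gamma$), the map $\mtx{f} \mapsto \mtx{f} - \E_\mu \mtx{f}$ leaves the right-hand side unchanged, so I may assume $\E_\mu \mtx{f} = \mtx{0}$. The case $q = 1$ is then nothing but the trace of the matrix Poincaré inequality: apply $\E_\mu$ to Proposition~\ref{prop:local_Poincare}\eqref{local_Poincare}, use stationarity to eliminate the outer $P_t$, and let $t \to \infty$ to get $\E_\mu \mtx{f}^2 - (\E_\mu \mtx{f})^2 \preccurlyeq c\, \E_\mu \Gamma(\mtx{f})$. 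Taking traces yields the bound with $q=1$, $2q - 1 = 1$. So the remaining task is to prove the result for $q \geq 3/2$.

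\textbf{Semigroup interpolation.} Introduce the functional $M(t) := \E_\mu \trace (P_t\mtx{f})^{2q}$. By ergodicity (Proposition~\ref{prop:matrix_poincare}) and the mean-zero normalization, $M(t) \to 0$ as $t \to \infty$, so the fundamental theorem of calculus gives
$$M(0) = -\int_0^\infty M'(t)\, dt.$$
Differentiating with $\partial_t P_t\mtx{f} = \mL P_t\mtx{f}$ together with the chain rule $\tfrac{d}{dt}\trace \mtx{g}(t)^{2q} = 2q\,\trace[\mtx{g}(t)^{2q-1}\mtx{g}'(t)]$ (valid for any Hermitian-valued curve), and invoking reversibility through the Dirichlet identity~\eqref{eqn:Dirichlet_expression_2}, yields
$$-M'(t) = 2q\, \E_\mu \trace\, \Gamma\bigl((P_t\mtx{f})^{2q-1},\, P_t\mtx{f}\bigr).$$

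\textbf{Key trace estimate and closure.} The crux of the proof is a matrix trace inequality of the form
$$\trace\, \Gamma\bigl(\mtx{g}^{2q-1},\, \mtx{g}\bigr) \leq (2q-1)\, \trace\bigl[|\mtx{g}|^{q-1}\, \Gamma(\mtx{g})\, |\mtx{g}|^{q-1}\bigr] \quad \text{for Hermitian } \mtx{g},$$
which in the scalar diffusion setting is just the chain rule with equality, and in the general matrix Markov setting is established by an operator-convexity argument of Lieb--Araki--Thirring flavor applied to the function $x \mapsto |x|^{2q-2}x$. With this estimate applied at $\mtx{g} = P_t\mtx{f}$, combined with local ergodicity $\Gamma(P_t\mtx{f}) \preccurlyeq \econst^{-2t/c}\, P_t\Gamma(\mtx{f})$ (Proposition~\ref{prop:local_Poincare}\eqref{local_ergodicity}), trace Hölder with conjugate exponents $q/(q-1)$ and $q$, and the semigroup Jensen inequality~\eqref{eqn:semigroup_Jensen_2} (which supplies $\E_\mu \trace (P_t\Gamma(\mtx{f}))^q \leq \E_\mu\trace\Gamma(\mtx{f})^q$ and $\E_\mu\trace(P_t\mtx{f})^{2q} \leq M(0)$), I arrive at
$$-M'(t) \leq \kappa\, \econst^{-2t/c}\, M(0)^{1-1/q}\, [\E_\mu \trace \Gamma(\mtx{f})^q]^{1/q}$$
for an explicit constant $\kappa$ depending on $q$. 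Integrating over $t \in [0, \infty)$ (the exponential contributes $c/2$) produces a closed inequality in $M(0)$, and solving for $M(0)^{1/(2q)}$ delivers~\eqref{eqn:polynomial_moment_1}.

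\textbf{Main obstacle.} The essential technical input is the noncommutative trace bound of the third paragraph. In the scalar or diffusion case it is the trivial chain rule $\Gamma(g^{2q-1}, g) = (2q-1)g^{2q-2}\Gamma(g)$, but in a general matrix Markov setting the noncommutativity between $\mtx{g}^k$ and $\Gamma(\mtx{g})$ obstructs a direct chain rule, and one must instead invoke an operator-convexity bound for $x \mapsto |x|^{2q-2}x$. The two permitted ranges $q = 1$ and $q \geq 3/2$ are precisely those in which this operator-convexity argument produces the sharp constant $\sqrt{c(2q-1)}$ appearing in~\eqref{eqn:polynomial_moment_1}; extending beyond that range would require a different tool, which is presumably why the gap $1 < q < 3/2$ is excluded.
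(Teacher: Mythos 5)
Your interpolation differs from the paper's: you evolve the whole functional $M(t)=\E_\mu\trace\abs{P_t\mtx{f}}^{2q}$, whereas the paper freezes the test function $\varphi(\mtx{f})$ (with $\varphi(x)=\sgn(x)\abs{x}^{2q-1}$) and evolves only one copy, writing $\E_\mu\trace[\mtx{f}\,\varphi(\mtx{f})]=-\int_0^\infty\E_\mu\trace[\mL(P_t\mtx{f})\,\varphi(\mtx{f})]\idiff{t}$. Your route can be made to work, but the closure you describe loses a factor $\sqrt{q}$ and therefore does not prove \eqref{eqn:polynomial_moment_1}. Track the constants: differentiating the $2q$-th trace power brings down a factor $2q$, your chain-rule estimate contributes $(2q-1)$, and local ergodicity contributes $\econst^{-2t/c}$, so after the H{\"o}lder and Jensen steps one has $-M'(t)\leq 2q(2q-1)\,\econst^{-2t/c}\,[\E_\mu\trace\Gamma(\mtx{f})^q]^{1/q}\,M(t)^{(q-1)/q}$. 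If you now replace $M(t)$ by $M(0)$ (as you propose via Jensen) and integrate, you obtain $M(0)^{1/q}\leq c\,q\,(2q-1)[\E_\mu\trace\Gamma(\mtx{f})^q]^{1/q}$, i.e.\ the final constant $\sqrt{c\,q\,(2q-1)}$ rather than $\sqrt{c\,(2q-1)}$ --- exactly the $O(q)$ moment growth that the theorem is designed to beat. The repair is to keep $M(t)$ on the right-hand side and integrate the resulting differential inequality for $M(t)^{1/q}$ (using $M(t)\to 0$ by ergodicity), which restores $M(0)^{1/q}\leq c(2q-1)[\E_\mu\trace\Gamma(\mtx{f})^q]^{1/q}$; the paper sidesteps the issue entirely because its frozen test function produces no factor $2q$ and only one argument carries $P_t$, so Cauchy--Schwarz yields the decay $\econst^{-t/c}$ and the integral contributes $c$ directly.

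The second, more substantive gap is the "key trace estimate," which you assert rather than prove, and which you state in a pointwise form that is not justified for general (non-diffusion) semigroups: the natural pointwise bound obtained from a mean-value argument carries the symmetrized weight $\tfrac12\big(\psi(\mtx{g}(Z_t))+\psi(\mtx{g}(Z_0))\big)$ with $\psi(x)=(2q-1)\abs{x}^{2q-2}$, and discarding the $Z_t$ term requires averaging against the stationary measure together with reversibility. This is precisely the content of the paper's chain rule inequality (Lemma~\ref{lem:key_Gamma}, applied with both arguments equal to $P_t\mtx{f}$), whose proof combines the mean value trace inequality (Lemma~\ref{lem:mean_value_inequality}, a tensorization plus operator Cauchy--Schwarz argument) with the triple-product identity (Lemma~\ref{lem:three_limit}), the latter using reversibility to kill the symmetrization error; an appeal to "Lieb--Araki--Thirring-flavor operator convexity" is not a substitute, and the true source of the restriction $q=1$ or $q\geq 1.5$ is the convexity of $\psi(x)=(2q-1)\abs{x}^{2q-2}$, not operator convexity of $x\mapsto\abs{x}^{2q-2}x$. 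Since you only invoke the estimate after taking $\E_\mu$, the integrated form would suffice --- but it is the crux of the theorem and stands unproven in your sketch. (Minor points: for non-integer $2q$ the derivative should be written with $\sgn(\mtx{g})\abs{\mtx{g}}^{2q-1}$; your reduction of \eqref{eqn:polynomial_moment_2} from \eqref{eqn:polynomial_moment_1} and your $q=1$ case via the Poincar{\'e} inequality are fine.)
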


\noindent
We establish this theorem in Section~\ref{sec:trace_to_moment}.

For noncommutative diffusion semigroups,
Junge \& Zeng~\cite{junge2015noncommutative} have developed polynomial moment bounds
similar to Theorem~\ref{thm:polynomial_moment}, but they only obtain moment growth
of $O(q)$ in the inequality~\eqref{eqn:polynomial_moment_1}.  We can trace this
discrepancy to the fact that they use a martingale argument based on the
noncommutative Burkholder--Davis--Gundy inequality.  At present, our proof only
applies to the mixed classical-quantum semigroup~\eqref{eqn:semigroup}, but it
seems plausible that our approach can be generalized.

For now, let us present some concrete results that follow when we apply Theorem~\ref{thm:polynomial_moment}
to the semigroups discussed in Section~\ref{sec:examples}.  In each of these cases,
we can derive bounds for the expectation and tails of $\norm{ \smash{\mtx{f} - \E_{\mu} \mtx{f}} }$
using the matrix Chebyshev inequality (Proposition~\ref{prop:matrix_Chebyshev}).
In particular, when $v_{\mtx{f}} < + \infty$, we obtain subgaussian concentration.

\subsubsection{Polynomial Efron--Stein inequality for product measures} 

The first consequence of Theorem~\ref{thm:polynomial_moment} is a polynomial moment inequality for product measures.
This result exactly reproduces the matrix polynomial Efron--Stein inequalities established by Paulin et al.~\cite[Theorem 4.2]{paulin2016efron}.

\begin{corollary}[Product measure: Polynomial moments]\label{cor:product_measure_Efron--Stein} Let $\mu = \mu_1\otimes \mu_2\otimes \cdots\otimes\mu_n$ be a product measure on a product space $\Omega = \Omega_1\otimes \Omega_2\otimes \cdots\otimes \Omega_n $.  Let $\mtx{f}:\Omega \rightarrow \mathbb{H}_d$ be a suitable function.
Then, for $q= 1$ and $q\geq1.5$,
\begin{equation}\label{eqn:product_measure_Efron--Stein}
\left[ \E_\mu \trace|\mtx{f}-\E_\mu\mtx{f}|^{2q}\right]^{1/(2q)}\leq \sqrt{2(2q-1)}\left[\E_\mu\trace\mtx{V}(\mtx{f})^q\right]^{1/(2q)}.
\end{equation}
The matrix variance proxy $\mtx{V}(\mtx{f})$ is defined in \eqref{eqn:variance_proxy}. 
\end{corollary}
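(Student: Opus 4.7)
The plan is to deduce the corollary as a direct specialization of Theorem~\ref{thm:polynomial_moment}, with the Bakry--\'Emery constant and the carr\'e du champ operator read off from the product-measure semigroup.

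First, I would invoke the explicit Markov semigroup associated with the product measure $\mu = \mu_1 \otimes \cdots \otimes \mu_n$, namely the standard coordinate-resampling construction outlined in Section~\ref{sec:product_measure_all_intro}. The pertinent facts are that this semigroup has stationary distribution $\mu$, is reversible and ergodic, and has matrix carr\'e du champ operator given by the discrete squared derivative $\mtx{V}(\mtx{f})$ of~\eqref{eqn:variance_proxy}. All three assertions are consequences of the tensor-product structure of $\mu$ and require no essentially new work at this point.

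The substantive input is the Bakry--\'Emery criterion~\eqref{Bakry-Emery} with constant $c = 2$ for this semigroup. By Proposition~\ref{prop:BE_equiv}, it is enough to check the scalar criterion ($d = 1$); the matrix version is then free for every $d \in \N$. The scalar check amounts to an explicit computation: write $\mL$ as a sum of independent single-coordinate resampling operators, expand $\Gamma_2(f)$ using~\eqref{eqn:definition_Gamma2}, and use independence of the coordinates together with Jensen's inequality applied coordinatewise to obtain the pointwise bound $\Gamma(f) \leq 2\, \Gamma_2(f)$. This constant-tracking step is the main technical item and is what pins down the factor $\sqrt{2(2q-1)}$ in the final estimate; the constant $c = 2$ (rather than $c = 1$) reflects the symmetrization implicit in the definition~\eqref{eqn:variance_proxy}.

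With $c = 2$ and $\Gamma(\mtx{f}) = \mtx{V}(\mtx{f})$ established, inequality~\eqref{eqn:product_measure_Efron--Stein} follows immediately from~\eqref{eqn:polynomial_moment_1} by substitution, and the admissible range $q = 1$ and $q \geq 1.5$ is inherited verbatim from Theorem~\ref{thm:polynomial_moment}.
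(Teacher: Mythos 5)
Your proposal is correct and follows the paper's own route: construct the coordinate-resampling semigroup for $\mu$, verify the Bakry--\'Emery criterion~\eqref{Bakry-Emery} with $c=2$ (Theorem~\ref{thm:product_measure_localPoincare}), and substitute $\Gamma(\mtx{f})=\mtx{V}(\mtx{f})$ into~\eqref{eqn:polynomial_moment_1}. The only slight discrepancy is in your sketch of the scalar check: the paper does not need Jensen's inequality there, since the explicit formula for $\Gamma_2$ in Lemma~\ref{lem:tensor_Gamma} contains $\tfrac12\Gamma$ plus manifestly nonnegative square terms, so the bound $\Gamma(f)\leq 2\,\Gamma_2(f)$ follows by simply discarding those terms.
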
 

\noindent
The details appear in Section~\ref{sec:concentration_results_product}.

\subsubsection{Log-concave measures}

The second result is a new polynomial moment inequality for matrix-valued
functions of a log-concave measure.
To avoid domain issues, we restrict our attention to the Sobolev space
\begin{equation}\label{def:H2_function}
\mathrm{H}_{2,\mu}(\mathbb{R}^n;\mathbb{H}_d) := \left\{\mtx{f} : \mathbb{R}^n\rightarrow\mathbb{H}_d:\E_\mu \|\mtx{f}\|_\mathrm{HS}^2+\sum_{i=1}^n\E_\mu \|\partial_i\mtx{f}\|_\mathrm{HS}^2 + \sum_{i,j=1}^n\E_\mu \|\partial_{ij}\mtx{f}\|_\mathrm{HS}^2 <\infty\right\}.
\end{equation}
For these functions, we have the following matrix concentration inequality.

\begin{corollary}[Log-concave measure: Polynomial moments]\label{cor:log-concave_polynomial_inequality}
Let $\diff \mu \propto \econst^{-W(z)}\idiff z$ be a log-concave measure on $\mathbb{R}^n$
whose potential $W:\mathbb{R}^n\rightarrow\mathbb{R}$ satisfies a uniform strong convexity
condition: $\operatorname{Hess} W \psdge \eta \cdot \Id_n$
with constant $\eta > 0$.
Let $\mtx{f}\in \mathrm{H}_{2,\mu}(\mathbb{R}^n;\mathbb{H}_d)$.
Then, for $q=1$ and $q\geq 1.5$, 
\begin{equation*}\label{eqn:log-concave_polynomial_inequality}
\left[\E_\mu \trace|\mtx{f}-\E_\mu\mtx{f}|^{2q}\right]^{1/(2q)}\leq \sqrt{\frac{2q-1}{\eta}}\left[\E_\mu\trace\left(\sum_{i=1}^n(\partial_i\mtx{f})^2\right)^q\right]^{1/(2q)}.
\end{equation*}
\end{corollary}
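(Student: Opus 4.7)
The plan is to deduce this corollary as a direct specialization of the abstract polynomial moment bound Theorem~\ref{thm:polynomial_moment} to the diffusion semigroup associated with the log-concave measure $\mu$ on $\mathbb{R}^n$. Essentially all the ingredients are in place; the task is one of assembly plus an approximation argument to extend the inequality from smooth compactly supported functions to the Sobolev class $\mathrm{H}_{2,\mu}(\mathbb{R}^n;\mathbb{H}_d)$.

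First I would recall the setup from Section~\ref{sec:log-concave_intro}: the log-concave measure $\mu$ with potential $W$ is the stationary distribution of the (overdamped) Langevin diffusion driven by $\diff Z_t = -\nabla W(Z_t)\idiff t + \sqrt{2}\idiff B_t$, and the associated Markov semigroup~\eqref{eqn:semigroup} acting on matrix-valued functions is reversible and ergodic with matrix carré du champ $\Gamma(\mtx{f})(z) = \sum_{i=1}^n(\partial_i\mtx{f}(z))^2$. Under the strong-convexity hypothesis $\operatorname{Hess} W \succcurlyeq \eta \cdot \Id_n$, the classical scalar Bakry--\'Emery criterion holds with constant $c = \eta^{-1}$ (see~\cite[Sec.~4.8]{bakry2013analysis}), and Proposition~\ref{prop:BE_equiv} promotes this to the matrix Bakry--\'Emery criterion~\eqref{Bakry-Emery} with the same constant $c = \eta^{-1}$ for every $d \in \N$.

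Next I would invoke Theorem~\ref{thm:polynomial_moment} with $c = \eta^{-1}$ and $\Gamma(\mtx{f}) = \sum_{i=1}^n(\partial_i\mtx{f})^2$. Substituting directly into~\eqref{eqn:polynomial_moment_1} yields, for $q=1$ and $q\geq 1.5$,
$$
\left[ \E_\mu \trace|\mtx{f}-\E_\mu\mtx{f}|^{2q}\right]^{1/(2q)} \leq \sqrt{\tfrac{2q-1}{\eta}}\left[ \E_\mu\trace\left(\sum_{i=1}^n(\partial_i\mtx{f})^2\right)^q\right]^{1/(2q)},
$$
which is precisely the claimed inequality. This step is a one-line substitution once the Bakry--\'Emery criterion has been established.

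The one non-routine point, and the main obstacle, is the regularity issue: Theorem~\ref{thm:polynomial_moment} is stated for ``suitable'' functions, which in the conventions of Section~\ref{sec:matrix_Markov_semigroups} means an unspecified algebra of smooth test functions (say, smooth with compact support, or rapidly decaying) on which the semigroup, generator, and carré du champ are all well-defined. The corollary however asserts the bound for every $\mtx{f}$ in the Sobolev space $\mathrm{H}_{2,\mu}(\mathbb{R}^n;\mathbb{H}_d)$ defined in~\eqref{def:H2_function}. To close this gap I would follow the standard approximation scheme: entrywise mollification and truncation produce a sequence $\mtx{f}_k$ of smooth matrix-valued functions with compact support such that $\mtx{f}_k \to \mtx{f}$ together with their first and second partial derivatives in $L_2(\mu)$; Theorem~\ref{thm:polynomial_moment} applies to each $\mtx{f}_k$, and passing to the limit via Fatou's lemma on the left-hand side and the $L_2(\mu)$-convergence of $\sum_i (\partial_i\mtx{f}_k)^2$ on the right-hand side yields the inequality for the limit. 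The structure of this density argument is entirely standard in the Bakry--\'Emery calculus (see~\cite[Sec.~3.2]{bakry2013analysis}), so I would sketch it briefly rather than carry out the mollifier estimates in detail.
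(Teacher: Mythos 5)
Your proposal is correct and follows essentially the same route as the paper: verify the matrix Bakry--\'Emery criterion with $c=\eta^{-1}$ for the Langevin semigroup of $\mu$ (the paper does this via its own computation of $\Gamma$ and $\Gamma_2$ in Lemma~\ref{lem:log-concave_Gamma} and Fact~\ref{fact:log-concave_localPoincare}, while you cite the classical scalar criterion plus Proposition~\ref{prop:BE_equiv}, which is the same mechanism), and then specialize Theorem~\ref{thm:polynomial_moment} with $\Gamma(\mtx{f})=\sum_i(\partial_i\mtx{f})^2$. The paper sidesteps your approximation step by simply taking $\mathrm{H}_{2,\mu}(\mathbb{R}^n;\mathbb{H}_d)$ as the class of suitable functions, so your density sketch is an optional extra rather than a divergence.
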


\noindent
The details appear in Section~\ref{sec:concentration_results_log-concave}.

\subsection{Exponential concentration}

As a consequence of the Bakry--{\'E}mery criterion~\eqref{Bakry-Emery},
we can also derive exponential matrix concentration inequalities.
In principle, polynomial moment inequalities are stronger,
but the exponential inequalities often lead to better constants and more
detailed information about tail decay.

\begin{theorem}[Exponential concentration]\label{thm:exponential_concentration}
Let $\Omega$ be a Polish space equipped with a probability measure $\mu$.
Consider a reversible, ergodic Markov semigroup \eqref{eqn:semigroup} with stationary measure $\mu$
that acts on (suitable) functions $\mtx{f} : \Omega \to \mathbb{H}_d$.
Assume that the Bakry--\'Emery criterion \eqref{Bakry-Emery}
holds for a constant $c>0$.
Then 
\begin{align}\label{eqn:tail_bound_1}
\mathbb{P}_{\mu}\left\{\lambda_{\max}(\mtx{f}-\E_\mu\mtx{f})\geq t \right\} \leq&\ d\cdot \inf_{\beta>0} \exp \left(\frac{-t^2}{2cr_{\mtx{f}}(\beta) + 2t\sqrt{c/\beta} }\right) \quad\text{for all $t \geq 0$.} 
\end{align}
The function $r_{\mtx{f}}$ computes an exponential mean of the carr{\'e} du champ:
\[
r_{\mtx{f}}(\beta):=\frac{1}{\beta}\log \E_\mu\ntr \econst^{ \beta\Gamma(\mtx{f}) }
\quad\text{for $\beta > 0$.}
\]
In addition, suppose that the variance proxy $v_{\mtx{f}} := \norm{ \|\Gamma(\mtx{f}) \| }_{L_{\infty}(\mu)} <+\infty$.
Then 
\begin{equation*}\label{eqn:tail_bound_2}
\mathbb{P}_{\mu}\left\{\lambda_{\max}(\mtx{f}-\E_\mu\mtx{f})\geq t \right\} \leq d\cdot \exp \left(\frac{-t^2}{2cv_{\mtx{f}}}\right)
\quad\text{for all $t \geq 0$.}
\end{equation*}
Furthermore,
\begin{equation*}\label{eqn:expectation_bound}
\E_\mu\lambda_{\max}(\mtx{f}-\E_\mu\mtx{f}) \leq \sqrt{2cv_{\mtx{f}}\log d}.
\end{equation*}
Parallel inequalities hold for the minimum eigenvalue $\lambda_{\min}$.
\end{theorem}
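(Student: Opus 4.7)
All three assertions---the Bernstein--Bennett tail~\eqref{eqn:tail_bound_1}, the sharp subgaussian tail when $v_{\mtx{f}}<\infty$, and the expected maximum bound---will follow from a single estimate on the trace moment-generating function $\Expect_\mu \ntr \econst^{\theta (\mtx{f}-\Expect_\mu \mtx{f})}$, combined with the matrix Laplace transform method. The $\lambda_{\min}$ statements follow by applying the argument to $-\mtx{f}$.

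\textbf{Step 1: reduction to an MGF bound.} The matrix Chernoff--Markov bound (Appendix~\ref{apdx:matrix_moments}) yields
\[
\mathbb{P}_\mu\{\lambda_{\max}(\mtx{f}-\Expect_\mu \mtx{f})\geq t\} \;\leq\; d \inf_{\theta>0} \econst^{-\theta t}\, \Expect_\mu \ntr \econst^{\theta (\mtx{f}-\Expect_\mu \mtx{f})},
\]
so it suffices to prove, for each $\beta>0$, the sub-gamma estimate
\[
\Expect_\mu \ntr \econst^{\theta (\mtx{f}-\Expect_\mu \mtx{f})} \;\leq\; \exp\!\Big(\tfrac{c\, r_{\mtx{f}}(\beta)\,\theta^{2}}{2\,(1-\theta\sqrt{c/\beta})}\Big),\qquad 0\leq\theta<\sqrt{\beta/c}.
\]
Substituting into the Chernoff bound and carrying out the elementary minimization in $\theta$ recovers exactly the exponent in~\eqref{eqn:tail_bound_1}. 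Letting $\beta\to\infty$ when $v_{\mtx{f}}<\infty$ drives $r_{\mtx{f}}(\beta)\to v_{\mtx{f}}$ and kills the linear correction, giving the Gaussian tail $d\exp(-t^{2}/(2cv_{\mtx{f}}))$. The expectation bound then follows from the Gaussian MGF estimate via $\theta\,\Expect_\mu\lambda_{\max}(\mtx{X})\leq\log\Expect_\mu\trace\econst^{\theta\mtx{X}}$ (concavity of $\log$) and optimizing $\theta=\sqrt{2(\log d)/(cv_{\mtx{f}})}$.

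\textbf{Step 2: semigroup derivation of the MGF bound.} Write $\mtx{u}_s := P_s\mtx{f}-\Expect_\mu\mtx{f}$ and $M(s) := \Expect_\mu \ntr \econst^{\theta \mtx{u}_s}$. Ergodicity forces $M(s)\to 1$ as $s\to\infty$, while $M(0)$ is the MGF to be bounded. Differentiation via the Duhamel identity $\tfrac{d}{ds}\trace\econst^{\mtx{A}(s)}=\trace[\dot{\mtx{A}}(s)\econst^{\mtx{A}(s)}]$ (which uses cyclicity of trace to tame the non-commutativity), the semigroup identity $\partial_s P_s\mtx{f}=\mL P_s\mtx{f}$, and reversibility in the form~\eqref{eqn:reversibility_2} produces
\[
-M'(s) \;=\; \theta\,\Expect_\mu \ntr \Gamma\bigl(P_s \mtx{f},\; \econst^{\theta \mtx{u}_s}\bigr).
\]
A matrix trace chain-rule inequality of the shape $\ntr\Gamma(\mtx{u},\econst^{\theta\mtx{u}})\leq \theta\,\ntr[\econst^{\theta\mtx{u}}\Gamma(\mtx{u})]$ (the surrogate for the scalar diffusion identity $\Gamma(u,\econst^{\theta u})=\theta\econst^{\theta u}\Gamma(u)$), followed by the local ergodicity estimate $\Gamma(P_s\mtx{f})\preccurlyeq \econst^{-2s/c}P_s\Gamma(\mtx{f})$ from Proposition~\ref{prop:local_Poincare}\eqref{local_ergodicity}, gives
\[
-M'(s) \;\leq\; \theta^{2}\econst^{-2s/c}\,\Expect_\mu \ntr\!\bigl[\econst^{\theta \mtx{u}_s}\,P_s\Gamma(\mtx{f})\bigr].
\]
A trace H\"older / Peierls--Bogoliubov step splits this mixed trace into a factor $M(s)^{1-\lambda}$ (for some small $\lambda$ controlled by $\theta\sqrt{c/\beta}$) and an exponential mean of $P_s\Gamma(\mtx{f})$, which stationarity $\Expect_\mu P_s\varphi=\Expect_\mu\varphi$ and Jensen applied to $x\mapsto\econst^{\beta x}$ collapse to $\econst^{\beta r_{\mtx{f}}(\beta)}$. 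Integrating the resulting differential inequality for $\log M(s)$ across $s\in[0,\infty)$, using $\int_0^\infty\econst^{-2s/c}\,ds=c/2$, yields the advertised sub-gamma MGF bound.

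\textbf{Main obstacle.} The technical heart of the argument is the trace chain-rule step for $\Gamma(\mtx{u},\econst^{\theta\mtx{u}})$ in Step~2. In the scalar diffusion setting, $\Gamma(u,\econst^{\theta u})=\theta\econst^{\theta u}\Gamma(u)$ is immediate; in the matrix, non-diffusion setting neither a pointwise identity nor even a pointwise semidefinite inequality is available, so one must exploit cyclicity of the trace together with an integral representation (Duhamel) of $\econst^{\theta\mtx{u}}$ to rearrange the non-commuting factors and absorb the cross-terms into a positive-semidefinite form. The second delicate point is calibrating the H\"older / Peierls--Bogoliubov split so that the sharp exponential mean $r_{\mtx{f}}(\beta)$ emerges (yielding the Bernstein-type exponent), rather than a cruder surrogate such as $\|\Gamma(\mtx{f})\|_{L_{p}(\mu)}$.
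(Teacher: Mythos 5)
Your Step 1 is exactly the paper's route: the sub-gamma trace-mgf estimate you postulate is Theorem~\ref{thm:exponential_moment}, and the passage to the tail and expectation bounds (including the weakening $1-c\theta^2/\beta \geq 1-\theta\sqrt{c/\beta}$, the optimization in $\theta$, and the limit $\beta\to\infty$ when $v_{\mtx{f}}<\infty$) is Proposition~\ref{prop:matrix_exponential_concentration}. One caveat in Step 2: the pointwise trace chain rule $\ntr\Gamma(\mtx{u},\econst^{\theta\mtx{u}})\leq\theta\,\ntr[\econst^{\theta\mtx{u}}\Gamma(\mtx{u})]$ is false for non-diffusion semigroups (a two-point jump process already violates it at the site where $\mtx{u}$ is small); only its $\mu$-averaged version holds, and its proof needs reversibility through the triple-product identity (this is Lemma~\ref{lem:key_Gamma} via Lemma~\ref{lem:three_limit}). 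Since you only use it under $\E_\mu$, this is repairable, but it is precisely the subtlety the paper's lemma exists to handle.

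The genuine gap is that your differential inequality in $s$ does not close for the Bernstein-type bound \eqref{eqn:tail_bound_1}. The paper freezes the exponential at the original function and differentiates in $\theta$: with $m(\theta)=\E_\mu\ntr\econst^{\theta\mtx{f}}$ one has $m'(\theta)=-\int_0^\infty\E_\mu\ntr[\mL(P_t\mtx{f})\,\econst^{\theta\mtx{f}}]\,\diff t$, and after Lemma~\ref{lem:key_Gamma}, local ergodicity, and the entropy decoupling (Fact~\ref{lem:Young_inequality}), the leftover relative-entropy term is bounded by $\theta\, m'(\theta)$ --- the very derivative being estimated --- so integrating in $t$ produces the closed ODE $m'(\theta)\leq c\theta\, m(\theta)\, r_{\mtx{f}}(\beta)+(c\theta^2/\beta)\,m'(\theta)$. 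In your scheme, where $M(s)=\E_\mu\ntr\econst^{\theta(P_s\mtx{f}-\E_\mu\mtx{f})}$ and the exponent evolves, the decoupling of $\E_\mu\ntr[\econst^{\theta\mtx{u}_s}P_s\Gamma(\mtx{f})]$ does not yield ``a factor $M(s)^{1-\lambda}$'': trace H\"older produces $(\E_\mu\ntr\econst^{p\theta\mtx{u}_s})^{1/p}$, an mgf at the inflated parameter $p\theta$, while Fact~\ref{lem:Young_inequality} produces $M(s)\,r_{\mtx{f}}(\beta)+\beta^{-1}\E_\mu\ntr[\econst^{\theta\mtx{u}_s}\log(\econst^{\theta\mtx{u}_s}/M(s))]$, whose entropy term is a $\theta$-derivative-type quantity controlled neither by $M(s)$ nor by $M'(s)$; taming it by $M(s)$ would require a log-Sobolev-type input, which is exactly what this method is built to avoid. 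Your interpolation does close in the bounded case: if $v_{\mtx{f}}<\infty$, then $\E_\mu\ntr[\econst^{\theta\mtx{u}_s}P_s\Gamma(\mtx{f})]\leq v_{\mtx{f}}M(s)$, so $-\tfrac{\diff{}}{\diff s}\log M(s)\leq\theta^2 v_{\mtx{f}}\econst^{-2s/c}$ integrates to $\log M(0)\leq c v_{\mtx{f}}\theta^2/2$, recovering the subgaussian tail and the expectation bound. But the main assertion \eqref{eqn:tail_bound_1} with the exponential mean $r_{\mtx{f}}(\beta)$ is not reached by the argument as written; you need the paper's $\theta$-differentiation (or an equivalent device that makes the entropy term self-referential).
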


\noindent
We establish Theorem~\ref{thm:exponential_concentration} in Section~\ref{sec:exponential_concentration_proof}
as a consequence of an exponential moment inequality, Theorem~\ref{thm:exponential_moment}, for random matrices.
By combining Theorem~\ref{thm:exponential_concentration}  with the examples in Section~\ref{sec:examples},
we obtain concentration results for concrete random matrix models.

A partial version of Theorem~\ref{thm:exponential_concentration} with slightly worse constants
appears in \cite[Corollary 4.13]{junge2015noncommutative}.
When comparing these results, note that probability measure in \cite{junge2015noncommutative}
is normalized to absorb the dimensional factor $d$.

\subsubsection{Exponential Efron--Stein inequality for product measures}

We can reproduce the matrix exponential Efron--Stein inequalities of Paulin et al.~\cite[Theorem 4.3]{paulin2016efron}
by applying Theorem~\ref{thm:exponential_moment} to a product measure (Section~\ref{sec:product_measure_all_intro}).
For instance, we obtain the following subgaussian inequality.

\begin{corollary}[Product measure: Subgaussian concentration]\label{cor:product_measure_tailbound} Let $\mu = \mu_1\otimes \mu_2\otimes \cdots\otimes\mu_n$ be a product measure on a product space $\Omega = \Omega_1\otimes \Omega_2\otimes \cdots\otimes \Omega_n $. Let $\mtx{f}:\Omega \rightarrow \mathbb{H}_d$ be a suitable function.
Define the variance proxy $v_{\mtx{f}} := \norm{ \|\mtx{V}(\mtx{f}) \| }_{L_{\infty}(\mu)}$,
where $\mtx{V}(\mtx{f})$ is given by \eqref{eqn:variance_proxy}.  Then \begin{align*}\label{eqn:product_measure_tailbound}
\Prob{\lambda_{\max}(\mtx{f}-\E_\mu\mtx{f})\geq t } \leq d\cdot\exp\left(-\frac{t^2}{4v_{\mtx{f}}}\right)
\quad\text{for all $t \geq 0$.} 
\end{align*}
Furthermore, 
\begin{equation*}\label{eqn:product_measure_expectation}
\E_\mu \lambda_{\max}(\mtx{f}-\E_\mu\mtx{f}) \leq 2\sqrt{v_{\mtx{f}}\log d}.
\end{equation*}
Parallel results hold for the minimum eigenvalue $\lambda_{\min}$.
\end{corollary}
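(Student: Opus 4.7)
The plan is to derive this corollary as an immediate specialization of Theorem~\ref{thm:exponential_concentration} to the product-measure semigroup described in Section~\ref{sec:product_measure_all_intro}. The corollary is essentially a ``plug-and-chug'' consequence once we identify the correct semigroup, so the work lies in matching up constants rather than in any new estimate.

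First, I would invoke the construction of Section~\ref{sec:product_measure_all_intro}: given the product space $\Omega=\Omega_1\otimes\cdots\otimes\Omega_n$ with product measure $\mu=\mu_1\otimes\cdots\otimes\mu_n$, there is a reversible, ergodic Markov semigroup $(P_t)_{t\geq 0}$ of the form~\eqref{eqn:semigroup} whose carr\'e du champ operator is given explicitly by $\Gamma(\mtx{f})=\mtx{V}(\mtx{f})$ as in~\eqref{eqn:variance_proxy}, and which satisfies the scalar Bakry--\'Emery criterion with constant $c=2$. By Proposition~\ref{prop:BE_equiv}, this automatically upgrades to the matrix Bakry--\'Emery criterion~\eqref{Bakry-Emery} with the same constant $c=2$ for every dimension $d\in\N$. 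This places us squarely in the setting required by Theorem~\ref{thm:exponential_concentration}.

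Next, the identification $\Gamma(\mtx{f})=\mtx{V}(\mtx{f})$ means the variance proxy appearing in Theorem~\ref{thm:exponential_concentration}, namely $\|\,\|\Gamma(\mtx{f})\|\,\|_{L_\infty(\mu)}$, coincides with the variance proxy $v_{\mtx{f}}=\|\,\|\mtx{V}(\mtx{f})\|\,\|_{L_\infty(\mu)}$ in the statement of the corollary. Assuming $\mtx{f}$ is such that $v_{\mtx{f}}<\infty$, Theorem~\ref{thm:exponential_concentration} gives
$$
\mathbb{P}_{\mu}\bigl\{\lambda_{\max}(\mtx{f}-\E_\mu\mtx{f})\geq t\bigr\}
\leq d\cdot\exp\!\left(\frac{-t^2}{2cv_{\mtx{f}}}\right)
=d\cdot\exp\!\left(\frac{-t^2}{4v_{\mtx{f}}}\right),
$$
and likewise $\E_\mu\lambda_{\max}(\mtx{f}-\E_\mu\mtx{f})\leq\sqrt{2cv_{\mtx{f}}\log d}=2\sqrt{v_{\mtx{f}}\log d}$, exactly the claimed bounds.

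Finally, to obtain the parallel $\lambda_{\min}$ statement, I would apply the $\lambda_{\max}$ bound to the function $-\mtx{f}$, using that $\mtx{V}$ depends on $\mtx{f}$ only through squared differences and thus satisfies $\mtx{V}(-\mtx{f})=\mtx{V}(\mtx{f})$; hence the variance proxy is unchanged, and $\lambda_{\max}(-(\mtx{f}-\E_\mu\mtx{f}))=-\lambda_{\min}(\mtx{f}-\E_\mu\mtx{f})$ converts the tail bound into the required lower-tail inequality. The only ``hard'' step is the one we do not redo here: the verification of the scalar Bakry--\'Emery constant $c=2$ and the formula~\eqref{eqn:variance_proxy} for the product-space semigroup, both of which are delivered by the construction in Section~\ref{sec:product_measure_all_intro}.
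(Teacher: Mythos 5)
Your proposal is correct and follows exactly the paper's route: Section~\ref{sec:concentration_results_product} proves the corollary by citing the product-measure semigroup construction (reversible, ergodic, with $\Gamma(\mtx{f})=\mtx{V}(\mtx{f})$ and Bakry--\'Emery constant $c=2$ from Theorem~\ref{thm:product_measure_localPoincare}) and then applying Theorem~\ref{thm:exponential_concentration} with $c=2$, which yields the same constants $2cv_{\mtx{f}}=4v_{\mtx{f}}$ and $\sqrt{2cv_{\mtx{f}}\log d}=2\sqrt{v_{\mtx{f}}\log d}$. Your explicit handling of $\lambda_{\min}$ via $-\mtx{f}$ matches how the paper treats it inside the proof of Theorem~\ref{thm:exponential_concentration}.
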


\noindent
We defer the proof to Section~\ref{sec:concentration_results_product}. 

\subsubsection{Log-concave measures}

We can also obtain exponential concentration for a matrix-valued function
of a log-concave measure by combining Theorem~\ref{thm:exponential_concentration}
with the results in Section~\ref{sec:log-concave_intro}.

\begin{corollary}[Log-concave measure: Subgaussian concentration]\label{cor:log-concave_concentration}
Let $\diff \mu \propto \econst^{-W(z)}\idiff z$ be a log-concave probability measure on $\mathbb{R}^n$
whose potential $W : \mathbb{R}^n \to \mathbb{R}$ satisfies a uniform strong convexity condition:
$\operatorname{Hess} W \succcurlyeq \eta \cdot \Id_n$ where $\eta > 0$.
Let $\mtx{f} \in \mathrm{H}_{2,\mu}(\R^n; \mathbb{H}_d)$, and define the variance proxy
\[
v_{\mtx{f}} := \sup\nolimits_{z \in \R^n} \norm{ \sum_{i=1}^n(\partial_i\mtx{f}(z))^2 }.
\]
Then
\[\mathbb{P}_{\mu}\left\{\lambda_{\max}(\mtx{f}-\E_\mu\mtx{f})\geq t \right\} \leq d\cdot\exp\left(\frac{-\eta t^2}{2v_{\mtx{f}}}\right)
\quad\text{for all $t \geq 0$.}
\]
Furthermore, 
\[\E_\mu \lambda_{\max}(\mtx{f}-\E_\mu\mtx{f}) \leq \sqrt{2 \eta^{-1} v_{\mtx{f}}\log d }.\]
Parallel results hold for the minimum eigenvalue $\lambda_{\min}$.
\end{corollary}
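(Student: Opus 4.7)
The plan is to obtain Corollary~\ref{cor:log-concave_concentration} as a direct specialization of Theorem~\ref{thm:exponential_concentration} to the diffusion semigroup canonically attached to the log-concave measure $\mu$, with the matrix Bakry--{\'E}mery criterion delivered for free by Proposition~\ref{prop:BE_equiv}.

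First, I would invoke the Langevin (overdamped) semigroup $(P_t)_{t \geq 0}$ on $\R^n$ whose infinitesimal generator is $\mL = \Delta - \nabla W \cdot \nabla$, acting entrywise on matrix-valued functions. As recalled in Section~\ref{sec:log-concave_intro}, this semigroup is reversible with respect to $\mu$, is ergodic under strong convexity of $W$, and its matrix carr{\'e} du champ operator takes the form
\[
\Gamma(\mtx{f})(z) = \sum_{i=1}^n (\partial_i \mtx{f}(z))^2.
\]
The hypothesis $\mtx{f} \in \mathrm{H}_{2,\mu}(\R^n;\mathbb{H}_d)$ places $\mtx{f}$ in the natural Sobolev domain in which the carr{\'e} du champ, the iterated carr{\'e} du champ, and the semigroup derivatives from Section~\ref{sec:matrix_Markov_semigroups} are justified by approximation from smooth, compactly supported functions.

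Second, I would use Proposition~\ref{prop:BE_equiv} to upgrade scalar Bakry--{\'E}mery to its matrix counterpart. The classical Bakry--{\'E}mery computation shows that uniform strong convexity $\operatorname{Hess} W \psdge \eta \cdot \Id_n$ is equivalent to the scalar Bakry--{\'E}mery criterion with constant $c = \eta^{-1}$; see \cite[Sec.~4.8]{bakry2013analysis}. Proposition~\ref{prop:BE_equiv} then automatically yields $\Gamma(\mtx{f}) \psdle \eta^{-1} \Gamma_2(\mtx{f})$ for every $d \in \N$, with no additional work on matrix-valued functions at all.

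Third, I would substitute $c = \eta^{-1}$ into Theorem~\ref{thm:exponential_concentration}. Since $\mu$ has strictly positive density on $\R^n$ and $\mtx{f}$ is sufficiently regular, the supremum $\sup_{z \in \R^n} \norm{\sum_{i=1}^n (\partial_i \mtx{f}(z))^2}$ appearing in the corollary coincides with the essential supremum $\norm{\norm{\Gamma(\mtx{f})}}_{L_\infty(\mu)}$ of Theorem~\ref{thm:exponential_concentration}, so the two variance proxies agree. The theorem then produces the tail bound $d \cdot \exp(-\eta t^2/(2 v_{\mtx{f}}))$ on $\lambda_{\max}(\mtx{f} - \E_\mu \mtx{f})$ and the expectation bound $\sqrt{2 \eta^{-1} v_{\mtx{f}} \log d}$ verbatim. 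The analogous inequalities for $\lambda_{\min}$ follow by applying the same argument to $-\mtx{f}$, which has the same carr{\'e} du champ.

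The main obstacle I anticipate is analytic rather than conceptual: one must verify that the Langevin semigroup satisfies all the structural hypotheses of Theorem~\ref{thm:exponential_concentration} on the entire Sobolev space $\mathrm{H}_{2,\mu}(\R^n;\mathbb{H}_d)$, including ergodicity, reversibility, differentiability in $t$, and validity of the carr{\'e} du champ identity. These are standard for strongly log-concave potentials (see \cite[Ch.~2 and Sec.~4.8]{bakry2013analysis}), and the extension from the scalar case to matrix-valued $\mtx{f}$ is entrywise; nevertheless, this is where all the genuine analytic bookkeeping lives. Once it is in place, the proof is a one-line substitution into Theorem~\ref{thm:exponential_concentration}.
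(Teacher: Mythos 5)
Your proposal is correct and follows essentially the same route as the paper: the paper likewise verifies that the Langevin semigroup for $\mu$ is reversible and ergodic, identifies $\Gamma(\mtx{f}) = \sum_{i=1}^n (\partial_i \mtx{f})^2$, obtains the Bakry--{\'E}mery criterion with $c = \eta^{-1}$ from $\operatorname{Hess} W \succcurlyeq \eta \cdot \Id_n$ (lifted to matrices via Proposition~\ref{prop:BE_equiv}), and then applies Theorem~\ref{thm:exponential_concentration} with $c = \eta^{-1}$. Your identification of the variance proxies and the treatment of $\lambda_{\min}$ via $-\mtx{f}$ also match the paper's argument.
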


\noindent
See Section~\ref{sec:concentration_results_log-concave} for the proof.

\begin{example}[Matrix Gaussian series] \label{ex:matrix-gauss}
Consider the standard normal measure $\gamma_n$ on $\R^n$.
Its potential, $W(z) = z^\trsp z / 2$, is uniformly strongly convex
with parameter $\eta = 1$.  Therefore, Corollary~\ref{cor:log-concave_concentration}
gives subgaussian concentration for matrix-valued functions of a Gaussian
random vector.  
To make a comparison with familiar results, we construct the matrix Gaussian series
\begin{equation*} \label{eqn:gauss-series_1}
\mtx{f}(z) = \sum_{i=1}^n Z_i \mtx{A}_i
\quad\text{where $z = (Z_1, \dots, Z_n) \sim \gamma_n$ and $\mtx{A}_i \in \mathbb{H}_d$ are fixed.}
\end{equation*}
In this case, the carr{\'e} du champ is simply
$$
\Gamma(\mtx{f})(z) = \sum_{i=1}^n \mtx{A}_i^2.
$$
Thus, the expectation bound states that
\begin{equation*} \label{eqn:gauss-series_2}
\Expect_{\gamma_n} \lambda_{\max}(\mtx{f}(z)) \leq \sqrt{2 v_{\mtx{f}} \log d}
\quad\text{where}\quad
v_{\mtx{f}} = \norm{ \sum_{i=1}^n \mtx{A}_i^2 }.
\end{equation*}
Up to and including the constants, this matches the sharp bound that follows from
``linear'' matrix concentration techniques~\cite[Chapter 4]{tropp2015introduction}.
\end{example}

Van Handel (private communication) has outlined out an alternative proof of
Corollary~\ref{cor:log-concave_concentration} with slightly worse constants.
His approach uses Pisier's method~\cite[Thm.~2.2]{pisier1986probabilistic}
and the noncommutative Khintchine inequality~\cite{buchholz2001operator} to
obtain the statement for the standard normal measure.  Then Caffarelli's
contraction theorem~\cite{Caf00:Monotonicity-Properties} implies that the
same bound holds for every log-concave measure whose potential is
uniformly strongly convex with $\eta \geq 1$.  This approach is short
and conceptual, but it is more limited in scope.

\subsection{Riemannian measures}
\label{sec:riemann-exp}

As discussed in Section~\ref{sec:Riemannin_intro},
the Brownian motion on a Riemannian manifold with uniformly positive curvature
satisfies the Bakry--{\'E}mery criterion~\eqref{Bakry-Emery}.
Therefore, we can apply both Theorem~\ref{thm:polynomial_moment} and 
Theorem~\ref{thm:exponential_concentration} 
in this setting.  Let us give a few concrete examples of the
kind of results that can be derived with these methods.

\subsubsection{The sphere}

Consider the uniform distribution $\sigma_n$ on the $n$-dimensional unit sphere
$\mathbb{S}^n \subset \R^{n+1}$ for $n \geq 2$.  The Brownian motion on the sphere satisfies
the Bakry--{\'E}mery criterion~\eqref{Bakry-Emery} with $c = (n-1)^{-1}$.
Therefore, Theorem~\ref{thm:polynomial_moment} implies that, for any suitable function $\mtx{f} : \mathbb{S}^n \to \mathbb{H}_d$,
\[
\left[ \E_{\sigma_n} \trace|\mtx{f}-\E_{\sigma_n} \mtx{f}|^{2q}\right]^{1/(2q)}
\leq \sqrt{\frac{2q-1}{n-1}}\left[ \E_{\sigma_n} \trace\Gamma(\mtx{f})^q\right]^{1/(2q)},
\]
where the carr{\'e} du champ $\Gamma(\mtx{f})$ is defined by~\eqref{eqn:gamma_Riemannian_0}.
We can also obtain subgaussian tail bounds in terms of the variance proxy
$
v_{\mtx{f}} := \norm{ \norm{ \Gamma(\mtx{f}) } }_{L_{\infty}(\sigma_n)}.
$
Indeed, Theorem~\ref{thm:exponential_concentration} yields the bound
\[
\mathbb{P}_{\sigma_n}\left\{\lambda_{\max}(\mtx{f}-\E_{\sigma_n}\mtx{f})\geq t \right\}
	\leq d\cdot \exp \left(\frac{-(n-1)t^2}{2v_{\mtx{f}}} \right)
\quad\text{for all $t \geq 0$.}
\]
To use these concentration inequalities, we need to compute the carr{\'e} du champ $\Gamma(\mtx{f})$ and bound the variance proxy $v_{\mtx{f}}$ for particular functions $\mtx{f}$.  

We give two illustrations, postponing the detailed calculations to Section~\ref{sec:Riemannian_gamma}.
In each case, let $x = (x_1, \dots, x_{n+1}) \in \mathbb{S}^n$
be a random vector drawn from the uniform probability measure $\sigma_n$.
Suppose that $(\mtx{A}_1, \dots, \mtx{A}_{n+1}) \subset \mathbb{H}_d$ is a list of deterministic Hermitian matrices.

\begin{example}[Sphere I]\label{example:sphere_I}
Consider the random matrix $\mtx{f}(x) = \sum_{i=1}^{n+1}x_i\mtx{A}_i$. We can compute the carr{\'e} du champ as
\begin{equation}\label{eqn:gamma_sphere_I}
\Gamma(\mtx{f})(x) = \sum_{i=1}^{n+1}\mtx{A}_i^2 - \left(\sum_{i=1}^{n+1} x_i\mtx{A}_i\right)^2
	\psdge \mtx{0}.
\end{equation}
It is obvious that $\Gamma(\mtx{f})(x) \psdle \sum_{i=1}^{n+1}\mtx{A}_i^2$ for all $x\in \mathbb{S}^n$, so the variance proxy  $v_{\mtx{f}}\leq \norm{ \sum_{i=1}^{n+1}\mtx{A}_i^2 }$.

Compare this calculation with Example~\ref{ex:matrix-gauss},
where the coefficients follow the standard normal distribution.
For the sphere, the carr{\'e} du champ operator is
smaller because a finite-dimensional sphere has
slightly more curvature than the Gauss space.
\end{example}

\begin{example}[Sphere II]\label{example:sphere_II}
Consider the random matrix $\mtx{f}(x) = \sum_{i=1}^{n+1}x_i^2\mtx{A}_i$. The carr{\'e} du champ admits the expression
\begin{equation}\label{eqn:gamma_sphere_II}
\Gamma(\mtx{f})(x) = 2\sum_{i,j=1}^{n+1}x_i^2x_j^2(\mtx{A}_i-\mtx{A}_j)^2. 
\end{equation}
A simple bound shows that the variance proxy $v_{\mtx{f}} \leq 2 \max_{i, j} \norm{ \smash{\mtx{A}_i - \mtx{A}_j} }$.
It is possible to make further improvements in some cases.
\end{example}

\subsubsection{The special orthogonal group}

The Riemannian manifold framework also encompasses matrix-valued functions of random orthogonal matrices.
For instance, suppose that $\mtx{O}_1, \dots, \mtx{O}_n \in \mathrm{SO}(d)$ are drawn independently
and uniformly from the Haar measure $\mu$ on the special orthogonal group $\mathrm{SO}(d)$.
As discussed in Section~\ref{sec:Riemannin_intro}, the Brownian motion on the product
space satisfies the Bakry--{\'E}mery criterion with constant $c = 4/(d-1)$.
In particular, if $\mtx{f} : \mathrm{SO}(d)^{\otimes n} \to \mathbb{H}_d$,
$$
\mathbb{P}_{\mu^{\otimes n}}\left\{ \lambda_{\max}(\mtx{f} - \E_{\mu^{\otimes n}} \mtx{f}) \geq t \right\}
	\leq d \cdot \exp\left( \frac{-(d-1) t^2}{8 v_{\mtx{f}}} \right)
	\quad\text{for all $t \geq 0$.}
$$
Here is a particular example where we can bound the variance proxy.

\begin{example}[Special orthogonal group]\label{example:SO_d}
Let $(\mtx{A}_1, \dots, \mtx{A}_n) \subset \mathbb{H}_d(\mathbb{R})$ be a fixed list of real, symmetric matrices.
Consider the random matrix $\mtx{f}(\mtx{O}_1, \dots, \mtx{O}_n) = \sum_{i=1}^n \mtx{O}_i \mtx{A}_i \mtx{O}_i^\trsp$.
The carr{\'e} du champ is  
\begin{equation}\label{eqn:gamma_SO_d}
\Gamma(\mtx{f})(\mtx{O}_1, \dots, \mtx{O}_n) = \frac{1}{2}\sum_{i=1}^n\mtx{O}_i\left[ \left(\trace[\mtx{A}_i^2]-d^{-1}\trace[\mtx{A}_i]^2\right)\cdot\Id_d + d\left(\mtx{A}_i-d^{-1}\trace[\mtx{A}_i]\cdot \Id_d \right)^2\right] \mtx{O}_i^\trsp.
\end{equation}
Each matrix $\mtx{O}_i$ is orthogonal, so the variance proxy satisfies 
\[
v_{\mtx{f}} \leq \frac{1}{2}\sum_{i=1}^n \left[ \trace[\mtx{A}_i^2]-d^{-1}\trace[\mtx{A}_i]^2 + d\cdot \norm{\mtx{A}_i - d^{-1}\trace[\mtx{A}_i]\cdot \Id_d  }^2 \right].
\]
The details of the calculation appear in Section~\ref{sec:Riemannian_gamma}.
\end{example}

\subsection{Extension to general rectangular matrices}

By a standard formal argument, we can extend the results in this section to a function $\mtx{h}:\Omega\rightarrow \mathbb{M}^{d_1\times d_2}$ that takes rectangular matrix values.  To do so, we simply apply the theorems to the self-adjoint dilation 
\[\mtx{f}(z) = \left[\begin{array}{cc} 
\mtx{0} & \mtx{h}(z)\\
\mtx{h}(z)^*& \mtx{0} \end{array}\right] \in \mathbb{H}_{d_1+d_2}.\] 
See~\cite{tropp2015introduction} for many examples of this methodology.

\subsection{History}\label{sec:concentration_history}

Matrix concentration inequalities are noncommutative extensions of their scalar counterparts.
They have been studied extensively, and they have had a profound impact on a
wide range of areas in computational mathematics and statistics.
The models for which the most complete results are available
include a sum of independent random matrices~\cite{lust1986inegalites,rudelson1999random,oliveira2010sums,tropp2012user,huang2019generalized}
and a matrix-valued martingale sequence~\cite{pisier1997non,oliveira2009concentration,tropp2011freedman,junge2015noncommutative,howard2018exponential}.
We refer to the monograph \cite{tropp2015introduction} for an introduction and an extensive bibliography.  Very recently, some concentration results for products of random matrices have
also been established~\cite{henriksen2020concentration,huang2020matrix}.

In recent years, many authors have sought concentration results for more general random matrix models.
One natural idea is to develop matrix versions of scalar concentration techniques based on functional inequalities
or based on Markov processes.

In the scalar setting, the subadditivity of the entropy plays a basic
role in obtaining modified log-Sobolev inequalities for product spaces,
a core ingredient in proving subgaussian concentration results.
Chen and Tropp \cite{chen2014subadditivity}
established the subadditivity of matrix trace entropy quantities.
Unfortunately, the approach in \cite{chen2014subadditivity}
requires awkward additional assumptions to derive matrix
concentration from modified log-Sobolev inequalities.
Cheng et al.~\cite{cheng2016characterizations,cheng2017exponential,cheng2019matrix}
have extended this line of research.

Mackey et al.~\cite{mackey2014,paulin2016efron} observed that the method
of exchangeable pairs~\cite{stein1972,stein1986approximate,chatterjee2005concentration}
leads to more satisfactory matrix concentration inequalities,
including matrix generalizations of the Efron--Stein--Steele inequality.
The argument in~\cite{paulin2016efron} can be viewed as a discrete version
of the semigroup approach that we use in this paper;
see Appendix~\ref{apdx:Stein_method} for more discussion.

Very recently, Aoun et al.~\cite{ABY20:Matrix-Poincare} showed how to derive
exponential matrix concentration inequalities from the matrix Poincar{\'e} inequality~\eqref{eqn:matrix_Poincare}.
Their approach is based on the classic iterative argument, due to
Aida \& Stroock~\cite{aida1994moment}, that operates in the scalar setting.
For matrices, it takes serious effort to implement this technique.
In our companion paper~\cite{HT20:Trace-Poincare}, we have shown that
a trace Poincar{\'e} inequality leads to stronger exponential concentration
results via an easier argument.

Another appealing contribution of the paper~\cite{ABY20:Matrix-Poincare} is to
establish the validity of a matrix Poincar\'e inequality for
particular matrix-valued Markov processes.  Unfortunately,
Poincar{\'e} inequalities are apparently not strong
enough to capture subgaussian concentration. 
In the scalar case, log-Sobolev inequalities lead to subgaussian concentration inequalities.
At present, it is not clear how to extend the theory of log-Sobolev inequalities to matrices,
and this obstacle has delayed progress on studying matrix concentration via functional inequalities.

In the scalar setting, one common technique for establishing a log-Sobolev inequality is
to prove that the Bakry--{\'E}mery criterion holds~\cite[Problem 3.19]{van550probability}.
Inspired by this observation, we have chosen to investigate the implications
of the Bakry--{\'E}mery criterion~\eqref{Bakry-Emery} for Markov semigroups acting on matrix-valued functions. 
Our work demonstrates that this type of curvature condition allows us to establish matrix moment
bounds directly, without the intermediation of a log-Sobolev inequality.
As a consequence, we can obtain subgaussian and subgamma concentration
for nonlinear random matrix models. 

After establishing the results in this paper, we discovered that Junge \& Zeng~\cite{junge2015noncommutative}
have also derived subgaussian matrix concentration inequalities from the (noncommutative) Bakry--{\'E}mery criterion.
Their approach is based on a noncommutative version of the Burkholder--Davis--Gundy inequality and a martingale argument that applies to a wider class of noncommutative diffusion semigroups acting on von Neumann algebras. As a consequence, their results apply to a larger family of examples, but the moment growth bounds are somewhat worse.

In contrast, our paper develops a direct argument for the
mixed classical-quantum semigroup~\eqref{eqn:semigroup}
that does not require any sophisticated tools from operator
theory or noncommutative probability.
Instead, we establish a new trace inequality (Lemma~\ref{lem:key_Gamma})
that mimics the chain rule for a scalar diffusion semigroup.

\section{Matrix Markov semigroups: Properties and proofs}
\label{sec:matrix_Markov_semigroups_more}

This section presents some other fundamental facts about matrix Markov semigroups.
We also provide proofs of the propositions from Section~\ref{sec:matrix_Markov_semigroups}.

\subsection{Properties of the carr\'e du champ operator}

Our first proposition gives the matrix extension of some classic
facts about the carr\'e du champ operator $\Gamma$.
Parts of this result are adapted from~\cite[Prop.~2.2]{ABY20:Matrix-Poincare}.

\begin{proposition}[Matrix carr{\'e} du champ] \label{prop:Gamma_property}
Let $(Z_t)_{t \geq 0}$ be a Markov process.
The associated matrix bilinear form $\Gamma$ has the following properties:
\begin{enumerate}
\item \label{limit_formula}
For all suitable $\mtx{f},\mtx{g}: \Omega \rightarrow \mathbb{H}_d$ and all $z \in \Omega$,
\begin{equation}\label{eqn:limit_formula_Gamma}
\Gamma(\mtx{f},\mtx{g})(z) = \lim_{t\downarrow 0} \frac{1}{2t} \E\big[\big(\mtx{f}(Z_t)-\mtx{f}(Z_0)\big)\big(\mtx{g}(Z_t)-\mtx{g}(Z_0)\big)\,\big|\,Z_0=z\big].
\end{equation}

\item\label{gamma_psd}
In particular, the quadratic form $\mtx{f} \mapsto \Gamma(\mtx{f})$ is positive: $\Gamma(\mtx{f})\succcurlyeq \mtx{0}$.

\item\label{gamma_young}
For all suitable $\mtx{f},\mtx{g}: \Omega \rightarrow \mathbb{H}_d$ and all $s > 0$, 
\[\Gamma(\mtx{f},\mtx{g}) + \Gamma(\mtx{g},\mtx{f})\preccurlyeq s \,\Gamma(\mtx{f}) + s^{-1}\,\Gamma(\mtx{g}).\]

\item \label{convexity}
The quadratic form induced by $\Gamma$ is operator convex:
\[\Gamma\big(\tau\mtx{f}+(1-\tau)\mtx{g}\big)\preccurlyeq \tau \,\Gamma(\mtx{f}) + (1-\tau)\,\Gamma(\mtx{g})\quad \text{for each}\ \tau\in[0,1].\]
\end{enumerate}
Similar results hold for the matrix Dirichlet form, owing
to the definition~\eqref{eqn:Dirichlet_form}.
\end{proposition}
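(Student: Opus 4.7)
The plan is to establish part \eqref{limit_formula} first, since the remaining three parts all fall out of it combined with standard positive semidefinite inequalities on Hermitian matrices.

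For \eqref{limit_formula}, I would apply the generator definition \eqref{eqn:Markov_generator} to each of the three terms in the formula \eqref{eqn:definition_Gamma} defining $\Gamma(\mtx{f},\mtx{g})$. Writing $\mL(\mtx{f}\mtx{g})(z)$, $\mtx{f}(z)\,\mL(\mtx{g})(z)$, and $\mL(\mtx{f})(z)\,\mtx{g}(z)$ each as $\lim_{t\downarrow 0}\frac{1}{t}\E[\,\cdot\,|Z_0=z]$ of the appropriate matrix product minus $\mtx{f}(z)\mtx{g}(z)$, one simply recognizes the combination
\[
\mtx{f}(Z_t)\mtx{g}(Z_t) - \mtx{f}(z)\mtx{g}(Z_t) - \mtx{f}(Z_t)\mtx{g}(z) + \mtx{f}(z)\mtx{g}(z) = (\mtx{f}(Z_t)-\mtx{f}(z))(\mtx{g}(Z_t)-\mtx{g}(z))
\]
inside the conditional expectation. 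Dividing by $2$ yields \eqref{eqn:limit_formula_Gamma}. The only subtlety is that matrix products are noncommutative, so the order of the factors matters throughout; I would be careful to preserve the left/right placement of $\mtx{f}$ and $\mtx{g}$ in each term.

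Part \eqref{gamma_psd} is then immediate: setting $\mtx{g}=\mtx{f}$ in the limit formula gives $\Gamma(\mtx{f})(z)$ as the limit of $\frac{1}{2t}\E[(\mtx{f}(Z_t)-\mtx{f}(z))^2 \mid Z_0=z]$, and since $(\mtx{f}(Z_t)-\mtx{f}(z))^2\succcurlyeq \mtx{0}$ pointwise (the square of a Hermitian matrix), both the conditional expectation and the limit preserve the semidefinite order.

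Part \eqref{gamma_young} I would prove by bilinearity. Expanding $\Gamma(s^{1/2}\mtx{f} - s^{-1/2}\mtx{g})$ using the bilinearity of $\Gamma$ gives
\[
\Gamma(s^{1/2}\mtx{f} - s^{-1/2}\mtx{g}) = s\,\Gamma(\mtx{f}) - \Gamma(\mtx{f},\mtx{g}) - \Gamma(\mtx{g},\mtx{f}) + s^{-1}\,\Gamma(\mtx{g}),
\]
and invoking \eqref{gamma_psd} on the left side yields the desired inequality. For part \eqref{convexity}, I would again use the limit formula \eqref{eqn:limit_formula_Gamma} with both arguments equal to $\tau\mtx{f}+(1-\tau)\mtx{g}$, and then apply the elementary PSD inequality $\mtx{u}\mtx{v}+\mtx{v}\mtx{u} \preccurlyeq \mtx{u}^2 + \mtx{v}^2$ (which follows from $(\mtx{u}-\mtx{v})^2\succcurlyeq\mtx{0}$) to the cross terms in $(\tau\mtx{u}+(1-\tau)\mtx{v})^2$, where $\mtx{u}=\mtx{f}(Z_t)-\mtx{f}(z)$ and $\mtx{v}=\mtx{g}(Z_t)-\mtx{g}(z)$. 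Since $\tau^2 + \tau(1-\tau) = \tau$, this gives the pointwise inequality $(\tau\mtx{u}+(1-\tau)\mtx{v})^2 \preccurlyeq \tau\mtx{u}^2 + (1-\tau)\mtx{v}^2$, and conditional expectation plus the $t\downarrow 0$ limit deliver the operator convexity statement. The Dirichlet form results then follow from \eqref{eqn:Dirichlet_form} by taking $\E_\mu$ of both sides of each inequality, since $\E_\mu$ preserves the semidefinite order.

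The main obstacle is really just bookkeeping in part \eqref{limit_formula}: the generator formulas require working with a suitable algebra of functions where the limits exist and commute with expectations, and the noncommutativity of matrix multiplication means one must be meticulous about operator ordering. Once the limit formula is in place, the other three parts are essentially one-line consequences of standard facts about squares of Hermitian matrices.
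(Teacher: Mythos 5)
Your proposal is correct and follows essentially the same route as the paper: derive the limit formula \eqref{eqn:limit_formula_Gamma} from the definitions \eqref{eqn:definition_Gamma} and \eqref{eqn:Markov_generator}, get positivity from the square of a Hermitian matrix, and obtain the Young inequality by expanding $\Gamma(s^{1/2}\mtx{f}-s^{-1/2}\mtx{g})\succcurlyeq\mtx{0}$ via bilinearity. The only (inessential) difference is in part \eqref{convexity}, where you apply $\mtx{u}\mtx{v}+\mtx{v}\mtx{u}\preccurlyeq\mtx{u}^2+\mtx{v}^2$ pointwise inside the limit formula, whereas the paper argues at the level of the bilinear form using bilinearity together with part \eqref{gamma_young} at $s=1$; both rest on exactly the same inequality.
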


\begin{proof}
\textit{Proof of \eqref{limit_formula}.}
The limit form of the carr\'e du champ can be verified with a short calculation:
\begin{align*}
\Gamma(\mtx{f},\mtx{g})(z) =&\ \lim_{t\downarrow 0}\frac{1}{2t} \big[ \E[\mtx{f}(Z_t)\mtx{g}(Z_t)\,|\,Z_0=z]-\mtx{f}(z)\mtx{g}(z) \big] \\
&\quad - \lim_{t\downarrow 0}\frac{1}{2t} \big[\mtx{f}(z)\big(\E[\mtx{g}(Z_t)\,|\,Z_0=z]-\mtx{g}(z)\big) \big] - \lim_{t\downarrow 0}\frac{1}{2t}\big[\big(\E[\mtx{f}(Z_t)\,|\,Z_0=z]-\mtx{f}(z)\big)\mtx{g}(z)\big] \\
=&\ \lim_{t\downarrow 0}\frac{1}{2t} \E\big[\mtx{f}(Z_t)\mtx{g}(Z_t) - \mtx{f}(z)\mtx{g}(Z_t) -\mtx{f}(Z_t)\mtx{g}(z) + \mtx{f}(z)\mtx{g}(z)\,|\,Z_0=z\big]\\
=&\ \lim_{t\downarrow 0}\frac{1}{2t} \E
\big[(\mtx{f}(Z_t)-\mtx{f}(Z_0))(\mtx{g}(Z_t)-\mtx{g}(Z_0))\,|\,Z_0=z\big].
\end{align*}
The first relation depends on the definition~\eqref{eqn:definition_Gamma} of $\Gamma$
and the definition~\eqref{eqn:Markov_generator} of $\mL$.

\textit{Proof of \eqref{gamma_psd}.}  The fact that $\mtx{f} \mapsto \Gamma(\mtx{f})$
is positive follows from~\eqref{limit_formula} because the square
of a matrix is positive-semidefinite and the expectation preserves positivity.

\textit{Proof of \eqref{gamma_young}.}
The Young inequality for the carr{\'e} du champ follows from the fact that $\Gamma$
is positive:
\[\mtx{0}\preccurlyeq \Gamma(s^{1/2}\mtx{f} - s^{-1/2}\mtx{g}) = s \, \Gamma(\mtx{f}) + s^{-1} \,\Gamma(\mtx{g}) - \Gamma(\mtx{f},\mtx{g}) - \Gamma(\mtx{g},\mtx{f}). \]
The second relation holds because $\Gamma$ is a bilinear form.

\textit{Proof of \eqref{convexity}.}
To establish operator convexity, we use bilinearity again:
\begin{align*}
\Gamma(\tau\mtx{f}+(1-\tau)\mtx{g}) &= \tau^2\,\Gamma(\mtx{f}) + (1-\tau)^2\,\Gamma(\mtx{g}) + \tau(1-\tau)\left(\Gamma(\mtx{f},\mtx{g}) + \Gamma(\mtx{g},\mtx{f})\right) \\
&\preccurlyeq \tau^2\,\Gamma(\mtx{f}) + (1-\tau)^2\,\Gamma(\mtx{g}) + \tau(1-\tau)\left(\Gamma(\mtx{f}) + \Gamma(\mtx{g})\right)
=\tau \,\Gamma(\mtx{f}) + (1-\tau)\,\Gamma(\mtx{g}).
\end{align*}
The first semidefinite inequality follows from~\eqref{gamma_young} with $s = 1$.
\end{proof}

The next lemma is an extension of Proposition~\ref{prop:Gamma_property}\eqref{limit_formula}.
We use this result to establish the all-important chain rule inequality in Section~\ref{sec:trace_to_moment}.

\begin{lemma}[Triple product] \label{lem:three_limit} Let $(Z_t)_{t\geq0}$ be a reversible Markov process with a stationary measure $\mu$ and infinitesimal generator $\mL$. For all suitable $\mtx{f},\mtx{g},\mtx{h}:\Omega \rightarrow \mathbb{H}_d$
and all $z \in \Omega$,
\begin{align*}
& \lim_{t\downarrow 0} \frac{1}{t} \trace \E\big[\big(\mtx{f}(Z_t)-\mtx{f}(Z_0)\big)\big(\mtx{g}(Z_t)-\mtx{g}(Z_0)\big)\big(\mtx{h}(Z_t)-\mtx{h}(Z_0)\big)\big]\,\big|\,Z_0=z\big] \\
&\qquad= \trace\big[ \mL(\mtx{f}\mtx{g}\mtx{h}) - \mL(\mtx{f}\mtx{g})\mtx{h} - \mL(\mtx{h}\mtx{f})\mtx{g} - \mL(\mtx{g}\mtx{h})\mtx{f} + \mL(\mtx{f})\mtx{g}\mtx{h}+ \mL(\mtx{g})\mtx{h}\mtx{f} + \mL(\mtx{h})\mtx{f}\mtx{g}\big](z).
\end{align*}
In particular,
\[\E_{Z\sim\mu} \lim_{t\downarrow 0} \frac{1}{t} \trace \E\big[\big(\mtx{f}(Z_t)-\mtx{f}(Z_0)\big)\big(\mtx{g}(Z_t)-\mtx{g}(Z_0)\big)\big(\mtx{h}(Z_t)-\mtx{h}(Z_0)\big)\big]\,\big|\,Z_0=Z\big] = 0.\]
\end{lemma}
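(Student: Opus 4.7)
The plan is to expand the triple product
\[
(\mtx{f}(Z_t) - \mtx{f}(Z_0))(\mtx{g}(Z_t) - \mtx{g}(Z_0))(\mtx{h}(Z_t) - \mtx{h}(Z_0))
\]
into its eight signed constituents, take the conditional expectation given $Z_0 = z$, and then apply the trace. Inside the conditional expectation every factor evaluated at $Z_0$ equals the constant matrix $\mtx{f}(z)$, $\mtx{g}(z)$, or $\mtx{h}(z)$, and so pulls out of the expectation. Using cyclicity of trace, I would rotate each of the eight terms so that the $Z_t$-factors sit adjacent to one another and the $Z_0$-factors sit on one side. This produces exactly four shapes: one pure triple $\trace \E[\mtx{f}(Z_t)\mtx{g}(Z_t)\mtx{h}(Z_t)\,|\,Z_0=z]$, three pair-type terms of the form $\trace\big[\mtx{A}(z)\,\E[\mtx{B}(Z_t)\mtx{C}(Z_t)\,|\,Z_0=z]\big]$, three singleton-type terms of the form $\trace\big[\mtx{C}(z)\mtx{A}(z)\,\E[\mtx{B}(Z_t)\,|\,Z_0=z]\big]$, and one pure constant $-\trace[\mtx{f}(z)\mtx{g}(z)\mtx{h}(z)]$.

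Next I would apply the defining first-order expansion
\[
\E[\mtx{X}(Z_t)\,|\,Z_0=z] = \mtx{X}(z) + t\,\mL\mtx{X}(z) + o(t)
\]
coming from \eqref{eqn:Markov_generator}, to each of $\mtx{X}\in\{\mtx{f},\mtx{g},\mtx{h},\mtx{f}\mtx{g},\mtx{g}\mtx{h},\mtx{h}\mtx{f},\mtx{f}\mtx{g}\mtx{h}\}$. The eight $O(1)$ contributions all collapse to $\pm\trace[\mtx{f}(z)\mtx{g}(z)\mtx{h}(z)]$, with signs $+,-,-,-,+,+,+,-$ that sum to zero, so they cancel. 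Dividing by $t$ and sending $t\downarrow 0$, the $O(t)$ contributions from the seven non-constant terms assemble—after the cyclic rotations already performed—into precisely the seven traces on the right-hand side of the claim, with the correct signs.

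For the second assertion, I would integrate the identity just proved against $\mu$. The leading term $\E_\mu\trace[\mL(\mtx{f}\mtx{g}\mtx{h})]$ vanishes by the mean-zero property \eqref{eqn:mean_zero}. For the remaining six terms, reversibility delivers the matrix symmetry \eqref{eqn:reversibility_2}; since $\mL$ is linear and any matrix splits into a sum of two Hermitian matrices, this symmetry extends to general $\mathbb{M}_d$-valued functions. Combined with cyclicity of trace, this yields the pairings
\begin{align*}
\E_\mu\trace[\mL(\mtx{f}\mtx{g})\mtx{h}] &= \E_\mu\trace[\mtx{f}\mtx{g}\,\mL(\mtx{h})] = \E_\mu\trace[\mL(\mtx{h})\mtx{f}\mtx{g}], \\
\E_\mu\trace[\mL(\mtx{h}\mtx{f})\mtx{g}] &= \E_\mu\trace[\mL(\mtx{g})\mtx{h}\mtx{f}], \\
\E_\mu\trace[\mL(\mtx{g}\mtx{h})\mtx{f}] &= \E_\mu\trace[\mL(\mtx{f})\mtx{g}\mtx{h}].
\end{align*}
Each of these equalities sits in the overall sum with opposite signs, so the three pairs cancel and the total average is $0$.

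The main obstacle is essentially bookkeeping: ensuring that each of the eight expanded terms is rotated correctly under the trace so that $\mL$ lands on the intended product, and that the signs and the cyclic shifts in the three pair-type and three singleton-type contributions line up with the seven-term right-hand side. No machinery beyond the definition \eqref{eqn:Markov_generator} of $\mL$, the mean-zero identity \eqref{eqn:mean_zero}, and the reversibility relation \eqref{eqn:reversibility_2} is required.
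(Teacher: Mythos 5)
Your proposal is correct and follows essentially the same route as the paper: both expand the eight signed terms, use cyclicity of the trace to place the $Z_t$-factors adjacent, identify the limits with $\mL$ applied to $\mtx{f}\mtx{g}\mtx{h}$, $\mtx{f}\mtx{g}$, $\mtx{h}\mtx{f}$, $\mtx{g}\mtx{h}$, $\mtx{f}$, $\mtx{g}$, $\mtx{h}$ (the paper organizes this as telescoping difference quotients rather than your first-order expansion with cancelling $O(1)$ terms, a purely cosmetic difference), and then kill the average via the mean-zero property \eqref{eqn:mean_zero} together with the reversibility pairing. One small remark: when extending \eqref{eqn:reversibility_2} to the non-Hermitian products $\mtx{f}\mtx{g}$, the correct decomposition is $\mtx{A}=\mtx{H}_1+\iunit\,\mtx{H}_2$ with $\mtx{H}_1,\mtx{H}_2$ Hermitian (not a sum of two Hermitian matrices), but the linearity argument you invoke is exactly the right one and is the step the paper performs implicitly.
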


\begin{proof}
For simplicity, we abbreviate 
\[\mtx{f}_t = \mtx{f}(Z_t),\quad \mtx{g}_t = \mtx{g}(Z_t),\quad \mtx{h}_t = \mtx{h}(Z_t)\quad\text{and}\quad \mtx{f}_0 = \mtx{f}(Z_0),\quad \mtx{g}_0 = \mtx{g}(Z_0),\quad \mtx{h}_0 = \mtx{h}(Z_0).\]
Direct calculation gives 
\begin{align*}
& \lim_{t\downarrow 0} \frac{1}{t} \trace \E\left[\big(\mtx{f}(Z_t)-\mtx{f}(Z_0)\big)\big(\mtx{g}(Z_t)-\mtx{g}(Z_0)\big)\big(\mtx{h}(Z_t)-\mtx{h}(Z_0)\big) \,\big|\,Z_0=z\right] \\
&\quad= \lim_{t\downarrow 0} \frac{1}{t} \trace \E\left[ \mtx{f}_t\mtx{g}_t\mtx{h}_t - \mtx{f}_t\mtx{g}_t\mtx{h}_0 -\mtx{f}_t\mtx{g}_0\mtx{h}_t + \mtx{f}_t\mtx{g}_0\mtx{h}_0 -\mtx{f}_0\mtx{g}_t\mtx{h}_t + \mtx{f}_0\mtx{g}_t\mtx{h}_0 + \mtx{f}_0\mtx{g}_0\mtx{h}_t - \mtx{f}_0\mtx{g}_0\mtx{h}_0 \,\big|\, Z_0 = z\right]\\
&\quad = \lim_{t\downarrow 0} \frac{1}{t} \trace \E\big[ \big(\mtx{f}_t\mtx{g}_t\mtx{h}_t - \mtx{f}_0\mtx{g}_0\mtx{h}_0\big) - \big((\mtx{f}_t\mtx{g}_t - \mtx{f}_0\mtx{g}_0)\mtx{h}_0\big) - \big((\mtx{h}_t\mtx{f}_t - \mtx{h}_0\mtx{f}_0)\mtx{g}_0\big)  + \big((\mtx{f}_t - \mtx{f}_0)\mtx{g}_0\mtx{h}_0\big) \\
&\qquad\qquad\qquad\qquad  -\ \big((\mtx{g}_t\mtx{h}_t - \mtx{g}_0\mtx{h}_0)\mtx{f}_0\big) + \big((\mtx{g}_t - \mtx{g}_0)\mtx{h}_0\mtx{f}_0\big) + \big((\mtx{h}_t - \mtx{h}_0)\mtx{f}_0\mtx{g}_0\big) \,\big|\,Z_0=z \big]\\
&\quad= \trace\big[ \mL(\mtx{f}\mtx{g}\mtx{h})(z) - \mL(\mtx{f}\mtx{g})(z)\mtx{h}(z) - \mL(\mtx{h}\mtx{f})(z)\mtx{g}(z) - \mL(\mtx{g}\mtx{h})(z)\mtx{f}(z) \\
& \qquad\qquad\quad +\ \mL(\mtx{f})(z)\mtx{g}(z)\mtx{h}(z) + \mL(\mtx{g})(z)\mtx{h}(z)\mtx{f}(z) + \mL(\mtx{h})(z)\mtx{f}(z)\mtx{g}(z)\big]. 
\end{align*}
We have applied the cyclic property of the trace.
Using the reversibility~\eqref{eqn:reversibility_2} of the Markov process
and the zero-mean property~\eqref{eqn:mean_zero} of the infinitesimal generator,
we have
\begin{align*}
& \E_\mu \trace\left[ \mL(\mtx{f}\mtx{g}\mtx{h}) - \mL(\mtx{f}\mtx{g})\mtx{h} - \mL(\mtx{h}\mtx{f})\mtx{g} - \mL(\mtx{g}\mtx{h})\mtx{f} + \mL(\mtx{f})\mtx{g}\mtx{h}+ \mL(\mtx{g})\mtx{h}\mtx{f} + \mL(\mtx{h})\mtx{f}\mtx{g}\right]\\
&\quad= \trace\left[ \E_\mu[\mL(\mtx{f}\mtx{g}\mtx{h})] -  \E_\mu[\mL(\mtx{f}\mtx{g})\mtx{h} - \mtx{f}\mtx{g}\mL(\mtx{h})] -\E_\mu[\mL(\mtx{h}\mtx{f})\mtx{g} - \mtx{h}\mtx{f}\mL(\mtx{g})] - \E_\mu[\mL(\mtx{g}\mtx{h})\mtx{f} - \mtx{g}\mtx{h}\mL(\mtx{f})] \right] \\
&\quad= 0.
\end{align*}
This concludes the second part of the lemma.
\end{proof}

\subsection{Reversibility}
\label{sec:reversibility-pf}

In this section, we establish Proposition~\ref{prop:reversibility}, which states that
reversibility of the semigroup~\eqref{eqn:semigroup}
 on real-valued functions
is equivalent with the reversibility of the semigroup
on matrix-valued functions.  The pattern of argument
was suggested to us by Ramon van Handel, and it will
be repeated below in the proofs that certain functional
inequalities for real-valued functions are equivalent
with functional inequalities for matrix-valued functions.

\begin{proof}[Proof of Proposition~\ref{prop:reversibility}]
The implication that matrix reversibility~\eqref{eqn:reversibility_1}
for all $d \in \N$ implies scalar reversibility is obvious: just take $d = 1$.
To check the converse,
we require an elementary identity.
For all vectors $\vct{u},\vct{v}\in \mathbb{C}^d$ and all matrices $\mtx{A},\mtx{B}\in \mathbb{H}_d$,
\begin{align}
\vct{u}^*(\mtx{A}\mtx{B})\vct{v}
&= \sum_{j=1}^d (\vct{u}^*\mtx{A}\mathbf{e}_j)(\mathbf{e}_j^*\mtx{B}\vct{v}) =: \sum_{j=1}^da_j\bar{b}_j\\
&= \sum_{j=1}^d\left[\real(a_j)\real(b_j) + \imag(a_j)\imag(b_j) - \iunit\real(a_j)\imag(b_j) + \iunit\imag(a_j)\real(b_j)\right].  \label{eqn:Ramon}
\end{align}
We have defined $a_j:= \vct{u}^*\mtx{A}\mathbf{e}_j$ and $b_j:= \vct{v}^*\mtx{A}\mathbf{e}_j$ for each $j=1,\dots,d$.
As usual, $(\mathbf{e}_j : 1 \leq j \leq d)$ is the standard basis for $\CC^d$.

Now, consider two matrix-valued functions $\mtx{f},\mtx{g}:\Omega\rightarrow\mathbb{H}_d$.  Introduce the scalar functions $f_j := \vct{u}^*\mtx{f}\mathbf{e}_j$ and $g_j := \vct{v}^*\mtx{g}\mathbf{e}_j$ for each $j=1,\dots,d$. 
The definition~\eqref{eqn:semigroup} of the semigroup $(P_t)_{t\geq0}$ as an expectation ensures that 
\[\vct{u}^*(P_t\mtx{f})\mathbf{e}_j = P_tf_j = P_t(\real(f_j)) + \iunit\,P_t(\imag(f_j)) = \real(P_t f_j) + \iunit \imag(P_tf_j).\]
The parallel statement holds for $\vct{v}^*(P_t\mtx{g})\mathbf{e}_j$.  
Therefore, we can use formula \eqref{eqn:Ramon} to compute that 
\begin{align*}
& \vct{u}^*\E_\mu [(P_t\mtx{f}) \, \mtx{g}]\vct{v} \\
&\quad = \sum_{j=1}^d \E_\mu [\vct{u}^*(P_t\mtx{f})\mathbf{e}_j\mathbf{e}_j^*\mtx{g}\vct{v}] = \sum_{j=1}^d \E_\mu [(P_tf_j)\,\bar{g}_j]\\
&\quad = \sum_{j=1}^d\E_\mu \left[(P_t\real(f_j))\real(g_j) + (P_t\imag(f_j))\imag(g_j) - \iunit(P_t\real(f_j))\imag(g_j) + \iunit(P_t\imag(f_j))\real(g_j)\right]\\
&\quad = \sum_{j=1}^d\E_\mu \left[\real(f_j)(P_t\real(g_j)) + \imag(f_j)(P_t\imag(g_j)) - \iunit\real(f_j)(P_t\imag(g_j)) + \iunit\imag(f_j)(P_t\real(g_j))\right]\\
&\quad= \sum_{j=1}^d \E_\mu [f_j\,(P_t\bar{g}_j)] = \sum_{j=1}^d \E_\mu [\vct{u}^*\mtx{f}\mathbf{e}_j\mathbf{e}_j^*(P_t\mtx{g})\vct{v}]
= \vct{u}^*\E_\mu [\mtx{f} \, (P_t\mtx{g})]\vct{v}.
\end{align*}
The matrix identity \eqref{eqn:reversibility_1} follows immediately because $\vct{u},\vct{v}\in \mathbb{C}^d$
are arbitrary.
\end{proof}

\subsection{Dimension reduction}

The following lemma explains how to relate the carr{\'e} du champ operator of a matrix-valued function to the carr{\'e} du champ operators of some scalar functions. It will help us transform the scalar Poincar{\'e} inequality and the scalar Bakry--{\'E}mery criterion to their matrix equivalents.

\begin{lemma}[Dimension reduction of carr{\'e} du champ]\label{lem:dimension_reduction}
Let $(P_t)_{t \geq 0}$ be the semigroup defined in~\eqref{eqn:semigroup}.
The carr{\'e} du champ operator $\Gamma$ and the iterated carr{\'e} du champ operator $\Gamma_2$ satisfy
\begin{align}
\vct{u}^*\Gamma(\mtx{f})\vct{u} &= \sum_{j=1}^d\left(\Gamma\big(\real(\vct{u}^*\mtx{f}\mathbf{e}_j)\big) + \Gamma\big(\imag(\vct{u}^*\mtx{f}\mathbf{e}_j)\big)\right); \label{eqn:scalar_gamma}\\
\vct{u}^*\Gamma_2(\mtx{f})\vct{u} &= \sum_{j=1}^d\left(\Gamma_2\big(\real(\vct{u}^*\mtx{f}\mathbf{e}_j)\big) + \Gamma_2\big(\imag(\vct{u}^*\mtx{f}\mathbf{e}_j)\big)\right). \label{eqn:scalar_gamma2}
\end{align} 
These formulae hold for all $d \in \N$, for all suitable functions $\mtx{f}:\Omega\to \mathbb{H}_d$, and for all vectors $\vct{u}\in\mathbb{C}^d$. \end{lemma}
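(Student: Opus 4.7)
The plan is to leverage the fact that the semigroup~\eqref{eqn:semigroup} is defined as a coordinatewise conditional expectation, so $P_t$ and its generator $\mL$ act entrywise on matrix-valued functions. This allows us to extend $\Gamma$ and $\Gamma_2$ from real-valued to complex-valued scalar functions via the same formulas~\eqref{eqn:definition_Gamma} and~\eqref{eqn:definition_Gamma2}, and these extensions are \emph{complex-bilinear} and \emph{symmetric} in their two arguments (both properties are immediate from the definitions, since the products $fg$ commute for scalars).

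First I would establish the entrywise reduction
\begin{equation*}
\Gamma(\mtx{f},\mtx{g})_{jk} \;=\; \sum_{\ell=1}^d \Gamma(f_{j\ell},\, g_{\ell k}),
\qquad
\Gamma_2(\mtx{f},\mtx{g})_{jk} \;=\; \sum_{\ell=1}^d \Gamma_2(f_{j\ell},\, g_{\ell k}),
\end{equation*}
where $f_{j\ell}$ and $g_{\ell k}$ are the complex scalar entries of $\mtx{f}$ and $\mtx{g}$. Both identities follow by taking the $(j,k)$-entry of the defining expressions~\eqref{eqn:definition_Gamma} and~\eqref{eqn:definition_Gamma2}, using $(\mtx{f}\mtx{g})_{jk} = \sum_\ell f_{j\ell}g_{\ell k}$, and pulling the entrywise $\mL$ through the sum.

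Next, using the Hermiticity of $\mtx{f}$, which gives $f_{\ell k} = \overline{f_{k\ell}}$, together with the complex-bilinearity of the scalar $\Gamma$, I would compute
\begin{equation*}
\vct{u}^*\Gamma(\mtx{f})\vct{u}
 \;=\; \sum_{j,k,\ell} \overline{u_j}\, u_k\, \Gamma(f_{j\ell},\, \overline{f_{k\ell}})
 \;=\; \sum_{\ell=1}^d \Gamma(h_\ell,\, \overline{h_\ell}),
\qquad h_\ell \;:=\; \vct{u}^*\mtx{f}\mathbf{e}_\ell \;=\; \sum_j \overline{u_j}\, f_{j\ell}.
\end{equation*}
Writing $h_\ell = a_\ell + \iunit b_\ell$ with $a_\ell = \real(\vct{u}^*\mtx{f}\mathbf{e}_\ell)$ and $b_\ell = \imag(\vct{u}^*\mtx{f}\mathbf{e}_\ell)$, bilinearity expands
\begin{equation*}
\Gamma(h_\ell,\overline{h_\ell}) \;=\; \Gamma(a_\ell) + \Gamma(b_\ell) + \iunit\bigl(\Gamma(b_\ell, a_\ell) - \Gamma(a_\ell, b_\ell)\bigr) \;=\; \Gamma(a_\ell) + \Gamma(b_\ell),
\end{equation*}
where the imaginary cross terms cancel by symmetry of $\Gamma$. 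This yields~\eqref{eqn:scalar_gamma}, and the identical argument with $\Gamma_2$ replacing $\Gamma$ proves~\eqref{eqn:scalar_gamma2}.

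The main technical obstacle is the entrywise reduction for $\Gamma_2$, since it involves two nested applications of $\mL$ through matrix products and one must carefully exploit that $\mL$ commutes with taking matrix entries. The symmetry $\Gamma_2(f,g) = \Gamma_2(g,f)$, inherited from the symmetry of $\Gamma$, is essential for the cancellation of imaginary cross terms; without it, one would not obtain the clean sum of real-function carr{\'e}-du-champ quantities on the right-hand side.
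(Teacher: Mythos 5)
Your proof is correct and follows essentially the same route as the paper: both arguments exploit that $\mL$ acts entrywise (so $\vct{u}^*\Gamma(\mtx{f})\vct{u}$ reduces, via the intermediate matrix index — your entrywise reduction, the paper's insertion of $\sum_j \mathbf{e}_j\mathbf{e}_j^*$ — to the scalar quantities $\Gamma(h_j,\overline{h_j})$ with $h_j = \vct{u}^*\mtx{f}\mathbf{e}_j$), and then split into real and imaginary parts, where your symmetry-based cancellation of the cross terms is just a repackaging of the paper's explicit computation. The same remark applies to the $\Gamma_2$ identity, which the paper also handles by the analogous argument.
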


\begin{proof}
The definition~\eqref{eqn:definition_Gamma} of $\mL$ implies that 
\[\vct{u}^*\mL(\mtx{f})\vct{v} = \mL(\vct{u}^*\mtx{f}\vct{v}) = \mL(\real(\vct{u}^*\mtx{f}\vct{v})) + \iunit\cdot \mL(\imag(\vct{u}^*\mtx{f}\vct{v})).\] 
Introduce the scalar function $f_j := \vct{u}^*\mtx{f}\mathbf{e}_j$ for each $j=1,\dots,d$. 
Then we can use the definition~\eqref{eqn:definition_Gamma} of $\Gamma$ and formula \eqref{eqn:Ramon} to compute that 
\begin{align*}
\vct{u}^*\Gamma(\mtx{f})\vct{u} &= \frac{1}{2}\left(\vct{u}^*\mL(\mtx{f}^2)\vct{u} - \vct{u}^*\mtx{f}\mL(\mtx{f})\vct{u} - \vct{u}^*\mL(\mtx{f})\mtx{f}\vct{u}\right)\\
&= \frac{1}{2}\sum_{j=1}^d\left(\vct{u}^*\mL(\mtx{f}\mathbf{e}_j\mathbf{e}_j^*\mtx{f})\vct{u} - \vct{u}^*\mtx{f}\mathbf{e}_j\mathbf{e}_j^*\mL(\mtx{f})\vct{u} - \vct{u}^*\mL(\mtx{f})\mathbf{e}_j\mathbf{e}_j^*\mtx{f}\vct{u}\right)\\
&= \frac{1}{2}\sum_{j=1}^d\left(\mL(f_j\,\bar{f}_j) - f_j\,\mL(\bar{f}_j) - \mL(f_j)\,\bar{f}_j\right)\\
&= \frac{1}{2}\sum_{j=1}^d\left(\mL(\real(f_j)^2) + \mL(\imag(f_j)^2) - 2\real(f_j)\,\mL(\real(f_j)) - 2\imag(f_j)\,\mL(\imag(f_j)) \right)\\
&= \sum_{j=1}^d\left(\Gamma(\real(f_j)) + \Gamma(\imag(f_j))\right).
\end{align*}
This is the first identity~\eqref{eqn:scalar_gamma}.   The second identity \eqref{eqn:scalar_gamma2} follows from a similar argument based on the definition~\eqref{eqn:definition_Gamma2} of $\Gamma_2$ and the relation \eqref{eqn:scalar_gamma}.
\end{proof}

\subsection{Equivalence of scalar and matrix inequalities}
\label{sec:scalar_matrix}

In this section, we verify Proposition~\ref{prop:poincare_equiv} and Proposition~\ref{prop:BE_equiv}.
These results state that functional inequalities for the action of the semigroup~\eqref{eqn:semigroup}
on real-valued functions induce functional inequalities for its action on matrix-valued functions.

\begin{proof}[Proof of Proposition~\ref{prop:poincare_equiv}]
It is evident that the validity of the matrix Poincar\'e inequality \eqref{Poincare_inequality_matrix} for all $d \in \N$ implies the scalar Poincar\'e inequality \eqref{Poincare_inequality_scalar}, which is simply the $d = 1$ case.  For the reverse implication, we invoke formula \eqref{eqn:Ramon} to learn that
\[
\vct{u}^*\mVar_\mu[\mtx{f}]\vct{u} = \sum_{j=1}^d\left(\Var_\mu\big[\real(\vct{u}^*\mtx{f}\mathbf{e}_j)\big]+\Var_\mu\big[\imag(\vct{u}^*\mtx{f}\mathbf{e}_j)\big]\right).
\]
Moreover, we can take the expectation $\E_\mu$ of formula \eqref{eqn:scalar_gamma} to obtain 
\[
\vct{u}^*\mathcal{E}(\mtx{f})\vct{u} = \sum_{j=1}^d\left(\mathcal{E}\big(\real(\vct{u}^*\mtx{f}\mathbf{e}_j)\big)+\mathcal{E}\big(\imag(\vct{u}^*\mtx{f}\mathbf{e}_j)\big)\right).
\]
Applying the scalar Poincar{\'e} inequality \eqref{Poincare_inequality_scalar} to the real scalar functions $\real(\vct{u}^*\mtx{f}\mathbf{e}_j)$ and $\imag(\vct{u}^*\mtx{f}\mathbf{e}_j)$, we obtain \[\vct{u}^*\mVar_\mu[\mtx{f}]\vct{u} \leq \alpha\cdot \vct{u}^*\mathcal{E}(\mtx{f})\vct{u}\quad \text{for all $\vct{u}\in \mathbb{C}^d$}.\]
This immediately implies the matrix Poincar{\'e} inequality \eqref{Poincare_inequality_matrix}.
\end{proof}

\begin{proof}[Proof of Proposition~\ref{prop:BE_equiv}]
It is evident that the validity of the matrix Bakry--{\'E}mery criterion~\eqref{Bakry-Emery_criterion_matrix} for all $d \in \N$ implies the validity of the scalar criterion~\eqref{Bakry-Emery_criterion_scalar}, as we only need to set $d = 1$.
To develop the reverse implication, we use Lemma~\ref{lem:dimension_reduction} to compute that
\begin{align*}
\vct{u}^*\Gamma(\mtx{f})\vct{u} &= \sum_{j=1}^d\left(\Gamma\big(\real(\vct{u}^*\mtx{f}\mathbf{e}_j)\big) + \Gamma\big(\imag(\vct{u}^*\mtx{f}\mathbf{e}_j)\big)\right)\\
&\leq c \sum_{j=1}^d\left(\Gamma_2\big(\real(\vct{u}^*\mtx{f}\mathbf{e}_j)\big) + \Gamma_2\big(\imag(\vct{u}^*\mtx{f}\mathbf{e}_j)\big)\right)\\
&= c\cdot \vct{u}^*\Gamma_2(\mtx{f})\vct{u}.
\end{align*}
The inequality is applying \eqref{Bakry-Emery_criterion_scalar} to real scalar functions $\real(\vct{u}^*\mtx{f}\mathbf{e}_j)$ and $\imag(\vct{u}^*\mtx{f}\mathbf{e}_j)$ for each $j=1,\dots,d$. 
Since $\vct{u} \in \CC^d$ is arbitrary, we immediately obtain \eqref{Bakry-Emery_criterion_matrix}.
\end{proof}

\subsection{Derivative formulas}

A standard way to establish the equivalence between the Poincar\'e inequality and the exponential ergodicity property is by studying derivatives with respect to the time parameter $t$. The following result, extending~\cite[Lemma 2.3]{ABY20:Matrix-Poincare}, calculates the derivatives of the matrix variance and the Dirichlet form along the semigroup $(P_t)_{t\geq0}$.  The result parallels the scalar case.

\begin{lemma}[Dissipation of variance and energy] \label{lem:derivative_formula}
Let $(P_t)_{t\geq 0}$ be a Markov semigroup with stationary measure $\mu$,
infinitesimal generator $\mL$, and Dirichlet form $\mathcal{E}$.
For all suitable $\mtx{f}:\Omega\rightarrow \mathbb{H}_d$,
\begin{equation}\label{eqn:variance_derivative}
\frac{\diff{} }{\diff t}\mVar_\mu[P_t\mtx{f}] = -2\mathcal{E}(\mtx{f})\quad \text{for all $t>0$}.
\end{equation}
Moreover, if the semigroup is reversible,
\begin{equation}\label{eqn:energy_derivative}
\frac{\diff{} }{\diff t}\mathcal{E}(P_t\mtx{f}) = -2\E_\mu\big[(\mL(P_t\mtx{f}))^2\big]\quad \text{for all $t>0$}.
\end{equation}
\end{lemma}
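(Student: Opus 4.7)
The plan is to compute both derivatives directly by moving the differentiation inside the expectation and applying the identities~\eqref{eqn:derivative_relation} and the Dirichlet form expressions~\eqref{eqn:Dirichlet_expression_1}--\eqref{eqn:Dirichlet_expression_2}. Throughout, the key observation is that $\Expect_{\mu}[P_t \mtx{f}] = \Expect_{\mu} \mtx{f}$ is $t$-independent, so the additive constant term in the variance~\eqref{eqn:matrix_variance} contributes nothing to the derivative. After stripping it away, the computation reduces to differentiating quadratic expressions in $P_t\mtx{f}$.

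For the first formula, I would write $\mVar_{\mu}[P_t \mtx{f}] = \E_\mu[(P_t\mtx{f})^2] - (\E_\mu \mtx{f})^2$ and differentiate the first term. Using~\eqref{eqn:derivative_relation} together with the matrix product rule (being careful because $P_t\mtx{f}$ and $\mL P_t \mtx{f}$ need not commute), I get
\[
\frac{\diff}{\diff t} \E_\mu[(P_t\mtx{f})^2] = \E_\mu\big[ (\mL P_t\mtx{f})(P_t\mtx{f}) + (P_t\mtx{f})(\mL P_t\mtx{f})\big].
\]
Comparing with formula~\eqref{eqn:Dirichlet_expression_1} for the Dirichlet form (applied with the same argument $P_t\mtx{f}$ in both slots), this is precisely $-2\,\mathcal{E}(P_t \mtx{f})$, which yields~\eqref{eqn:variance_derivative}. (No reversibility is used at this stage; only that $\mL$ is mean-zero, i.e.\ \eqref{eqn:mean_zero}, through the derivation of~\eqref{eqn:Dirichlet_expression_1}.)

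For the second formula, I would start from the reversible expression~\eqref{eqn:Dirichlet_expression_2}, which gives $\mathcal{E}(P_t\mtx{f}) = -\E_\mu[(P_t\mtx{f})\,\mL(P_t\mtx{f})]$. Differentiating with respect to $t$ and using~\eqref{eqn:derivative_relation} and the product rule,
\[
\frac{\diff}{\diff t}\mathcal{E}(P_t\mtx{f})
 = -\E_\mu\big[(\mL P_t\mtx{f})\,\mL(P_t\mtx{f}) + (P_t\mtx{f})\,\mL^2(P_t\mtx{f})\big].
\]
The second summand is handled by invoking the symmetry property~\eqref{eqn:reversibility_2} of $\mL$ with $\mtx{g} = \mL(P_t\mtx{f})$: this moves one $\mL$ across the product to give $\E_\mu[(\mL P_t\mtx{f})^2]$. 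Combining the two terms produces $-2\,\E_\mu[(\mL P_t\mtx{f})^2]$, which is exactly~\eqref{eqn:energy_derivative}.

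The main technical hurdle, rather than any clever algebra, is justifying the interchange of $\diff/\diff t$ with $\E_\mu$ and verifying that $P_t\mtx{f}$, $\mL P_t\mtx{f}$, and $\mL^2 P_t\mtx{f}$ are all integrable and lie in the appropriate domain. As indicated in the excerpt, these domain issues are routine but technical; one restricts to the algebra of \emph{suitable} functions where all such manipulations are licit and then extends by density. The only genuinely matrix-specific care needed is tracking the order of the factors in every bilinear product, since $\mtx{f}(z)$ and $(\mL\mtx{f})(z)$ generally do not commute as Hermitian matrices.
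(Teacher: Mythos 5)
Your proof is correct and follows essentially the same route as the paper's: differentiate $\E_\mu[(P_t\mtx{f})^2]$ using~\eqref{eqn:derivative_relation} and the noncommutative product rule, identify the result with~\eqref{eqn:Dirichlet_expression_1}, and for the energy start from the reversible expression~\eqref{eqn:Dirichlet_expression_2} and use the symmetry of $\mL$ to convert $\E_\mu[(P_t\mtx{f})\,\mL^2(P_t\mtx{f})]$ into $\E_\mu[(\mL P_t\mtx{f})^2]$. Note that you correctly arrive at $-2\,\mathcal{E}(P_t\mtx{f})$, which is what the paper's own proof (and its later application of the lemma) gives; the $-2\,\mathcal{E}(\mtx{f})$ in the lemma statement is evidently a typo.
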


\begin{proof}
By the definition~\eqref{eqn:matrix_variance} of the matrix variance and the stationarity property $\Expect_{\mu} P_t = \Expect_\mu$, we can calculate that 
\begin{align*}
\frac{\diff{} }{\diff t}\mVar_\mu[P_t\mtx{f}] = \frac{\diff{} }{\diff t}\big[\E_\mu (P_t\mtx{f})^2 - (\E_\mu\mtx{f})^2\big]
=\E_\mu\big[\mL(P_t\mtx{f})(P_t\mtx{f}) + (P_t\mtx{f})\mL(P_t\mtx{f})\big]
= -2\mathcal{E}(P_t\mtx{f}).
\end{align*}
The second equality above uses the derivative relation \eqref{eqn:derivative_relation} for the generator,
and the third equality is the expression~\eqref{eqn:Dirichlet_expression_1} for the Dirichlet form. 
Similarly, we can calculate that 
\begin{align*}
\frac{\diff{} }{\diff t} \mathcal{E}(P_t\mtx{f})
&= - \frac{\diff{} }{\diff t} \E_\mu\big[(P_t\mtx{f})\mL(P_t\mtx{f})\big]\\
&= - \E_\mu\big[\mL(P_t\mtx{f})\mL(P_t\mtx{f}) + (P_t\mtx{f})\mL(\mL(P_t\mtx{f}))\big]
= - 2\E_\mu\big[(\mL(P_t\mtx{f}))^2\big].
\end{align*}
The first equality is \eqref{eqn:Dirichlet_expression_2}. The last equality
holds because $\mL$ is symmetric.
\end{proof}

The matrix Poincar\'e inequality \eqref{eqn:matrix_Poincare} allows us to convert the derivative formulas
in Lemma~\ref{lem:derivative_formula} into differential inequalities for matrix-valued functions.
The next lemma gives the solution to these differential inequalities. 

\begin{lemma}[Differential matrix inequality] \label{lem:matrix_differential_inequality}
Assume that $\mtx{A}:[0,+\infty) \rightarrow \mathbb{H}_d$ is a differentiable
matrix-valued function that satisfies the differential inequality
\[\frac{\diff{} }{\diff t}\mtx{A}(t) \preccurlyeq \nu \cdot \mtx{A}(t) \quad \text{for all $t > 0$,}\]
where $\nu \in \mathbb{R}$ is a constant. Then 
\[\mtx{A}(t) \preccurlyeq \econst^{\nu t}\cdot\mtx{A}(0)\quad \text{for all $t\geq 0$}. \]
\end{lemma}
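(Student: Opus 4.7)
The plan is to mimic the classical scalar Gr\"onwall argument, using the standard trick of multiplying by an integrating factor, but then reducing the matrix statement to a family of scalar statements by testing against vectors.

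First, I introduce the auxiliary matrix-valued function $\mtx{B}(t) := \econst^{-\nu t}\mtx{A}(t)$ on $[0,+\infty)$. Since $\mtx{A}(t)\in\mathbb{H}_d$ and the scalar factor $\econst^{-\nu t}$ is real, $\mtx{B}(t)$ is Hermitian. Differentiating and using the hypothesis yields
\begin{equation*}
\frac{\diff{}}{\diff t}\mtx{B}(t) = \econst^{-\nu t}\left(\frac{\diff{}}{\diff t}\mtx{A}(t) - \nu\,\mtx{A}(t)\right) \preccurlyeq \mtx{0} \quad\text{for all $t>0$,}
\end{equation*}
because $\econst^{-\nu t}>0$ and the semidefinite order is preserved under multiplication by a positive scalar.

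Next, to conclude from the pointwise semidefinite decrease that $\mtx{B}(t)\preccurlyeq \mtx{B}(0)$ for all $t\geq 0$, I reduce to the scalar case. Fix an arbitrary vector $\vct{u}\in\CC^d$ and set $b_{\vct{u}}(t) := \vct{u}^* \mtx{B}(t)\vct{u}\in\R$. This is a differentiable real-valued function with $b_{\vct{u}}'(t) = \vct{u}^*\bigl(\tfrac{\diff{}}{\diff t}\mtx{B}(t)\bigr)\vct{u}\leq 0$ for all $t>0$, so $b_{\vct{u}}$ is nonincreasing on $[0,+\infty)$. Hence $b_{\vct{u}}(t)\leq b_{\vct{u}}(0)$ for every $t\geq 0$. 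Since $\vct{u}$ was arbitrary, this is exactly $\mtx{B}(t)\preccurlyeq \mtx{B}(0)$, i.e., $\econst^{-\nu t}\mtx{A}(t)\preccurlyeq \mtx{A}(0)$. Multiplying both sides by $\econst^{\nu t}>0$ gives the claimed bound $\mtx{A}(t)\preccurlyeq \econst^{\nu t}\mtx{A}(0)$.

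There is no real obstacle here: the only subtlety is that one must not try to ``exponentiate'' the differential inequality at the matrix level (which would require commutativity assumptions), and instead should either use an integrating factor or test against vectors to reduce to the scalar Gr\"onwall lemma. Both devices are combined in the argument above, making the proof a few lines long.
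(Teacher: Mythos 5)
Your proof is correct and follows essentially the same route as the paper: both introduce the integrating factor $\mtx{B}(t) = \econst^{-\nu t}\mtx{A}(t)$, observe $\frac{\diff{}}{\diff t}\mtx{B}(t) \preccurlyeq \mtx{0}$, and deduce $\mtx{B}(t) \preccurlyeq \mtx{B}(0)$. The only difference is cosmetic — the paper justifies the last step by noting that integration preserves the semidefinite order, while you spell out the same fact by testing against an arbitrary vector $\vct{u}$.
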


\begin{proof}
Consider the matrix-valued function $\mtx{B}(t):= \econst^{-\nu t} \mtx{A}(t)$ for $t \geq 0$.
Then $\mtx{B}(0) = \mtx{A}(0)$, and 
\[\frac{\diff{} }{\diff t}\mtx{B}(t) = \econst^{-\nu t}\frac{\diff{} }{\diff t}\mtx{A}(t) - \nu \econst^{-\nu t} \mtx{A}(t)\preccurlyeq \mtx{0}. \]
Since integration preserves the semidefinite order,
\[\econst^{-\nu t}\mtx{A}(t) = \mtx{B}(t) \preccurlyeq  \mtx{B}(0) = \mtx{A}(0).\]
Multiply by $\econst^{\nu t}$ to arrive at the stated result.
\end{proof}

\subsection{Consequences of the Poincar\'e inequality} 
\label{sec:equivalence_Poincare}

This section contains the proof of Proposition~\ref{prop:matrix_poincare}, the equivalence between the matrix Poincar\'e inequality and exponential ergodicity properties. This proof is adapted from its scalar analog~\cite[Theorem 2.18]{van550probability}.

\begin{proof}[Proof of Proposition~\ref{prop:matrix_poincare}]
\textit{Proof that \eqref{Poincare_inequality} $\Rightarrow$ \eqref{variance_convergence}.}
To see that the matrix Poincar{\'e} inequality~\eqref{Poincare_inequality}
implies exponential ergodicity~\eqref{variance_convergence} of the variance,
combine Lemma~\ref{lem:derivative_formula} with the matrix Poincar{\'e}
inequality to obtain a differential inequality:
\[\frac{\diff{} }{\diff t}\mVar_\mu[P_t\mtx{f}] = -2\mathcal{E}(P_t\mtx{f}) \preccurlyeq -\frac{2}{\alpha} \mVar_\mu[P_t\mtx{f}].\]
Lemma~\ref{lem:matrix_differential_inequality} gives the solution: 
\[\mVar_\mu[P_t\mtx{f}] \preccurlyeq \econst^{-2t/\alpha} \mVar_\mu[P_0\mtx{f}] = \econst^{-2t/\alpha} \mVar_\mu[\mtx{f}].\]
This is the ergodicity of the variance.

\textit{Proof that \eqref{variance_convergence} $\Rightarrow$ \eqref{Poincare_inequality}.}
To obtain the matrix Poincar{\'e} inequality~\eqref{Poincare_inequality} from exponential ergodicity~\eqref{variance_convergence}
of the variance, use the derivative~\eqref{eqn:variance_derivative} of the variance
and the fact that $P_0$ is the identity map to see that
\[\mathcal{E}(\mtx{f}) = \lim_{t\downarrow0} \frac{\mVar_\mu[\mtx{f}]- \mVar_\mu[P_t\mtx{f}]}{2t} \succcurlyeq \lim_{t\downarrow0} \frac{1-\econst^{-2t/\alpha}}{2t} \cdot \mVar_\mu[\mtx{f}] = \frac{1}{\alpha} \mVar_\mu[\mtx{f}].\]
The inequality follows from \eqref{variance_convergence}.

\textit{Proof that \eqref{Poincare_inequality} $\Rightarrow$ \eqref{energy_convergence} under reversibility.}
Next, we argue that the matrix Poincar\'e inequality \eqref{Poincare_inequality}
implies exponential ergodicity~\eqref{energy_convergence} of the energy, assuming that the semigroup is reversible.
In this case, the zero-mean property \eqref{eqn:mean_zero} implies that
$\E_\mu[\mtx{g}\mL(\mtx{f})] = \E_\mu[(\mtx{g}-\E_\mu\mtx{g})\mL(\mtx{f})]$ and $\E_\mu[\mL(\mtx{f})\mtx{g}] = \E_\mu[\mL(\mtx{f})(\mtx{g}-\E_\mu\mtx{g})]$ for all suitable $\mtx{f},\mtx{g}$.  Therefore,
\begin{align*}
\mathcal{E}(\mtx{f}) &= - \frac{1}{2}\E_\mu\left[\mtx{f}\mL(\mtx{f})+\mL(\mtx{f})\mtx{f}\right] = - \frac{1}{2}\E_\mu\left[(\mtx{f}-\E_\mu\mtx{f})\mL(\mtx{f}) + \mL(\mtx{f})(\mtx{f}-\E_\mu\mtx{f})\right]\\
&\preccurlyeq \frac{1}{2\alpha}\E_\mu \big[(\mtx{f}-\E_\mu\mtx{f})^2\big]  + \frac{\alpha}{2}\E_\mu \big[\mL(\mtx{f})^2\big] \preccurlyeq \frac{1}{2} \mathcal{E}(\mtx{f}) + \frac{\alpha}{2}\E_\mu \big[\mL(\mtx{f})^2\big].
\end{align*}
The first inequality holds because $\mtx{A}\mtx{B} + \mtx{B}\mtx{A}\preccurlyeq \mtx{A}^2+\mtx{B}^2$ for all $\mtx{A},\mtx{B}\in \mathbb{H}_d$, and the second follows from the matrix Poincar{\'e} inequality~\eqref{Poincare_inequality}.  Rearranging, we obtain the relation $\mathcal{E}(\mtx{f})\preccurlyeq \alpha \E_\mu [\mL(\mtx{f})^2]$ for all suitable $\mtx{f}$.  Combine this fact with the derivative formula \eqref{eqn:energy_derivative} to reach
\[\frac{\diff{} }{\diff t} \mathcal{E}(P_t\mtx{f}) = - 2\E_\mu\big[\mL(P_t\mtx{f})^2\big] \preccurlyeq - \frac{2}{\alpha} \mathcal{E}(P_t\mtx{f}).\]
Lemma~\ref{lem:matrix_differential_inequality} gives the solution to the differential inequality:
\[\mathcal{E}(P_t\mtx{f})\preccurlyeq \econst^{-2t/\alpha} \mathcal{E}(P_0\mtx{f}) = \econst^{-2t/\alpha} \mathcal{E}(\mtx{f}).\]
This is the ergodicity of energy.

\textit{Proof that \eqref{energy_convergence} $\Rightarrow$ \eqref{Poincare_inequality} under ergodicity.}
To see that exponential ergodicity~\eqref{energy_convergence} of the energy implies the matrix Poincar\'e inequality \eqref{Poincare_inequality} when the semigroup is ergodic, we combine \eqref{energy_convergence} with the derivative~\eqref{eqn:variance_derivative}
of the Dirichlet form to obtain
\[\frac{\diff{} }{\diff t}\mVar_\mu[P_t\mtx{f}] = -2\mathcal{E}(P_t\mtx{f}) \succcurlyeq -2\econst^{-2t/\alpha}\mathcal{E}(\mtx{f}).\]
Using the ergodicity assumption~\eqref{eqn:ergodicity} on the semigroup, we have 
\begin{align*} 
\mVar_\mu[\mtx{f}] &= \mVar_\mu[P_0\mtx{f}] - \lim_{t\rightarrow\infty}\mVar_\mu[P_t\mtx{f}] = -\int_0^\infty \frac{\diff{} }{\diff t}\mVar_\mu[P_t\mtx{f}] \idiff t \\
&\preccurlyeq 2\int_0^\infty\econst^{-2t/\alpha} \idiff t \cdot \mathcal{E}(\mtx{f}) = \alpha \mathcal{E}(\mtx{f}).
\end{align*}
The first equality follows from the ergodicity relation 
\[\lim_{t\rightarrow\infty}\mVar_\mu[P_t\mtx{f}] = \lim_{t\rightarrow\infty}\E_\mu(P_t\mtx{f}-\E_\mu\mtx{f})^2 = \mtx{0}.\]
This completes the proof of Proposition~\ref{prop:matrix_poincare}.
\end{proof}

\subsection{Equivalence result for local Poincar\'e inequality}
\label{sec:equivalence_local_Poincare}
 
Proposition~\ref{prop:local_Poincare} states that the matrix Bakry--\'Emery criterion, the local Poincar\'e inequality, and the local ergodicity of the carr\'e du champ operator are equivalent with each other.  This section is dedicated to the proof, which is modeled on the scalar argument~\cite[Theorem 2.36]{van550probability}.  

\begin{proof}[Proof of Proposition~\ref{prop:local_Poincare}]
\textit{Proof that \eqref{Bakry-Emery_criterion} $\Rightarrow$ \eqref{local_ergodicity}.}
Let us show that the matrix Bakry--\'Emery criterion \eqref{Bakry-Emery_criterion} implies local ergodicity~ \eqref{local_ergodicity} of the carr\'e du champ operator.  Given any suitable $\mtx{f}$ and any $t\geq 0$, construct the function $\mtx{A}(s) := P_{t-s}\Gamma(P_s\mtx{f})$ for $s\in[0,t]$. Then we have
\begin{align*}
\frac{\diff{} }{\diff s} \mtx{A}(s) &= - \mL P_{t-s} \Gamma(P_s\mtx{f}) + P_{t-s}\Gamma(\mL P_s\mtx{f},P_s\mtx{f}) + P_{t-s}\Gamma(P_s\mtx{f},\mL P_s\mtx{f}) \\
&= - P_{t-s}\big( \mL \Gamma(P_s\mtx{f}) - \Gamma(\mL P_s\mtx{f},P_s\mtx{f}) -\Gamma(P_s\mtx{f},\mL P_s\mtx{f})\big) \\
&= -2 P_{t-s} \Gamma_2(P_s\mtx{f})\\
&\preccurlyeq -2c^{-1} P_{t-s}\Gamma(P_s\mtx{f})\\
&= -2c^{-1} \mtx{A}(s).
\end{align*}
The inequality follows from \eqref{Bakry-Emery_criterion}.   Apply Lemma~\ref{lem:matrix_differential_inequality} to reach to bound $\mtx{A}(t)\preccurlyeq \econst^{-2t/c} \mtx{A}(0)$.  This yields \eqref{local_ergodicity} because $\mtx{A}(t) = \Gamma(P_t\mtx{f})$ and $\mtx{A}(0) = P_t\Gamma(\mtx{f})$.

\textit{Proof that \eqref{local_ergodicity} $\Rightarrow$ \eqref{local_Poincare}.}
Next, we argue that local ergodicity of the carr\'e du champ operator \eqref{local_ergodicity} implies the local matrix Poincar\'e inequality \eqref{local_Poincare}.  Construct the function $\mtx{B}(s) := P_{t-s}((P_s\mtx{f})^2)$ for $s\in[0,t]$. Taking the derivative with respect to $s$ gives 
\begin{align*}
\frac{\diff{} }{\diff s} \mtx{B}(s) =&\ - \mL P_{t-s} ((P_s\mtx{f})^2) + P_{t-s}(\mL(P_s\mtx{f})P_s\mtx{f}) + P_{t-s}(P_s\mtx{f}\mL(P_s\mtx{f})) \\
=&\ - P_{t-s}\left( \mL ((P_s\mtx{f})^2) - \mL(P_s\mtx{f})P_s\mtx{f} - P_s\mtx{f}\mL(P_s\mtx{f})\right) \\
=&\ -2 P_{t-s} \Gamma(P_s\mtx{f})\\
\succcurlyeq&\  -2\econst^{-2s/c} P_{t-s}P_s\Gamma(\mtx{f})\\
=&\ -2\econst^{-2s/c} P_t\Gamma(\mtx{f}).
\end{align*}
Therefore,
\[P_t(\mtx{f}^2) - (P_t\mtx{f})^2 = \mtx{B}(0) - \mtx{B}(t) \preccurlyeq 2\int_0^t\econst^{-2s/c}\idiff s \cdot P_t\Gamma(\mtx{f}) = c \, (1-\econst^{-2t/c})\,P_t\Gamma(\mtx{f}).  \]
This is the local ergodicity property.

\textit{Proof that \eqref{local_Poincare} $\Rightarrow$ \eqref{Bakry-Emery_criterion}.}
Last, we show that the local matrix Poincar\'e inequality \eqref{local_Poincare} implies the matrix Bakry--\'Emery criterion \eqref{Bakry-Emery_criterion}.  Construct the function $\mtx{C}(t) := P_t(\mtx{f}^2) - (P_t\mtx{f})^2 - c\,(1-\econst^{-2t/c})\,P_t\Gamma(\mtx{f})$. Evidently, $\mtx{C}(0) = \mtx{0}$, and the local Poincar{\'e} inequality \eqref{local_Poincare} implies that $\mtx{C}(t)\preccurlyeq \mtx{0}$ for all $t\geq 0$.  Now, the first derivative satisfies 
\begin{align*}
\frac{\diff{} }{\diff t}\bigg|_{t=0} \mtx{C}(t)
	= \mL(\mtx{f}^2) -\mL(\mtx{f})\mtx{f} - \mtx{f}\mL(\mtx{f}) - 2\Gamma(\mtx{f}) = \mtx{0}.
\end{align*}
The second derivative takes the form
\begin{align*}
\frac{\diff{^2}}{\diff t^2}\bigg|_{t=0} \mtx{C}(t) &= \mL^2(\mtx{f}^2)-\mL^2(\mtx{f})\mtx{f} - \mtx{f}\mL^2(\mtx{f}) - 2(\mL\mtx{f})^2 + 4c^{-1}\Gamma(\mtx{f}) - 4\mL\Gamma(\mtx{f}) \\
&= 4c^{-1}\left(\Gamma(\mtx{f}) -  c\Gamma_2(\mtx{f})\right).
\end{align*}
Therefore,
\[\Gamma(\mtx{f}) -  c\Gamma_2(\mtx{f}) = \frac{c}{4}\frac{\diff{^2}}{\diff t^2} \bigg|_{t=0} \mtx{C}(t) = \frac{c}{2}\lim_{t\rightarrow 0}\frac{\mtx{C}(t)}{t^2} \preccurlyeq \mtx{0}.\]
This verifies the validity of the matrix Bakry--\'Emery criterion with constant $c$.
\end{proof}

\section{From curvature conditions to matrix moment inequalities}
\label{sec:trace_to_moment}

The main results of this paper, Theorems~\ref{thm:polynomial_moment} and~\ref{thm:exponential_moment},
demonstrate that the Bakry--{\'E}mery criterion~\eqref{Bakry-Emery} leads to
trace moment inequalities for random matrices.
This section is dedicated to the proofs of these theorems.
These arguments appear to be new, even in the scalar setting,
but see~\cite{Led92:Heat-Semigroup,Sch99:Curvature-Nonlocal}
for some precedents.

\subsection{Overview}

Let $(P_t)_{t \geq 0}$ be a reversible, ergodic semigroup
acting on matrix-valued functions. Assume that the semigroup satisfies a Bakry--{\'E}mery criterion~\eqref{Bakry-Emery},
so Proposition~\ref{prop:local_Poincare} implies that it is locally ergodic.
Without loss of generality, we may assume that the matrix-valued function $\mtx{f}$
is zero-mean: $\E_\mu\mtx{f}=\mtx{0}$.

For a standard matrix function $\phi$, the basic idea is to estimate a trace moment
of the form $\E_\mu\trace[\mtx{f}\,\varphi(\mtx{f})]$ via a classic semigroup argument:
\[\E_\mu\trace[\mtx{f}\,\varphi(\mtx{f})] =\E_\mu\trace[P_0(\mtx{f})\,\varphi(\mtx{f})] = \lim_{t\rightarrow\infty}\E_\mu\trace[P_t(\mtx{f})\,\varphi(\mtx{f})] - \int_0^\infty\frac{\diff{}}{\diff t}\E_\mu\trace[P_t(\mtx{f})\,\varphi(\mtx{f})]\idiff t.\]
By ergodicity \eqref{eqn:ergodicity}, $\lim_{t\rightarrow\infty}\E_\mu\trace[P_t(\mtx{f})\,\varphi(\mtx{f})] = \E_\mu\trace[(\E_\mu\mtx{f})\,\varphi(\mtx{f})] = 0$.
In the second term on the right-hand side, the time derivative places the infinitesimal generator $\mL$ in the integrand,
which then becomes 
\begin{equation} \label{eqn:overview_gamma}
-\E_\mu\trace[\mL(P_t\mtx{f})\,\varphi(\mtx{f})] = \E_\mu\trace \Gamma(P_t\mtx{f},\varphi(\mtx{f})). 
\end{equation}
This familiar formula is the starting point for our method.

To control the trace of the carr{\'e} du champ, we employ the following fundamental lemma,
which is related to the Stroock--Varopoulos
inequality~\cite{Str84:Introduction-Theory,Var85:Hardy-Littlewood-Theory}.

\begin{lemma}[Chain rule inequality]\label{lem:key_Gamma}
Let $\varphi:\mathbb{R}\rightarrow\mathbb{R}$ be a function such that $\psi := |\varphi'|$ is convex.
For all suitable $\mtx{f},\mtx{g}:\Omega\rightarrow \mathbb{H}_d$, 
\begin{equation*}\label{eqn:general_Gamma}
\E_\mu \trace\Gamma(\mtx{g},\varphi(\mtx{f}))\leq \Big(\E_\mu\trace \left[\Gamma(\mtx{f})\,\psi(\mtx{f})\right]\cdot\E_\mu\trace \left[\Gamma(\mtx{g})\,\psi(\mtx{f})\right]\Big)^{1/2}.
\end{equation*}
\end{lemma}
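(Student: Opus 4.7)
The chain rule inequality is a matrix analogue of the scalar diffusion identity $\Gamma(g,\varphi(f)) = \varphi'(f)\,\Gamma(g,f)$. Since we do not assume diffusion, I will work from the limit representation of $\Gamma$ (Proposition~\ref{prop:Gamma_property}\eqref{limit_formula}) together with a divided-difference argument. Applying that limit formula and Fubini,
\[
\E_\mu\trace\Gamma(\mtx{g},\varphi(\mtx{f}))
= \lim_{t\downarrow 0}\tfrac{1}{2t}\,\E\,\trace\bigl[(\mtx{g}(Z_t)-\mtx{g}(Z_0))\,(\varphi(\mtx{f}(Z_t))-\varphi(\mtx{f}(Z_0)))\bigr],
\]
where the outer expectation runs over the stationary pair $(Z_0,Z_t)$ with $Z_0\sim\mu$. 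Abbreviate $\Delta\mtx{g} := \mtx{g}(Z_t)-\mtx{g}(Z_0)$, $\Delta\mtx{f} := \mtx{f}(Z_t)-\mtx{f}(Z_0)$, and $\mtx{f}_s := (1-s)\mtx{f}(Z_0) + s\mtx{f}(Z_t) \in \mathbb{H}_d$. Differentiating the standard matrix function $\varphi$ along the Hermitian segment,
\[
\varphi(\mtx{f}(Z_t)) - \varphi(\mtx{f}(Z_0)) = \int_0^1 D\varphi(\mtx{f}_s)[\Delta\mtx{f}]\,ds,
\]
where $D\varphi(\mtx{X})[\cdot]$ denotes the Fr{\'e}chet derivative of $\varphi$ at $\mtx{X}$.

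\textbf{Pointwise trace bound.} The heart of the argument is the inequality
\[
\bigl|\trace\bigl[\mtx{Y}\,D\varphi(\mtx{X})[\mtx{H}]\bigr]\bigr|
\leq \sqrt{\trace[\mtx{Y}^2\,\psi(\mtx{X})]}\,\sqrt{\trace[\mtx{H}^2\,\psi(\mtx{X})]}
\qquad\text{for all } \mtx{X},\mtx{Y},\mtx{H}\in\mathbb{H}_d.
\]
To verify it I use the spectral decomposition $\mtx{X} = \sum_k \lambda_k \Pi_k$ and the Daleckii--Krein formula $D\varphi(\mtx{X})[\mtx{H}] = \sum_{k,l}\varphi^{[1]}(\lambda_k,\lambda_l)\,\Pi_k\mtx{H}\Pi_l$, where $\varphi^{[1]}(a,b) = (\varphi(a)-\varphi(b))/(a-b)$ is the divided difference (and $\varphi'(a)$ when $a=b$). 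The cyclic property of the trace then rewrites $\trace[\mtx{Y}\,D\varphi(\mtx{X})[\mtx{H}]]$ as a weighted sum of Hilbert--Schmidt inner products $\sum_{k,l}\varphi^{[1]}(\lambda_k,\lambda_l)\,\langle \Pi_k\mtx{Y}\Pi_l,\,\Pi_k\mtx{H}\Pi_l\rangle_{\mathrm{HS}}$. Two Cauchy--Schwarz steps (first entrywise, then on the double sum) dominate this by $\sqrt{\sum_{k,l}|\varphi^{[1]}(\lambda_k,\lambda_l)|\,\|\Pi_k\mtx{Y}\Pi_l\|_{\mathrm{HS}}^2}$ times the analogous factor in $\mtx{H}$. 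The crucial scalar estimate is
\[
|\varphi^{[1]}(a,b)| \leq \frac{1}{|a-b|}\int_b^a \psi(s)\,ds \leq \frac{\psi(a)+\psi(b)}{2},
\]
where the final inequality is the trapezoidal bound valid because $\psi$ is convex. Substituting this and collapsing the partial sums via $\sum_l \Pi_l = \Id$ (so that $\sum_l \|\Pi_k\mtx{Y}\Pi_l\|_{\mathrm{HS}}^2 = \trace[\Pi_k\mtx{Y}^2]$) identifies each factor with $\trace[\mtx{Y}^2\psi(\mtx{X})]$ and $\trace[\mtx{H}^2\psi(\mtx{X})]$, respectively.

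\textbf{Integrate and pass to the limit.} Applying the pointwise bound with $\mtx{X}=\mtx{f}_s$, $\mtx{Y}=\Delta\mtx{g}$, $\mtx{H}=\Delta\mtx{f}$, integrating over $s\in[0,1]$, and applying Cauchy--Schwarz on the $s$-integral and then on $\E$ yields
\[
\bigl|\E\,\trace[\Delta\mtx{g}\,(\varphi(\mtx{f}(Z_t))-\varphi(\mtx{f}(Z_0)))]\bigr|
\leq \sqrt{\int_0^1\!\E\,\trace[(\Delta\mtx{g})^2\psi(\mtx{f}_s)]\,ds}\;\sqrt{\int_0^1\!\E\,\trace[(\Delta\mtx{f})^2\psi(\mtx{f}_s)]\,ds}.
\]
Dividing by $2t$ and letting $t\downarrow 0$, the interpolant $\mtx{f}_s(Z_0,Z_t)$ converges to $\mtx{f}(Z_0)$, so the continuity of $\psi$ together with dominated convergence (valid under the standing suitability hypothesis) passes the limit through both the $s$-integral and the expectation. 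The limit formula for $\Gamma$ then identifies $\tfrac{1}{2t}\E\,\trace[(\Delta\mtx{g})^2\psi(\mtx{f}_s)] \to \E_\mu\trace[\Gamma(\mtx{g})\psi(\mtx{f})]$, and likewise for $\mtx{f}$, delivering the claim.

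\textbf{Main obstacle.} The critical step is the pointwise trace inequality. One must exploit the Daleckii--Krein structure to turn the trace into a weighted Hilbert--Schmidt inner product, produce the matrix analogue of the scalar estimate $|\varphi(a)-\varphi(b)|\leq\tfrac{|a-b|}{2}(\psi(a)+\psi(b))$, and arrange the two successive Cauchy--Schwarz applications so that precisely one factor of $\psi(\mtx{X})$ lands on each side. Convexity of $\psi$ is invoked exactly once, in the trapezoidal bound on $\varphi^{[1]}$; this is where the hypothesis of the lemma earns its keep. Interchanging the $t\downarrow 0$ limit with the $s$-integral and the expectation is technical but routine for suitable functions.
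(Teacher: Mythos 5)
Your pointwise spectral bound $\abs{\trace[\mtx{Y}\,D\varphi(\mtx{X})[\mtx{H}]]}\leq(\trace[\mtx{Y}^2\psi(\mtx{X})])^{1/2}(\trace[\mtx{H}^2\psi(\mtx{X})])^{1/2}$ is correct, and its proof (Daleckii--Krein representation, two Cauchy--Schwarz steps, and the Hermite--Hadamard bound $\abs{\varphi^{[1]}(a,b)}\leq\tfrac12(\psi(a)+\psi(b))$) is a legitimate alternative to the tensorization argument the paper uses for its mean value trace inequality (Lemma~\ref{lem:mean_value_inequality}). The gap is in the final limiting step. You claim that $\tfrac{1}{2t}\,\E\trace[(\Delta\mtx{g})^2\,\psi(\mtx{f}_s)]\to\E_\mu\trace[\Gamma(\mtx{g})\,\psi(\mtx{f})]$ because the interpolant $\mtx{f}_s$ converges to $\mtx{f}(Z_0)$, invoking continuity of $\psi$ and dominated convergence. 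That reasoning is only valid for processes with continuous sample paths. The lemma must cover non-diffusion semigroups: for the product-measure jump semigroup of Section~\ref{sec:product_measure_all} (the source of the matrix Efron--Stein corollaries), the increment $\Delta\mtx{g}$ is of order one exactly on the resampling event, which has probability of order $t$, and on that same event $\mtx{f}_s$ does \emph{not} converge to $\mtx{f}(Z_0)$. There the true limit of $\tfrac{1}{2t}\E\trace[(\Delta\mtx{g})^2\psi(\mtx{f}_s)]$ is $\tfrac12\sum_{i}\E_\mu\E_Z\trace\big[(\mtx{g}(z)-\mtx{g}((z;Z)_i))^2\,\psi\big((1-s)\mtx{f}(z)+s\,\mtx{f}((z;Z)_i)\big)\big]$, which is not $\E_\mu\trace[\Gamma(\mtx{g})\psi(\mtx{f})]$.

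Moreover, you cannot repair this by pushing $\psi$ through the interpolation: to replace $\psi(\mtx{f}_s)$ (or, after exchangeability, $\tfrac12(\psi(\mtx{f}(Z_0))+\psi(\mtx{f}(Z_t)))$) by $\psi(\mtx{f}(Z_0))$ under the positive-semidefinite weight $(\Delta\mtx{g})^2$, scalar convexity of $\psi$ is insufficient; you would need operator convexity, which is not assumed. This is precisely the obstruction the paper's proof is built around: it bounds the increment with the symmetric weight $\psi(\mtx{f}_0)+\psi(\mtx{f}_t)$, and then shows that the residual term $\trace\big[\big(s(\Delta\mtx{f})^2+s^{-1}(\Delta\mtx{g})^2\big)\big(\psi(\mtx{f}_t)-\psi(\mtx{f}_0)\big)\big]$, divided by $t$, has vanishing expectation over $\mu$ by the triple-product Lemma~\ref{lem:three_limit}, a cancellation that uses reversibility and the zero-mean property of $\mL$ rather than any pointwise smallness. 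Your argument is missing this ingredient (or a substitute for it), and as written the dominated-convergence claim fails for exactly the jump semigroups the lemma is intended to serve; at best your proof is valid in the diffusion setting.
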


\noindent
The proof of this lemma appears below in Section~\ref{sec:key_lemma}.

Lemma~\ref{lem:key_Gamma} isolates the contributions
from the matrix $P_t \mtx{f}$ and the matrix $\phi(\mtx{f})$
in the formula~\eqref{eqn:overview_gamma}.
To estimate $\Gamma(P_t \mtx{f})$,
we invoke the local ergodicity property,
Proposition~\ref{prop:local_Poincare}\eqref{local_ergodicity}.
Last, we apply the matrix decoupling techniques, based on H{\"o}lder and Young trace inequalities, 
to bound $\E\trace \left[\Gamma(\mtx{f})\,\psi(\mtx{f})\right]$ and $\E\trace \left[\Gamma(P_t\mtx{f})\,\psi(\mtx{f})\right]$
in terms of the original quantity of interest $\E_{\mu}\trace[\mtx{f} \, \phi(\mtx{f})]$.
The following sections supply full details.

Our approach incorporates some techniques and ideas from~\cite[Theorems 4.2 and 4.3]{paulin2016efron},
but the argument is distinct.  Appendix~\ref{apdx:Stein_method} gives more details about the connection.

\subsection{Proof of chain rule inequality} 
\label{sec:key_lemma}

To prove Lemma~\ref{lem:key_Gamma}, we require a novel trace inequality.

\begin{lemma}[Mean value trace inequality]\label{lem:mean_value_inequality} Let $\varphi:\mathbb{R}\rightarrow\mathbb{R}$ be a function such that $\psi:= |\varphi'|$ is convex. For all $\mtx{A},\mtx{B},\mtx{C}\in\mathbb{H}_d$, 
\[\trace\left[\mtx{C} \, \big(\varphi(\mtx{A})-\varphi(\mtx{B})\big)\right]\leq \inf_{s>0} \frac{1}{4}\trace\left[\left(s\,(\mtx{A}-\mtx{B})^2+s^{-1}\,\mtx{C}^2\right)\big(\psi(\mtx{A})+\psi(\mtx{B})\big)\right].\]
\end{lemma}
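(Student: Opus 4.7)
The plan is a direct spectral computation, reducing the matrix inequality to a scalar mean-value bound plus Young's inequality. Fix spectral decompositions $\mtx{A}=\sum_{i}a_i\vct{u}_i\vct{u}_i^*$ and $\mtx{B}=\sum_{j}b_j\vct{v}_j\vct{v}_j^*$ with $\{\vct{u}_i\}$ and $\{\vct{v}_j\}$ orthonormal. Inserting $\sum_j \vct{v}_j\vct{v}_j^*=\Id_d$ on the right of $\varphi(\mtx{A})$ and $\sum_i \vct{u}_i\vct{u}_i^*=\Id_d$ on the left of $\varphi(\mtx{B})$ gives the double-sum representation
\[
\varphi(\mtx{A})-\varphi(\mtx{B})=\sum_{i,j}\bigl(\varphi(a_i)-\varphi(b_j)\bigr)\,\vct{u}_i\vct{u}_i^*\vct{v}_j\vct{v}_j^*.
\]
Taking the trace against $\mtx{C}$ and using cyclicity reduces the LHS to the scalar sum
\[
\trace\bigl[\mtx{C}\bigl(\varphi(\mtx{A})-\varphi(\mtx{B})\bigr)\bigr]=\sum_{i,j}\bigl(\varphi(a_i)-\varphi(b_j)\bigr)\,(\vct{v}_j^*\mtx{C}\vct{u}_i)(\vct{u}_i^*\vct{v}_j),
\]
which I will bound term by term.

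To control each summand I use the convexity of $\psi=|\varphi'|$ to produce a symmetric scalar mean-value bound. Integrating $\varphi'$ between $a$ and $b$ and applying the chord inequality for the convex function $\psi$ on that interval yields, for all real $a,b$,
\[
\bigl|\varphi(a)-\varphi(b)\bigr|\;\leq\;\tfrac{1}{2}|a-b|\bigl(\psi(a)+\psi(b)\bigr).
\]
Substituting this bound into the previous display and then applying Young's inequality $xy\leq\tfrac{1}{2}(sx^2+s^{-1}y^2)$ with $x=|a_i-b_j|\cdot|\vct{u}_i^*\vct{v}_j|$ and $y=|\vct{v}_j^*\mtx{C}\vct{u}_i|$ gives, for every $s>0$,
\[
\trace\bigl[\mtx{C}(\varphi(\mtx{A})-\varphi(\mtx{B}))\bigr]\;\leq\;\frac{1}{4}\sum_{i,j}\bigl(\psi(a_i)+\psi(b_j)\bigr)\Bigl[s(a_i-b_j)^2|\vct{u}_i^*\vct{v}_j|^2+s^{-1}|\vct{v}_j^*\mtx{C}\vct{u}_i|^2\Bigr].
\]

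To finish I reassemble this into the trace form on the RHS of the lemma. Using $(\mtx{A}-\mtx{B})\vct{u}_i=\sum_j(a_i-b_j)\vct{v}_j\vct{v}_j^*\vct{u}_i$ and Parseval's identity $\|\mtx{C}\vct{u}_i\|^2=\sum_j|\vct{v}_j^*\mtx{C}\vct{u}_i|^2$ (together with the $\mtx{B}$-analogues), a direct calculation verifies the two identities
\[
\sum_{i,j}\bigl(\psi(a_i)+\psi(b_j)\bigr)(a_i-b_j)^2|\vct{u}_i^*\vct{v}_j|^2=\trace\bigl[(\mtx{A}-\mtx{B})^2\bigl(\psi(\mtx{A})+\psi(\mtx{B})\bigr)\bigr]
\]
and $\sum_{i,j}(\psi(a_i)+\psi(b_j))|\vct{v}_j^*\mtx{C}\vct{u}_i|^2=\trace[\mtx{C}^2(\psi(\mtx{A})+\psi(\mtx{B}))]$. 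Plugging these in gives the stated inequality for each $s>0$, and the infimum finishes the proof. The one point requiring care is the scalar step: one needs the mean-value bound to be \emph{symmetric} in $a$ and $b$ so that the matrix factor reassembles as the commutative combination $\psi(\mtx{A})+\psi(\mtx{B})$ rather than a value of $\psi$ at some non-self-adjoint intermediate argument; the convexity of $\psi$ is exactly what delivers this symmetric bound, and it is the only hypothesis of the lemma that gets used. Everything else is bookkeeping.
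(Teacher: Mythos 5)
Your proof is correct, and it takes a genuinely different route from the paper's. The paper lifts $\mtx{A},\mtx{B}$ to the commuting operators $\mtx{A}\otimes\Id$ and $\Id\otimes\mtx{B}$ acting on $\mathbb{M}_d$, writes $\varphi(\mtx{A})-\varphi(\mtx{B})$ as an integral of $\varphi'$ along the segment $\tau\,\mtx{A}\otimes\Id+(1-\tau)\,\Id\otimes\mtx{B}$, and then invokes the operator Cauchy--Schwarz inequality of Paulin et al.\ together with scalar convexity of $\psi$ applied to the commuting pair. You instead diagonalize both matrices, expand in the mixed eigenbases to get the double sum $\sum_{i,j}(\varphi(a_i)-\varphi(b_j))(\vct{v}_j^*\mtx{C}\vct{u}_i)(\vct{u}_i^*\vct{v}_j)$, and handle each term with the scalar chord bound $|\varphi(a)-\varphi(b)|\leq\tfrac12|a-b|(\psi(a)+\psi(b))$ (which follows from the fundamental theorem of calculus and convexity of $\psi$) plus scalar Young's inequality; your reassembly identities check out, since $\trace[(\mtx{A}-\mtx{B})^2\psi(\mtx{A})]=\sum_{i,j}\psi(a_i)(a_i-b_j)^2|\vct{u}_i^*\vct{v}_j|^2$ and $\trace[\mtx{C}^2\psi(\mtx{A})]=\sum_{i,j}\psi(a_i)|\vct{v}_j^*\mtx{C}\vct{u}_i|^2$, with the analogous formulas for $\mtx{B}$ (Hermiticity of $\mtx{C}$ giving $|\vct{u}_i^*\mtx{C}\vct{v}_j|=|\vct{v}_j^*\mtx{C}\vct{u}_i|$). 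Under the hood the two arguments are the same computation: your index pairs $(i,j)$ enumerate the eigenvectors $\vct{u}_i\vct{v}_j^*$ of the tensor operator, the entrywise Young step is the operator Cauchy--Schwarz inequality made explicit, and the chord bound packages the paper's $\int_0^1$ plus convexity step. What your version buys is self-containedness and elementarity: no functional calculus on $\mathbb{M}_d$-valued operators and no external operator Cauchy--Schwarz lemma. What the paper's version buys is a basis-free formulation that isolates the commuting-tensorization trick, which is the reusable idea in this circle of results. One small point of hygiene, shared equally by both proofs: the chord bound (and the paper's integral representation) tacitly uses that $\varphi$ is absolutely continuous so that $\varphi(a)-\varphi(b)=\int_b^a\varphi'$, which holds in all the applications ($\varphi(x)=\sgn(x)|x|^{2q-1}$, $\varphi(x)=\econst^{\theta x}$) and is implicit in the hypothesis that $\varphi'$ exists with $|\varphi'|$ convex.
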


Lemma~\ref{lem:mean_value_inequality} is a common generalization
of \cite[Lemmas 9.2 and 12.2]{paulin2016efron}. Roughly speaking, it exploits convexity to bound
the difference $\varphi(\mtx{A})-\varphi(\mtx{B})$
in the spirit of the mean value theorem.
We defer the proof of Lemma~\ref{lem:mean_value_inequality}
to Appendix~\ref{apdx:mean_value}. 

\begin{proof}[Proof of Lemma~\ref{lem:key_Gamma} from Lemma~\ref{lem:mean_value_inequality}]
For simplicity, we abbreviate 
\[\mtx{f}_t = \mtx{f}(Z_t),\quad \mtx{g}_t = \mtx{g}(Z_t) \quad\text{and}\quad \mtx{f}_0 = \mtx{f}(Z_0), \quad \mtx{g}_0 = \mtx{g}(Z_0).\]
By Proposition~\ref{prop:Gamma_property}\eqref{limit_formula},
\begin{equation}\label{step:key_lemma_1}
\begin{split}
\E_\mu \trace\Gamma(\mtx{g},\varphi(\mtx{f})) =&\ \E_{Z\sim\mu}\trace\lim_{t\downarrow 0}\frac{1}{2t} \E\left[\left(\mtx{g}_t-\mtx{g}_0\right)\left(\varphi(\mtx{f}_t)-\varphi(\mtx{f}_0)\right)\,\big|\,Z_0=Z\right]\\
=&\ \E_{Z\sim\mu}\lim_{t\downarrow 0}\frac{1}{2t} \E \left[\trace\left[\left(\mtx{g}_t-\mtx{g}_0\right)\left(\varphi(\mtx{f}_t)-\varphi(\mtx{f}_0)\right)\right]\,\big|\,Z_0=Z\right].
\end{split}
\end{equation}
Fix a parameter $s > 0$.  For each $t > 0$,  the mean value trace inequality,
Lemma~\ref{lem:mean_value_inequality}, yields
\begin{equation}\label{step:key_lemma_2}
\begin{split}
\trace \left[\left(\mtx{g}_t-\mtx{g}_0\right)\big(\varphi(\mtx{f}_t)-\varphi(\mtx{f}_0)\big)\right]
&\leq \frac{1}{4}\trace\left[\left(s\,(\mtx{f}_t-\mtx{f}_0)^2+s^{-1}\,(\mtx{g}_t-\mtx{g}_0)^2\right)\big(\psi(\mtx{f}_t)+\psi(\mtx{f}_0)\big)\right]\\
&= \frac{1}{2} \trace\left[\left(s\,(\mtx{f}_t-\mtx{f}_0)^2+s^{-1}\,(\mtx{g}_t-\mtx{g}_0)^2\right)\psi(\mtx{f}_0)\right]\\
&\qquad  + \frac{1}{4} \trace\left[\left(s(\mtx{f}_t-\mtx{f}_0)^2+s^{-1}\,(\mtx{g}_t-\mtx{g}_0)^2\right)\big(\psi(\mtx{f}_t)-\psi(\mtx{f}_0)\big)\right].
\end{split}
\end{equation}
It follows from the triple product result, Lemma~\ref{lem:three_limit}, that 
the second term satisfies
\begin{equation}\label{step:key_lemma_3}
\E_{Z\sim\mu}\lim_{t\downarrow 0}\frac{1}{t} \trace\E \left[ \left(s\,(\mtx{f}_t-\mtx{f}_0)^2+s^{-1}\,(\mtx{g}_t-\mtx{g}_0)^2\right)\big(\psi(\mtx{f}_t)-\psi(\mtx{f}_0)\big) \,\big|\,Z_0=Z\right] =0.
\end{equation}
Sequence the displays \eqref{step:key_lemma_1},\eqref{step:key_lemma_2} and \eqref{step:key_lemma_3} to reach 
\begin{align*}
\E_\mu \trace\Gamma(\mtx{g},\varphi(\mtx{f}))&\leq \frac{1}{2}\E_{Z\sim\mu}\lim_{t\downarrow 0} \frac{1}{2t} \trace \E\left[\left(s\,(\mtx{f}_t-\mtx{f}_0)^2+s^{-1}\,(\mtx{g}_t-\mtx{g}_0)^2\right)\psi(\mtx{f}_0) \,\big|\,Z_0=Z\right] \\
&= \frac{1}{2}\E_{Z\sim\mu}\trace\Big[\Big(s\,\lim_{t\downarrow0}\frac{1}{2t} \E[(\mtx{f}_t-\mtx{f}_0)^2\,|\,Z_0=Z] \\
&\qquad\qquad\qquad\ + s^{-1}\,\lim_{t\downarrow0}\frac{1}{2t} \E[(\mtx{g}_t-\mtx{g}_0)^2\,|\,Z_0=Z] \Big)\psi(\mtx{f}(Z))\Big] \\
&= \frac{1}{2} \E_\mu\trace \left[\left(s\,\Gamma(\mtx{f}) +s^{-1}\,\Gamma(\mtx{g})\right)\,\psi(\mtx{f})\right].
\end{align*}
The last relation is Proposition~\ref{prop:Gamma_property}\eqref{limit_formula}.
Minimize the right-hand side over $s\in(0,\infty)$ to arrive at 
\[\E_\mu \trace\Gamma(\mtx{g},\varphi(\mtx{f}))\leq \big(\E_\mu\trace \left[\Gamma(\mtx{f})\,\psi(\mtx{f})\right]\big)^{1/2}\cdot\big(\E_\mu\trace \left[\Gamma(\mtx{g})\,\psi(\mtx{f})\right]\big)^{1/2}.\]
This completes the proof of Lemma~\ref{lem:key_Gamma}.
\end{proof}

\subsection{Polynomial moments}
\label{sec:polynomial_moments_proof}

This section is dedicated to the proof of Theorem~\ref{thm:polynomial_moment}, which states that the Bakry--{\'E}mery criterion implies matrix polynomial moment bounds.

\subsubsection{Setup}

Consider a reversible, ergodic Markov semigroup $(P_t)_{t \geq 0}$
with stationary measure $\mu$.
Assume that the semigroup satisfies the Bakry--{\'E}mery criterion~\eqref{Bakry-Emery}
with constant $c > 0$.
By Proposition~\ref{prop:local_Poincare}, this is equivalent to local ergodicity.

Fix a suitable function $\mtx{f}:\Omega \rightarrow \mathbb{H}_d$.
Proposition~\ref{prop:Gamma_property}\eqref{limit_formula} implies that the carr{\'e}
du champ is shift invariant.  In particular, $\Gamma(\mtx{f}) = \Gamma(\mtx{f}-\E_\mu\mtx{f})$.
Therefore, we may assume that $\E_\mu\mtx{f}=\mtx{0}$.

The quantity of interest is
\[
\E_\mu\trace |\mtx{f}|^{2q}
	= \E_\mu\trace \left[\mtx{f}\cdot \sgn(\mtx{f})\cdot |\mtx{f}|^{2q-1}\right]
	=: \E_{\mu} \trace \left[ \mtx{f} \, \varphi(\mtx{f}) \right].
\]
We have introduced the signed moment function $\varphi: x\mapsto \sgn(x)\cdot \abs{x}^{2q-1}$
for $x \in \R$.  Note that the absolute derivative $\psi(x) := \abs{\varphi'(x)} = (2q-1)\abs{x}^{2q-2}$
is convex when $q= 1$ or when $q\geq 1.5$.

\begin{remark}[Missing powers]
A similar argument holds when $q \in (1, 1.5)$.
It requires a variant of Lemma~\ref{lem:key_Gamma}
that holds for monotone $\psi$, but has an
extra factor of $2$ on the right-hand side.
\end{remark}

\subsubsection{A Markov semigroup argument}

By the ergodicity assumption \eqref{eqn:ergodicity}, it holds that 
\[\lim_{t\rightarrow\infty}\E_\mu\trace[P_t(\mtx{f})\,\varphi(\mtx{f})] = \E_\mu\trace[(\E_\mu\mtx{f})\,\varphi(\mtx{f})] = 0.\]
Therefore,
\begin{equation}\label{step:polynomial_1}
\begin{split}
\E_\mu\trace \abs{\mtx{f}}^{2q} &= \E_\mu\trace \left[P_0\mtx{f}\, \varphi(\mtx{f})\right] - \lim_{t\rightarrow\infty}\E_\mu\trace[P_t(\mtx{f})\,\varphi(\mtx{f})]\\
&= -\int_0^\infty\frac{\diff{} }{\diff t}\E_\mu\trace\left[(P_t\mtx{f}) \, \varphi(\mtx{f})\right]\idiff t = -\int_0^\infty\E_\mu\trace\left[\mL(P_t\mtx{f}) \, \varphi(\mtx{f})\right]\idiff t.
\end{split}
\end{equation}
By convexity of $\psi$, we can invoke the chain rule inequality, Lemma~\ref{lem:key_Gamma}, to obtain 
\begin{equation}\label{step:polynomial_2}
\begin{split}
- \E_\mu\trace\left[\mL(P_t\mtx{f})\, \varphi(\mtx{f})\right] =&\ \E_\mu\trace\Gamma(P_t\mtx{f},\varphi(\mtx{f}))\\
\leq&\ \left(\E_\mu\trace\left[\Gamma(\mtx{f})\,\psi(\mtx{f}) \right]\cdot \E_\mu\trace\left[\Gamma(P_t\mtx{f})\,\psi(\mtx{f})\right]\right)^{1/2} \\
=&\ (2q-1)\left(\E_\mu\trace\left[\Gamma(\mtx{f})\abs{\mtx{f}}^{2q-2} \right]\cdot \E_\mu\trace\left[\Gamma(P_t\mtx{f})\abs{\mtx{f}}^{2q-2}\right]\right)^{1/2}\\
\leq&\ (2q-1)\,\econst^{-t/c}\left(\E_\mu\trace\left[\Gamma(\mtx{f})\abs{\mtx{f}}^{2q-2} \right]\cdot \E_\mu\trace\left[(P_t\Gamma(\mtx{f}))\abs{\mtx{f}}^{2q-2}\right]\right)^{1/2}.
\end{split}
\end{equation}
The last inequality is the local ergodicity condition,
Proposition~\ref{prop:local_Poincare}\eqref{local_ergodicity}.

\subsubsection{Decoupling}

Apply H\"older's inequality for the trace followed by H\"older's inequality for the expectation
to obtain 
\begin{equation} \label{step:polynomial_2.5}
\begin{aligned}
\E_\mu\trace\left[\Gamma(\mtx{f}) \abs{\mtx{f}}^{2q-2} \right] &\leq \left(\E_\mu\trace \Gamma(\mtx{f})^q \right)^{1/q}\cdot \left(\E_\mu\trace|\mtx{f}|^{2q}\right)^{(q-1)/q} \quad\text{and} \\
\E_\mu\trace\left[(P_t\Gamma(\mtx{f})) \abs{\mtx{f}}^{2q-2}\right] &\leq  \left(\E_\mu\trace{} (P_t\Gamma(\mtx{f}))^q \right)^{1/q}\cdot \big(\E_\mu\trace \abs{\mtx{f}}^{2q}\big)^{(q-1)/q}.
\end{aligned}
\end{equation}
Introduce the bounds~\eqref{step:polynomial_2.5} into \eqref{step:polynomial_2} to find that
\begin{equation}\label{step:polynomial_3}
\begin{split}
& - \E_\mu\trace\left[\mL(P_t\mtx{f}) \,\varphi(\mtx{f})\right] \\
&\qquad\qquad \leq (2q-1)\,\econst^{-t/c}\left(\E_\mu\trace \Gamma(\mtx{f})^q \cdot \E_\mu\trace{}(P_t\Gamma(\mtx{f}))^q \right)^{1/(2q)} \big(\E_\mu\trace\abs{\mtx{f}}^{2q}\big)^{(q-1)/q}.
\end{split}
\end{equation}
Substitute \eqref{step:polynomial_3} into \eqref{step:polynomial_1} and rearrange the expression to reach
\begin{equation}\label{step:polynomial_4}
\big(\E_\mu\trace \abs{\mtx{f}}^{2q}\big)^{1/q}\leq (2q-1)\left(\E_\mu\trace \Gamma(\mtx{f})^q \right)^{1/(2q)} \int_0^{\infty}  \econst^{-t/c}\left(\E_\mu\trace{} (P_t\Gamma(\mtx{f}))^q\right)^{1/(2q)}\idiff t.
\end{equation}
It remains to remove the semigroup from the integral.

\subsubsection{Endgame}

The trace power $\trace[ (\cdot)^q ]$ is convex on $\mathbb{H}_d$ for $q\geq1$;  see~\cite[Theorem 2.10]{carlen2010trace}.
Therefore, the Jensen inequality \eqref{eqn:semigroup_Jensen_2} for the semigroup implies that
\begin{equation}\label{step:polynomial_5}
\E_\mu\trace{} (P_t\Gamma(\mtx{f}))^q \leq \E_\mu \trace \Gamma(\mtx{f})^q.
\end{equation} 
Substituting \eqref{step:polynomial_5} into \eqref{step:polynomial_4} yields
\[\big(\E_\mu\trace \abs{\mtx{f}}^{2q}\big)^{1/q}\leq (2q-1) \left(\E_\mu\trace \Gamma(\mtx{f})^q \right)^{1/q} \int_0^\infty \econst^{-t/c} \idiff t = c \, (2q-1)\left(\E_\mu\trace\Gamma(\mtx{f})^q\right)^{1/q}.\]
This establishes \eqref{eqn:polynomial_moment_1}.

Define the uniform bound $v_{\mtx{f}} := \norm{ \norm{ \Gamma(\mtx{f}) } }_{L_{\infty}(\mu)}$. 
We have the further estimate
\[\left(\E_\mu\trace\left[\Gamma(\mtx{f})^q\right]\right)^{1/(2q)}\leq d^{1/(2q)} \sqrt{v_{\mtx{f}}}.\]
The statement \eqref{eqn:polynomial_moment_2} now follows from \eqref{eqn:polynomial_moment_1}.
This step completes the proof of Theorem~\ref{thm:polynomial_moment}.

\subsection{Exponential moments} 
\label{sec:exponential_moments_proof}

In this section, we establish Theorem~\ref{thm:exponential_concentration},
the exponential matrix concentration inequality.  The main technical ingredient
is a bound on exponential moments:

\begin{theorem}[Exponential moments]\label{thm:exponential_moment}
Instate the hypotheses of Theorem~\ref{thm:exponential_concentration}.
For all $\theta\in(-\sqrt{\beta/c},\sqrt{\beta/c})$, 
\begin{equation}\label{eqn:exponential_moment_1}
\log\E_\mu \ntr \econst^{\theta(\mtx{f}-\E_\mu\mtx{f})} \leq \frac{c\theta^2 r_{\mtx{f}}(\beta)}{2(1-c\theta^2/\beta)}.
\end{equation}
Moreover, if $v_{\mtx{f}} <+\infty$, then 
\begin{equation}\label{eqn:exponential_moment_2}
\log\E_\mu \ntr \econst^{\theta(\mtx{f}-\E_\mu\mtx{f})} \leq \frac{cv_{\mtx{f}}\theta^2}{2}
\quad\text{for all $\theta \in \R$.}
\end{equation}
\end{theorem}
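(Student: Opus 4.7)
The plan is to run the semigroup template from Section~\ref{sec:polynomial_moments_proof} with $\varphi(x)=\econst^{\theta x}$ in place of the signed power, and package the result as a differential inequality for the matrix MGF $H(\theta):=\E_\mu \ntr \econst^{\theta \mtx{f}}$. By shift-invariance of the carr{\'e} du champ (Proposition~\ref{prop:Gamma_property}\eqref{limit_formula}), I may assume $\E_\mu\mtx{f}=\mtx{0}$, so $H(0)=1$ and $H'(0)=\E_\mu\ntr\mtx{f}=0$.

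First I would compute $H'(\theta)=\E_\mu\ntr[\mtx{f}\,\econst^{\theta \mtx{f}}]$ and, using ergodicity, the derivative relation~\eqref{eqn:derivative_relation}, and reversibility~\eqref{eqn:reversibility_2}, derive the semigroup representation
\[
H'(\theta)=-\int_0^{\infty}\E_\mu\ntr[\mL(P_t\mtx{f})\econst^{\theta \mtx{f}}]\idiff t=\int_0^{\infty}\E_\mu\ntr\,\Gamma(P_t\mtx{f},\econst^{\theta \mtx{f}})\idiff t.
\]
I would then apply the chain rule inequality (Lemma~\ref{lem:key_Gamma}) with $\psi(x)=|\theta|\econst^{\theta x}$ (which is convex), the local ergodicity bound $\Gamma(P_t\mtx{f})\preccurlyeq \econst^{-2t/c}P_t\Gamma(\mtx{f})$ from Proposition~\ref{prop:local_Poincare}\eqref{local_ergodicity}, and reversibility to shift $P_t$ from $\Gamma(\mtx{f})$ onto $\econst^{\theta \mtx{f}}$. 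The outcome is an estimate of the form
\[
|H'(\theta)|\leq |\theta|\int_0^\infty \econst^{-t/c}\sqrt{A(\theta)\,A_t(\theta)}\,\idiff t \quad\text{(modulo constant factors involving $d$),}
\]
where $A(\theta):=\E_\mu\ntr[\Gamma(\mtx{f})\econst^{\theta \mtx{f}}]$ and $A_t(\theta):=\E_\mu\ntr[\Gamma(\mtx{f})\,P_t\econst^{\theta \mtx{f}}]$.

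For the uniform-variance case~\eqref{eqn:exponential_moment_2}, both factors are dominated by $v_{\mtx{f}} H(\theta)$: the pointwise inequality $\Gamma(\mtx{f})\preccurlyeq v_{\mtx{f}}\Id$ together with positivity of $\econst^{\theta \mtx{f}}$ handles $A$, while the stationarity identity $\E_\mu[P_t\econst^{\theta \mtx{f}}]=\E_\mu\econst^{\theta \mtx{f}}$ handles $A_t$. The integral $\int_0^\infty \econst^{-t/c}\idiff t=c$ then produces $|H'(\theta)|\leq c|\theta|v_{\mtx{f}}H(\theta)$, and integrating $|(\log H)'(\theta)|\leq c|\theta|v_{\mtx{f}}$ from $0$ to $\theta$ (using $H(0)=1$) delivers $\log H(\theta)\leq cv_{\mtx{f}}\theta^2/2$.

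For the general bound~\eqref{eqn:exponential_moment_1}, I would replace the crude uniform bound on $\Gamma(\mtx{f})$ by a Young--Fenchel-type trace inequality such as the spectral inequality $\beta\Gamma(\mtx{f})\preccurlyeq \econst^{\beta\Gamma(\mtx{f})}-\Id$, combined with a decoupling estimate (for instance Peierls--Bogoliubov applied to $\trace\econst^{\theta \mtx{f}+\log(\cdot)}$, or a noncommutative H\"older step) to bound each of $A(\theta)$ and $A_t(\theta)$ by a combination of $\econst^{\beta r_{\mtx{f}}(\beta)}H(\theta)$ and a feedback term proportional to $(c\theta^2/\beta)\log H(\theta)$. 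Substituting back yields a self-consistent differential inequality whose solution in the range $|\theta|<\sqrt{\beta/c}$ is exactly $\frac{c\theta^2 r_{\mtx{f}}(\beta)/2}{1-c\theta^2/\beta}$, with the denominator emerging as the geometric sum generated by iterating the feedback. The main obstacle will be calibrating this Young step so that the coefficient on $r_{\mtx{f}}(\beta)$ is precisely $c\theta^2/2$ and the feedback radius matches $\sqrt{\beta/c}$; as a sanity check, the formal limit $\beta\to\infty$ should recover~\eqref{eqn:exponential_moment_2}, since $r_{\mtx{f}}(\beta)\to v_{\mtx{f}}$ and the feedback term vanishes.
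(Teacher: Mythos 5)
Your skeleton coincides with the paper's: the semigroup representation of $H'(\theta)$, the chain rule inequality of Lemma~\ref{lem:key_Gamma} with $\psi(x)=|\theta|\econst^{\theta x}$, local ergodicity from Proposition~\ref{prop:local_Poincare}\eqref{local_ergodicity}, and a differential inequality for $\log H$. Your treatment of the bounded case~\eqref{eqn:exponential_moment_2} is correct, and in fact slightly more direct than the paper's, which deduces it from~\eqref{eqn:exponential_moment_1} by noting $r_{\mtx{f}}(\beta)\leq v_{\mtx{f}}$ and letting $\beta\to\infty$. Do drop the hedge ``modulo constant factors involving $d$'': since every quantity is phrased with the normalized trace $\ntr$, Lemma~\ref{lem:key_Gamma} divided by $d$ carries no dimensional factor, and any stray $d$ in $H'\leq c|\theta| v_{\mtx{f}} H$ would ruin both final bounds.

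The general bound~\eqref{eqn:exponential_moment_1} is where there is a genuine gap, and it is exactly the step you flag as ``the main obstacle.'' The paper's decoupling is Young's inequality for matrix entropy (Fact~\ref{lem:Young_inequality}), applied with $\mtx{X}=\beta\Gamma(\mtx{f})$ and the normalized density $\mtx{Y}=\econst^{\theta\mtx{f}}/H(\theta)$: it yields $\E_\mu\ntr[\Gamma(\mtx{f})\,\econst^{\theta\mtx{f}}]\leq H(\theta)\,r_{\mtx{f}}(\beta)+\beta^{-1}\E_\mu\ntr\bigl[\econst^{\theta\mtx{f}}\log\bigl(\econst^{\theta\mtx{f}}/H(\theta)\bigr)\bigr]$, and the operator inequality $\log\bigl(\econst^{\theta\mtx{f}}/H(\theta)\bigr)\preccurlyeq\theta\mtx{f}$ (valid because $\log H(\theta)\geq 0$) converts the entropy term into $\theta H'(\theta)/\beta$. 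So the correct structure is a prefactor $r_{\mtx{f}}(\beta)\,H(\theta)$ and a feedback term in $H'(\theta)$; feeding this into the time integral gives $H'\leq c\theta\, r_{\mtx{f}}(\beta)\,H+(c\theta^2/\beta)H'$, which rearranges and integrates to~\eqref{eqn:exponential_moment_1}. Your proposed structure---a prefactor $\econst^{\beta r_{\mtx{f}}(\beta)}H(\theta)$ and feedback proportional to $\log H(\theta)$---is of the wrong form: a bound carrying $\econst^{\beta r_{\mtx{f}}(\beta)}$ cannot produce a final estimate linear in $r_{\mtx{f}}(\beta)$, and feedback in $\log H$ rather than $H'$ does not integrate to the denominator $1-c\theta^2/\beta$. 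Nor do the tools you name deliver it: $\beta\Gamma(\mtx{f})\preccurlyeq\econst^{\beta\Gamma(\mtx{f})}-\Id$ leaves you facing $\E_\mu\ntr[\econst^{\beta\Gamma(\mtx{f})}\econst^{\theta\mtx{f}}]$, which does not factor into $\econst^{\beta r_{\mtx{f}}(\beta)}H(\theta)$ (a uniform bound collapses back to $v_{\mtx{f}}$, while Cauchy--Schwarz or H{\"o}lder replaces $\theta$ by $2\theta$ or $p\theta$ and spoils the radius $\sqrt{\beta/c}$). Finally, whichever factor carries the $P_t$, a Jensen step is needed and is absent from your sketch: either $\E_\mu\ntr\exp(\beta P_t\Gamma(\mtx{f}))\leq\exp(\beta r_{\mtx{f}}(\beta))$ via convexity of the trace exponential (as in the paper), or, in your reversibility-shifted form, convexity of $\mtx{Y}\mapsto\ntr[\mtx{Y}\log\mtx{Y}]$ so that the entropy of $P_t\econst^{\theta\mtx{f}}/H(\theta)$ is controlled by that of $\econst^{\theta\mtx{f}}/H(\theta)$.
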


\noindent
The proof of Theorem~\ref{thm:exponential_moment} occupies the rest of this subsection.
Afterward, in Section~\ref{sec:exponential_concentration_proof}, we derive
Theorem~\ref{thm:exponential_concentration}.

\subsubsection{Setup}

As usual, we consider a reversible, ergodic Markov semigroup $(P_t)_{t \geq 0}$
with stationary measure $\mu$.
Assume that the semigroup satisfies the Bakry--{\'E}mery criterion~\eqref{Bakry-Emery}
for a constant $c > 0$, so it is locally ergodic.

Choose a suitable function $\mtx{f}:\Omega \rightarrow \mathbb{H}_d$.
We may assume that $\E_\mu\mtx{f}=\mtx{0}$.  Furthermore, we only
need to consider the case $\theta \geq 0$.  The results for $\theta < 0$
follow formally under the change of variables $\theta \mapsto - \theta$
and $\mtx{f} \mapsto - \mtx{f}$.

The quantity of interest is the normalized trace mgf:
\[m(\theta) := \E_\mu \ntr \econst^{\theta\mtx{f}}\quad \text{for}\ \theta\geq0.\]
We will bound the derivative of this function:
\[
m'(\theta) = \E_\mu \ntr \left[\mtx{f} \, \econst^{\theta\mtx{f}}\right]
	=: \E_{\mu} \ntr [ \mtx{f} \, \varphi(\mtx{f}) ].
\]
We have introduced the function $\varphi : x \mapsto \econst^{\theta x}$ for $x \in \R$.
Note that its absolute derivative $\psi(x) := \abs{ \varphi'(x) } = \theta \econst^{\theta x}$
is a convex function, since $\theta \geq 0$.
Here and elsewhere, we use the properties of the trace mgf that are
collected in Lemma~\ref{prop:trace_mgf}. 

\subsubsection{A Markov semigroup argument}

By the ergodicity assumption \eqref{eqn:ergodicity}, we have
\begin{equation}\label{step:exponential_1}
\begin{split}
m'(\theta) &= \E_\mu \ntr \left[P_0\mtx{f}\,\econst^{\theta\mtx{f}}\right] - \lim_{t\rightarrow\infty}\E_\mu\trace\left[P_t(\mtx{f})\,\econst^{\theta\mtx{f}}\right] \\
&= -\int_0^\infty \frac{\diff{}}{\diff{t}}\E_\mu \ntr \left[P_t(\mtx{f})\,\econst^{\theta\mtx{f}}\right]\diff t = -\int_0^\infty \E_\mu \ntr \left[\mL(P_t\mtx{f})\,\econst^{\theta\mtx{f}}\right]\diff t.
\end{split}
\end{equation}
Invoke the chain rule inequality, Lemma~\ref{lem:key_Gamma}, to obtain 
\begin{equation}\label{step:exponential_2}
\begin{split}
-\E_\mu \ntr \left[\mL(P_t\mtx{f})\,\econst^{\theta\mtx{f}}\right] &= \E_\mu \ntr \Gamma(P_t\mtx{f},\econst^{\theta\mtx{f}})\\
&\leq \theta \left(\E_\mu\ntr \left[\Gamma(\mtx{f})\,\econst^{\theta\mtx{f}}\right]\cdot \E_\mu\ntr \left[\Gamma(P_t\mtx{f})\,\econst^{\theta\mtx{f}}\right]\right)^{1/2} \\
&\leq \theta \econst^{-t/c}\left(\E_\mu\ntr \left[\Gamma(\mtx{f})\,\econst^{\theta\mtx{f}}\right]\cdot \E_\mu\ntr \left[(P_t\Gamma(\mtx{f}))\, \econst^{\theta\mtx{f}}\right]\right)^{1/2}.
\end{split}
\end{equation}
The second inequality is the local ergodicity condition, Proposition~\ref{prop:local_Poincare}\eqref{local_ergodicity}.

\subsubsection{Decoupling}

The next step is to use an entropy inequality to separate the carr\'e du champ operator in \eqref{step:exponential_2} from the matrix exponential. The following trace inequality appears as \cite[Proposition A.3]{mackey2014}; see also \cite[Theorem 2.13]{carlen2010trace}.

\begin{fact}(Young's inequality for matrix entropy)\label{lem:Young_inequality} Let $\mtx{X}$ be a random matrix in $\mathbb{H}_d$, and let $\mtx{Y}$ be a random matrix in $\mathbb{H}_d^+$ such that $\E\ntr \mtx{Y} = 1$. Then 
\begin{equation*}\label{eqn:Young_inequality}
\E \ntr\left[\mtx{X}\mtx{Y}\right] \leq \log\E\ntr\econst^{\mtx{X}} + \E\ntr\left[\mtx{Y}\log \mtx{Y}\right].
\end{equation*} 
\end{fact}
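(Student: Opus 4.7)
The plan is to reduce this random-matrix Young inequality to a pointwise (deterministic) trace inequality that carries a free scalar parameter, then take expectations, and finally optimize the parameter. Specifically, I would first establish the deterministic bound
\[
\trace[\mtx{A}\mtx{B}] \leq \trace[\mtx{B}\log\mtx{B}] + \econst^{-\alpha}\,\trace\,\econst^{\mtx{A}} + (\alpha-1)\trace\mtx{B}
\]
for every $\mtx{A}\in\mathbb{H}_d$, every $\mtx{B}\in\mathbb{H}_d^+$, and every scalar $\alpha\in\R$. This is a matrix Gibbs-type variational bound, and it follows from Klein's inequality (the operator analog of the convexity inequality $f(x)\ge f(y)+f'(y)(x-y)$) applied to $f(x)=x\log x-x$ with the pair $\mtx{B}$ and $\mtx{W}:=\econst^{\mtx{A}-\alpha\Id}$: since $\log\mtx{W}=\mtx{A}-\alpha\Id$, the resulting inequality $\trace[\mtx{B}\log\mtx{B}-\mtx{B}+\mtx{W}-\mtx{B}\log\mtx{W}]\ge 0$ collapses to the display after using $\econst^{\mtx{A}-\alpha\Id}=\econst^{-\alpha}\econst^{\mtx{A}}$ and the cyclicity of the trace.

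Given the pointwise bound, I would evaluate it on each realization of the random pair $(\mtx{X},\mtx{Y})$ and take expectation:
\[
\E\trace[\mtx{X}\mtx{Y}] \leq \E\trace[\mtx{Y}\log\mtx{Y}] + \econst^{-\alpha}\,\E\trace\,\econst^{\mtx{X}} + (\alpha-1)\,\E\trace\mtx{Y}.
\]
The normalization hypothesis gives $\E\trace\mtx{Y}=d$, so the last term becomes $(\alpha-1)d$. Differentiating the right-hand side in $\alpha$ and setting the derivative to zero yields the optimum $\econst^{\alpha}=d^{-1}\E\trace\,\econst^{\mtx{X}}=\E\ntr\econst^{\mtx{X}}$, at which point the right-hand side simplifies to $\E\trace[\mtx{Y}\log\mtx{Y}]+d\log\E\ntr\econst^{\mtx{X}}$. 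Dividing through by $d$ produces exactly the stated inequality.

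The only real subtlety is the order of operations: the expectation over $(\mtx{X},\mtx{Y})$ must be taken \emph{before} optimizing in $\alpha$, so that $\alpha$ is a scalar constant rather than a function of $\mtx{X}$. If one instead minimizes pointwise and then averages, the optimal $\alpha$ depends on the realization and one ends up with $\E\log\ntr\econst^{\mtx{X}}$ in place of $\log\E\ntr\econst^{\mtx{X}}$---a strictly weaker conclusion by Jensen's inequality. The deterministic ingredient itself (Klein's inequality for $x\log x-x$) is classical; it reduces to the scalar convexity inequality after diagonalizing $\mtx{B}$ and expanding both traces in that eigenbasis, so I do not anticipate a genuine obstacle there.
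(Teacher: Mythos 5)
Your proof is correct. Note that the paper does not prove this statement at all: it is quoted as a known Fact, citing Mackey et al.\ [Prop.~A.3] and Carlen [Thm.~2.13], so there is no internal argument to compare against. Your route --- Klein's inequality for the convex function $x\log x - x$ applied to $\mtx{Y}$ and the shifted exponential $\econst^{\mtx{X}-\alpha\Id}$, giving the pointwise bound $\trace[\mtx{X}\mtx{Y}] \leq \trace[\mtx{Y}\log\mtx{Y}] + \econst^{-\alpha}\trace\econst^{\mtx{X}} + (\alpha-1)\trace\mtx{Y}$, followed by taking expectations and only then optimizing the scalar $\alpha$ --- is the standard Gibbs-variational argument and is essentially the proof given in the cited references. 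You also correctly identify the one genuine subtlety: $\alpha$ must be a deterministic constant chosen after averaging, which is precisely what produces $\log\E\ntr\econst^{\mtx{X}}$ rather than the weaker $\E\log\ntr\econst^{\mtx{X}}$; and since the deterministic inequality holds for each joint realization of $(\mtx{X},\mtx{Y})$, no independence is needed. The only cosmetic imprecision is that verifying Klein's inequality requires diagonalizing both matrices and expanding in the overlaps $\abs{\langle \vct{u}_i,\vct{v}_j\rangle}^2$, not just diagonalizing $\mtx{B}$, and one should note that $\mtx{Y}\log\mtx{Y}$ is interpreted by continuity at zero eigenvalues while the derivative $\log$ is only ever evaluated at the strictly positive matrix $\econst^{\mtx{X}-\alpha\Id}$; neither point affects the validity of the argument.
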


Apply Fact~\ref{lem:Young_inequality} to see that, for any $\beta>0$,
\begin{equation}\label{step:exponential_3}
\begin{split}
\E_\mu\ntr \left[\Gamma(\mtx{f}) \, \econst^{\theta\mtx{f}}\right] &= \frac{m(\theta)}{\beta} \E_\mu\ntr \left[\beta \Gamma(\mtx{f})\frac{\econst^{\theta\mtx{f}}}{m(\theta)}\right]\\
&\leq \frac{m(\theta)}{\beta} \left(\log\E_\mu\ntr \exp\left(\beta\Gamma(\mtx{f})\right) + \E_\mu\ntr\left[\frac{\econst^{\theta\mtx{f}}}{m(\theta)}\log\frac{\econst^{\theta\mtx{f}}}{m(\theta)}\right]\right) \\
&= m(\theta)\, r(\beta) + \frac{1}{\beta}\E_\mu\ntr\left[\econst^{\theta\mtx{f}}\log\frac{\econst^{\theta\mtx{f}}}{m(\theta)}\right].
\end{split}
\end{equation}
We have identified the exponential mean $r(\beta) := \beta^{-1}\log\E_\mu\ntr \exp\left(\beta\Gamma(\mtx{f}) \right)$.

Likewise,
\begin{equation*}
\E_\mu\ntr \left[(P_t\Gamma(\mtx{f}))\,\econst^{\theta\mtx{f}}\right]\leq \frac{m(\theta)}{\beta} \log\E_\mu\ntr \exp\left(\beta P_t\Gamma(\mtx{f})\right) + \frac{1}{\beta}\E_\mu\ntr\left[\econst^{\theta\mtx{f}}\log\frac{\econst^{\theta\mtx{f}}}{m(\theta)}\right].
\end{equation*}
The trace exponential $\ntr \exp(\cdot)$ is operator convex; see \cite[Theorem 2.10]{carlen2010trace}.
The Jensen inequality \eqref{eqn:semigroup_Jensen_2} for the semigroup implies that 
\begin{equation*}\label{step:exponential_5}
\E_\mu\ntr \exp\left(\beta P_t\Gamma(\mtx{f})\right) \leq \E_\mu\ntr \exp\left(\beta\Gamma(\mtx{f})\right)
	= \exp \left(\beta r(\beta)\right).
\end{equation*}
Combine the last two displays to obtain
\begin{equation}\label{step:exponential_4}
\E_\mu\ntr \left[(P_t\Gamma(\mtx{f}))\,\econst^{\theta\mtx{f}}\right]
	\leq m(\theta)\, r(\beta) + \frac{1}{\beta}\E_\mu\ntr\left[\econst^{\theta\mtx{f}}\log\frac{\econst^{\theta\mtx{f}}}{m(\theta)}\right].
\end{equation}
Thus, the two terms on the right-hand side of~\eqref{step:exponential_2}
have matching bounds.

Sequence the displays \eqref{step:exponential_2}, \eqref{step:exponential_3}, and \eqref{step:exponential_4}
to reach
\begin{equation}\label{step:exponential_6}
-\E_\mu \ntr \left[\mL(P_t\mtx{f})\,\econst^{\theta\mtx{f}}\right] \leq \econst^{-t/c}\theta \left( m(\theta)\, r(\beta) + \frac{1}{\beta}\E_\mu\ntr\left[\econst^{\theta\mtx{f}}\log\frac{\econst^{\theta\mtx{f}}}{m(\theta)}\right] \right).
\end{equation}
This is the integrand in \eqref{step:exponential_1}.
Next, we simplify this expression to arrive at a differential inequality.

\subsubsection{A differential inequality}

In view of Proposition~\ref{prop:trace_mgf}\eqref{eqn:m.g.f_Property_1}, we have $\log m(\theta)\geq0$ and hence 
\[\log\frac{\econst^{\theta\mtx{f}}}{m(\theta)} = \theta \mtx{f} - \log m(\theta) \cdot \Id \preccurlyeq \theta \mtx{f}.\]
It follows that 
\begin{equation}\label{step:exponential_7}
\E_\mu\ntr\left[\econst^{\theta\mtx{f}}\log\frac{\econst^{\theta\mtx{f}}}{m(\theta)}\right]\leq \theta \E_\mu \ntr\left[\mtx{f}\,\econst^{\theta\mtx{f}}\right] = \theta\, m'(\theta).
\end{equation}
Combine \eqref{step:exponential_6} and \eqref{step:exponential_7} to reach
\[- \E_\mu \ntr \left[\mL(P_t\mtx{f})\,\econst^{\theta\mtx{f}}\right] \leq \econst^{-t/c}\theta \left(m(\theta)\,r(\beta) + \frac{\theta}{\beta} m'(\theta) \right).\]
Substitute this bound into \eqref{step:exponential_1} and compute the integral
to arrive at the differential inequality 
\begin{equation}\label{eqn:differential_inequality}
m'(\theta)\leq  c\theta \,m(\theta)\,r(\beta) + \frac{c\theta^2}{\beta} m'(\theta)\quad\text{for $\theta\geq 0$.}
\end{equation}
Finally, we need to solve for the trace mgf.

\subsubsection{Solving the differential inequality}
Fix parameters $\theta$ and $\beta$ where $0\leq \theta <\sqrt{\beta/c}$.
By rearranging the expression \eqref{eqn:differential_inequality},
we find that
\[\frac{\diff{} }{\diff \zeta}\log m(\zeta) \leq \frac{c\zeta \, r(\beta)}{1-c\zeta^2/\beta}\leq \frac{c\zeta\,r(\beta)}{1-c\theta^2/\beta}
\quad\text{for $\zeta \in (0, \theta]$.}
\]
Since $\log m(0) = 0$, we can integrate this bound over $[0,\theta]$ to obtain
\[\log m(\theta) \leq \frac{c\theta^2 r(\beta)}{2(1-c\theta^2/\beta)}.\]
This is the first claim \eqref{eqn:exponential_moment_1}. 

Moreover, it is easy to check that $r(\beta)\leq v_{\mtx{f}}$.
Since this bound is independent of $\beta$, we can take $\beta\rightarrow +\infty$ in \eqref{eqn:exponential_moment_1} to achieve \eqref{eqn:exponential_moment_2}.  This completes the proof of Theorem~\ref{thm:exponential_moment}.

\subsection{Exponential matrix concentration}
\label{sec:exponential_concentration_proof}

We are now ready to prove Theorem~\ref{thm:exponential_concentration},
the exponential matrix concentration inequality,
as a consequence of the moment bounds of Theorem~\ref{thm:exponential_moment}.
To do so, we use the standard matrix Laplace transform method,
summarized in Appendix~\ref{apdx:matrix_moments}.

\begin{proof}[Proof of Theorem~\ref{thm:exponential_concentration} from Theorem~\ref{thm:exponential_moment}] 
To obtain inequalities for the maximum eigenvalue $\lambda_{\max}$,
we apply Proposition~\ref{prop:matrix_exponential_concentration} to the random matrix $\mtx{X} = \mtx{f}(Z) -\E_\mu\mtx{f}$
where $Z \sim \mu$.
To do so, we first need to weaken the moment bound \eqref{eqn:exponential_moment_1}:
\[\log\E_\mu \ntr \econst^{\theta(\mtx{f}-\E_\mu\mtx{f})} \leq \frac{c\theta^2r(\beta)}{2(1-c\theta^2/\beta)}\leq \frac{c\theta^2r(\beta)}{2(1-\theta\sqrt{c/\beta})}\quad \text{for $0\leq \theta < \sqrt{\beta/c}$}.\]
Then substitute $c_1=c r(\beta)$ and $c_2 = \sqrt{c/\beta}$ into Proposition~\ref{prop:matrix_exponential_concentration}
to achieve the results stated in Theorem~\ref{thm:exponential_concentration}.

To obtain bounds for the minimum eigenvalue $\lambda_{\min}$, we apply Proposition~\ref{prop:matrix_exponential_concentration} instead
to the random matrix $\mtx{X} = -(\mtx{f}(Z) -\E_\mu\mtx{f})$
where $Z \sim \mu$.
\end{proof}

\section{Bakry--{\'E}mery criterion for product measures}
\label{sec:product_measure_all}

In this section, we introduce the classic Markov process for a product measure.  We check the Bakry--{\'E}mery criterion for this Markov process, which leads to matrix concentration results for product measures.
 
\subsection{Product measures and Markov processes}

Consider a product space $\Omega = \Omega_1\otimes \Omega_2\otimes \cdots\otimes \Omega_n $ equipped with a product measure $\mu = \mu_1\otimes \mu_2\otimes \cdots\otimes\mu_n$.  We can construct a Markov process $(Z_t)_{t\geq0} = (Z^1_t,Z^2_t,\dots,Z^n_t)_{t\geq 0}$ on $\Omega$ whose stationary measure is $\mu$.  Let $\{N_t^i\}_{i=1}^n$ be a sequence of independent Poisson processes.  Whenever $N_t^i$ increases for some $i$, we replace the value of $Z_t^i$ in $Z_t$ by an independent sample from $\mu_i$ while keeping the remaining coordinates fixed. 

To describe the Markov semigroup associated with this Markov process, we need some notation.
For each subset $I\subseteq \{1,\dots,n\}$ and all $z,w\in\Omega$, define the interlacing operation 
\[(z;w)_I := (\eta^1,\eta^2,\dots,\eta^n)\quad \text{where}\quad
\begin{cases}
\eta^i = w^i, & i\in I; \\
\eta^i = z^i, & i\notin I.
\end{cases}
\]
In particular, $(z;w)_{\emptyset} = z$, and we abbreviate $(z;w)_i = (z^1,\dots,z^{i-1},w^i,z^{i+1},\dots,z^n)$.  In this section, the superscript stands for the index of the coordinate.

Let $Z = (Z^1,Z^2,\dots,Z^n)\in\Omega$ be a random vector drawn from the measure $\mu$; that is, each coordinate $Z^{i}\in\Omega_i$ is drawn independently from the measure $\mu_i$.  Through this section, we write $\E_Z := \E_{Z\sim\mu}$.  The Markov semigroup $(P_t)_{t\geq 0}$ induced by the Markov process is given by 
\begin{equation}\label{eqn:tensor_Markov}
P_t\mtx{f}(z) = \sum_{I\subseteq \{1,\dots,n\}}(1-\econst^{-t})^{|I|}\econst^{-t(n-|I|)} \cdot \E_Z \mtx{f}\big((z;Z)_I\big) \quad\text{for all $z\in\Omega$.}
\end{equation}
This formula is valid for every $\mu$-integrable function $\mtx{f}:\Omega \rightarrow \mathbb{H}_d$.  The ergodicity \eqref{eqn:ergodicity} of the semigroup follows immediately from~\eqref{eqn:tensor_Markov}
because $\lim_{t\rightarrow\infty}(1-\econst^{-t})^{|I|}\econst^{-t(n-|I|)}=0$ whenever $|I|<n$.

The infinitesimal generator $\mL$ of the semigroup admits the explicit form 
\begin{equation}\label{eqn:tensor_L}
\mL\mtx{f} = \lim_{t\downarrow 0}\frac{P_t\mtx{f}-\mtx{f}}{t} = -\sum_{i=1}^n\delta_i\mtx{f}.
\end{equation}
The difference operator $\delta_i$ is given by
\[\delta_i\mtx{f}(z) := \mtx{f}(z) - \E_Z \mtx{f}\big((z;Z)_i\big)\quad \text{for all $z\in\Omega$}.\] 
This infinitesimal generator $\mL$ is well defined for all integrable functions, so the class of suitable functions contains $L_1(\mu)$. It follows from the definition of $\delta_i$ that 
\[\E_\mu[\mtx{f} \,\delta_i(\mtx{g})] = \E_\mu[\delta_i(\mtx{f}) \,\delta_i(\mtx{g})] = \E_\mu[\delta_i(\mtx{f}) \, \mtx{g}]\quad \text{for each $1\leq i\leq n$}.\]
Thus, the infinitesimal generator $\mL$ is symmetric on $L_2(\mu)$.  As a consequence, the semigroup is reversible, and the Dirichlet form is given by 
\[\mathcal{E}(\mtx{f},\mtx{g}) = \E_\mu\left[\sum_{i=1}^n\delta_i(\mtx{f})\delta_i(\mtx{g})\right]=\sum_{i=1}^n\E_Z\left[\left(\mtx{f}(Z)-\E_{\tilde{Z}}\mtx{f}((Z;\tilde{Z})_i)\right)\left(\mtx{g}(Z)-\E_{\tilde{Z}}\mtx{g}((Z;\tilde{Z})_i)\right)\right]\]
for any $\mtx{f},\mtx{g}:\Omega \rightarrow \mathbb{H}_d$, where $\tilde{Z}$ is an independent copy of $Z$. All the results above and their proofs can be found in \cite{van550probability,ABY20:Matrix-Poincare}. 

\subsection{Carr\'e du champ operators} The following lemma gives the formulas for the matrix carr\'e du champ operator and the iterated matrix carr\'e du champ operator. 

\begin{lemma}[Product measure: Carr{\'e} du champs] \label{lem:tensor_Gamma} The matrix carr\'e du champ operator $\Gamma$ and the iterated matrix carr\'e du champ operator $\Gamma_2$ of the semigroup~\eqref{eqn:tensor_Markov} are given by the formulas
\begin{align}\label{eqn:tensor_Gamma}
\Gamma(\mtx{f},\mtx{g})(z) &= \frac{1}{2}\sum_{i=1}^n \E_Z\left[\big(\mtx{f}(z)-\mtx{f}((z;Z)_i)\big)\cdot\big(\mtx{g}(z)-\mtx{g}((z;Z)_i)\big)\right]
\intertext{and}
\Gamma_2(\mtx{f},\mtx{g})(z) &= \frac{1}{4}\sum_{i=1}^n \E_{\tilde{Z}}\E_Z \Big[\big(\mtx{f}(z)-\mtx{f}((z;Z)_i)\big)\cdot\big(\mtx{g}(z)-\mtx{g}((z;Z)_i)\big)\\
& \qquad\qquad\qquad\qquad\qquad + \big(\mtx{f}((z;\tilde{Z})_i)-\mtx{f}((z;Z)_i)\big)\cdot\big(\mtx{g}((z;\tilde{Z})_i)-\mtx{g}((z;Z)_i)\big)\Big] \\
& + \frac{1}{4}\sum_{i\neq j}\E_{\tilde{Z}}\E_Z \Big[\big(\mtx{f}(z)-\mtx{f}((z;\tilde{Z})_i) - \mtx{f}((z;Z)_j) + \mtx{f}(((z;\tilde{Z})_i;Z)_j) \big)\\ 
&\qquad\qquad\qquad\qquad\qquad \times\big(\mtx{g}(z)-\mtx{g}((z;\tilde{Z})_i) - \mtx{g}((z;Z)_j) + \mtx{g}(((z;\tilde{Z})_i;Z)_j) \big) \Big].
\label{eqn:tensor_Gamma2}
\end{align}
These expressions are valid for all suitable $\mtx{f},\mtx{g}:\Omega \rightarrow \mathbb{H}_d$ and all $z\in \Omega$.
The random variables $Z$ and $\tilde{Z}$ are independent draws from the measure $\mu$.
\end{lemma}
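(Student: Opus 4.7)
The plan is to derive both formulas by direct substitution of the explicit generator $\mL = -\sum_{i=1}^n \delta_i$ from~\eqref{eqn:tensor_L} into the definitions~\eqref{eqn:definition_Gamma} and~\eqref{eqn:definition_Gamma2}. The workhorse identity, valid for any suitable matrix-valued $\mtx{h}_1,\mtx{h}_2$ and any index $i$, is
\[
\delta_i(\mtx{h}_1\mtx{h}_2)(z) - \mtx{h}_1(z)\,\delta_i\mtx{h}_2(z) - (\delta_i\mtx{h}_1)(z)\,\mtx{h}_2(z)
= -\,\E_Z\bigl[(\mtx{h}_1(z) - \mtx{h}_1((z;Z)_i))(\mtx{h}_2(z) - \mtx{h}_2((z;Z)_i))\bigr],
\]
which follows by expanding each $\delta_i$ on the left and cancelling the common $\mtx{h}_1(z)\mtx{h}_2(z)$.

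For the matrix carr\'e du champ, plugging $\mL = -\sum_i \delta_i$ into $2\Gamma(\mtx{f},\mtx{g}) = \mL(\mtx{f}\mtx{g}) - \mtx{f}\,\mL\mtx{g} - \mL(\mtx{f})\,\mtx{g}$ and applying the workhorse identity index by index yields~\eqref{eqn:tensor_Gamma} immediately.

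For the iterated carr\'e du champ, I will expand $2\Gamma_2(\mtx{f},\mtx{g}) = \mL\Gamma(\mtx{f},\mtx{g}) - \Gamma(\mtx{f},\mL\mtx{g}) - \Gamma(\mL\mtx{f},\mtx{g})$ by pushing $\mL = -\sum_j \delta_j$ into the explicit expression~\eqref{eqn:tensor_Gamma} for $\Gamma$, which turns $\Gamma_2$ into a double sum over $i,j \in \{1,\dots,n\}$. I split this sum according to whether $i = j$ or $i \neq j$. When $i = j$, two replacements at coordinate $i$ collapse via $(((z;Z)_i;\tilde{Z})_i) = (z;\tilde{Z})_i$; combined with the elementary identity $\delta_i\mtx{g}(z) - \delta_i\mtx{g}((z;Z)_i) = \mtx{g}(z) - \mtx{g}((z;Z)_i)$, which yields $\Gamma_i(\mtx{f},\mL_i\mtx{g}) = -\Gamma_i(\mtx{f},\mtx{g})$, this produces the diagonal summand of~\eqref{eqn:tensor_Gamma2} after a short calculation. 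When $i \neq j$, the replacements at coordinates $i$ and $j$ are independent, so $(((z;Z)_i;\tilde{Z})_j) = (((z;\tilde{Z})_j;Z)_i)$, and the twelve terms arising from the three pieces of $\Gamma_2$ reassemble into the product of second-order differences $\mtx{f}(z)-\mtx{f}((z;\tilde{Z})_i)-\mtx{f}((z;Z)_j)+\mtx{f}(((z;\tilde{Z})_i;Z)_j)$ and its $\mtx{g}$-counterpart that form the off-diagonal summand of~\eqref{eqn:tensor_Gamma2}.

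The main obstacle is the bookkeeping in the off-diagonal case, where three separate expectations over $\tilde{Z}$---one from the $\mL$ applied externally to $\Gamma$ and two from the $\mL$ applied inside the two slots of $\Gamma$---must be combined and reorganized as a single expectation of a product of second-order differences carrying the $\tfrac{1}{4}$ prefactor. I will manage this by writing each inner first difference $\mtx{f}(w) - \mtx{f}(w')$ inside $\Gamma_i$ as a mean-zero fluctuation under the $\tilde{Z}$-resample at coordinate $j$, forcing the unwanted cross terms to annihilate under $\E_{\tilde{Z}}$ and leaving only the symmetric four-argument combination recorded in~\eqref{eqn:tensor_Gamma2}.
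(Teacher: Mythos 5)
Your proposal is correct and follows essentially the same route as the paper: substitute the explicit generator $\mL = -\sum_i \delta_i$ into the definitions of $\Gamma$ and $\Gamma_2$, expand $\Gamma_2$ as a double sum over coordinates, and split into the $i=j$ case (using the collapse of repeated resamples at one coordinate) and the $i\neq j$ case (using that resamples at distinct coordinates commute), which is exactly how the paper assembles \eqref{eqn:tensor_Gamma} and \eqref{eqn:tensor_Gamma2}. The only cosmetic difference is that you package the first-order computation in a per-coordinate ``workhorse identity'' and the relation $\Gamma_i(\mtx{f},\delta_i\mtx{g})=\Gamma_i(\mtx{f},\mtx{g})$, whereas the paper writes out the three terms $\mL\Gamma(\mtx{f},\mtx{g})$, $\Gamma(\mL\mtx{f},\mtx{g})$, $\Gamma(\mtx{f},\mL\mtx{g})$ in full before recombining; the content is the same.
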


\begin{proof}[Proof of Lemma~\ref{lem:tensor_Gamma}] The expression \eqref{eqn:tensor_Gamma} is a consequence of the form \eqref{eqn:tensor_L} of the infinitesimal generator and the definition \eqref{eqn:definition_Gamma} of the carr\'e du champ operator $\Gamma$. Further, the following displays are consequences of \eqref{eqn:tensor_L} and \eqref{eqn:tensor_Gamma}. 
\begin{align*}
& \mL\Gamma(\mtx{f},\mtx{g})(z) \\
&\qquad = -\sum_{i=1}^n\delta_i\Gamma(\mtx{f},\mtx{g})(z)\\
&\qquad = -\frac{1}{2}\sum_{i,j=1}^n\E_{\tilde{Z}}\E_{Z} \Big[\big(\mtx{f}(z)-\mtx{f}((z;Z)_j)\big)\cdot\big(\mtx{g}(z)-\mtx{g}((z;Z)_j)\big) \\
&\qquad\qquad\qquad\qquad\qquad\qquad\quad - \big(\mtx{f}((z;\tilde{Z})_i)-\mtx{f}(((z;\tilde{Z})_i;Z)_j)\big)\cdot\big(\mtx{g}((z;\tilde{Z})_i)-\mtx{g}(((z;\tilde{Z})_i;Z)_j)\big)\Big].\\
& \Gamma(\mtx{f},\mL\mtx{g})(z)\\
&\qquad = -\sum_{i=1}^n\Gamma(\mtx{f},\delta_i\mtx{g})(z)\\
&\qquad = -\frac{1}{2}\sum_{i,j=1}^n\E_{Z}\Big[\big(\mtx{f}(z)-\mtx{f}((z;Z)_j)\big)\\
& \qquad\qquad\qquad\qquad\qquad\qquad\quad \times \big(\mtx{g}(z)-\E_{\tilde{Z}}\big[\mtx{g}((z;\tilde{Z})_i)\big] - \mtx{g}((z;Z)_j) + \E_{\tilde{Z}}\big[\mtx{g}(((z;Z)_j;\tilde{Z})_i)\big]\big)\Big]\\
&\qquad = -\frac{1}{2}\sum_{i,j=1}^n\E_{\tilde{Z}}\E_{Z}\Big[\big(\mtx{f}(z)-\mtx{f}((z;Z)_j)\big)\\
&\qquad \qquad\qquad\qquad\qquad\qquad\quad \times \big(\mtx{g}(z)-\mtx{g}((z;\tilde{Z})_i) - \mtx{g}((z;Z)_j) + \mtx{g}(((z;Z)_j;\tilde{Z})_i)\big)\Big].\\
&\Gamma(\mL\mtx{f},\mtx{g})(z)\\
&\qquad = -\sum_{i=1}^n\Gamma(\delta_i\mtx{f},\mtx{g})(z)\\
&\qquad = -\frac{1}{2}\sum_{i,j=1}^n\E_{Z}\Big[\big(\mtx{f}(z)-\E_{\tilde{Z}}\big[\mtx{f}((z;\tilde{Z})_i)\big] - \mtx{f}((z;Z)_j) + \E_{\tilde{Z}}\big[\mtx{f}(((z;Z)_j;\tilde{Z})_i)\big]\big)\\
&\qquad\qquad\qquad\qquad\qquad\qquad\quad \times \big(\mtx{g}(z)-\mtx{g}((z;Z)_j)\big)\Big]\\
&\qquad = -\frac{1}{2}\sum_{i,j=1}^n\E_{\tilde{Z}}\E_{Z}\Big[\big(\mtx{f}(z)-\mtx{f}((z;\tilde{Z})_i) - \mtx{f}((z;Z)_j) + \mtx{f}(((z;Z)_j;\tilde{Z})_i)\big)\\
& \qquad\qquad\qquad\qquad\qquad\qquad\quad \times \big(\mtx{g}(z)-\mtx{g}((z;Z)_j)\big)\Big].
\end{align*}
If $j=i$, then $((z;\tilde{Z})_i;Z)_j = (z;Z)_i$ and $((z;Z)_j;\tilde{Z})_i = (z;\tilde{Z})_i$.  But if $j\neq i$, then $((z;Z)_j;\tilde{Z})_i = ((z;\tilde{Z})_i;Z)_j$. Therefore, by the definition \eqref{eqn:definition_Gamma2} of iterated carr\'e du champ operator $\Gamma_2$, we can compute that
\begin{align*}
&\Gamma_2(\mtx{f},\mtx{g})(z)\\
&\qquad =  \frac{1}{4}\sum_{i,j=1}^n\E_{\tilde{Z}}\E_{Z}\Big[\big(\mtx{f}(z)-\mtx{f}((z;Z)_j)\big)\cdot\big(\mtx{g}(z)-\mtx{g}((z;Z)_j)\big) \\
& \qquad\qquad\qquad\qquad\qquad\qquad\quad + \big(\mtx{f}((z;\tilde{Z})_i)-\mtx{f}(((z;\tilde{Z})_i;Z)_j)\big)\cdot\big(\mtx{g}((z;\tilde{Z})_i)-\mtx{g}(((z;\tilde{Z})_i;Z)_j)\big)\\
& \qquad\qquad\qquad\qquad\qquad\qquad\quad - \big(\mtx{f}(z)-\mtx{f}((z;Z)_j)\big)\cdot\big(\mtx{g}((z;\tilde{Z})_i) - \mtx{g}(((z;Z)_j;\tilde{Z})_i)\big)\\
& \qquad\qquad\qquad\qquad\qquad\qquad\quad - \big(\mtx{f}((z;\tilde{Z})_i) - \mtx{f}(((z;Z)_j;\tilde{Z})_i)\big)\cdot\big(\mtx{g}(z)-\mtx{g}((z;Z)_j)\big)\Big]\\
&\qquad = \frac{1}{4}\sum_{i=1}^n\E_{\tilde{Z}}\E_Z \Big[\big(\mtx{f}(z)-\mtx{f}((z;Z)_i)\big)\cdot\big(\mtx{g}(z)-\mtx{g}((z;Z)_i)\big) \\
& \qquad\qquad\qquad\qquad\qquad\qquad\quad + \big(\mtx{f}((z;\tilde{Z})_i)-\mtx{f}((z;Z)_i)\big)\cdot\big(\mtx{g}((z;\tilde{Z})_i)-\mtx{g}((z;Z)_i)\big)\Big] \\
&\qquad\qquad + \frac{1}{4}\sum_{i\neq j}\E_{\tilde{Z}}\E_Z \Big[\big(\mtx{f}(z)-\mtx{f}((z;\tilde{Z})_i) - \mtx{f}((z;Z)_j) + \mtx{f}(((z;\tilde{Z})_i;Z)_j) \big)\\ 
&\qquad\qquad\qquad\qquad\qquad\qquad\qquad\quad \times \big(\mtx{g}(z)-\mtx{g}((z;\tilde{Z})_i) - \mtx{g}((z;Z)_j) + \mtx{g}(((z;\tilde{Z})_i;Z)_j) \big) \Big].
\end{align*}
This gives the expression \eqref{eqn:tensor_Gamma2}.
\end{proof}

\subsection{Bakry--\'Emery criterion}

It is clear from Lemma~\ref{lem:tensor_Gamma} that the formula~\eqref{eqn:tensor_Gamma} for $\Gamma$ appears within the formula~\eqref{eqn:tensor_Gamma2} for $\Gamma_2$.  We immediately conclude that the Bakry--{\'E}mery criterion holds. 

\begin{theorem}[Product measure: Bakry--{\'E}mery] \label{thm:product_measure_localPoincare} For the semigroup~\eqref{eqn:tensor_Markov}, the Bakry--\'Emery criterion \eqref{Bakry-Emery} holds with $c = 2$.
That is, for any suitable function $f:\Omega \rightarrow \mathbb{R}$,
\begin{equation*}\label{eqn:product_measure_localPoincare}
\Gamma(f)\leq 2\Gamma_2(f).
\end{equation*}
\end{theorem}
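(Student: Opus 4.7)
My approach will be to read off the scalar Bakry--\'Emery inequality directly from the explicit formulas for $\Gamma$ and $\Gamma_2$ supplied by Lemma~\ref{lem:tensor_Gamma}, specialized to the scalar case $d=1$. In that case, every matrix product degenerates to an ordinary product of real numbers, and in particular each ``matrix square'' $\mtx{A}\cdot\mtx{A}$ becomes a nonnegative real square. The inequality $\Gamma(f) \leq 2\Gamma_2(f)$ should then follow immediately by discarding nonnegative terms from the expansion of $\Gamma_2(f)$, retaining only a ``diagonal'' piece that matches $\tfrac{1}{2}\Gamma(f)$.

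Concretely, I would write out $\Gamma_2(f)(z)$ from~\eqref{eqn:tensor_Gamma2} with both slots set to $f$. The resulting expression decomposes naturally into three groups of summands: (a) the diagonal contribution $\tfrac{1}{4}\sum_i \E_{\tilde Z}\E_Z[(f(z)-f((z;Z)_i))^2]$, which actually does not depend on $\tilde Z$ and therefore collapses to $\tfrac{1}{4}\sum_i \E_Z[(f(z)-f((z;Z)_i))^2]$; (b) the resampling contribution $\tfrac{1}{4}\sum_i \E_{\tilde Z}\E_Z[(f((z;\tilde Z)_i)-f((z;Z)_i))^2]$; and (c) the off-diagonal contribution $\tfrac{1}{4}\sum_{i\neq j}\E_{\tilde Z}\E_Z[(\Delta_{ij}f)^2]$, where $\Delta_{ij}f$ denotes the discrete mixed second difference $f(z) - f((z;\tilde Z)_i) - f((z;Z)_j) + f(((z;\tilde Z)_i;Z)_j)$. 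In the scalar setting, each of these summands is a (conditional expectation of a) square of a real number, hence pointwise nonnegative.

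Comparing group (a) with the formula~\eqref{eqn:tensor_Gamma} for $\Gamma$ shows that (a) equals $\tfrac{1}{2}\Gamma(f)(z)$ exactly. Discarding the nonnegative contributions (b) and (c) then yields the pointwise bound $\Gamma_2(f)(z) \geq \tfrac{1}{2}\Gamma(f)(z)$ for every $z\in\Omega$, which rearranges to the stated inequality $\Gamma(f) \leq 2\Gamma_2(f)$ with constant $c=2$.

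I do not anticipate any genuine obstacle here: the real work has already been done in Lemma~\ref{lem:tensor_Gamma}, and the scalar reduction turns the remaining step into an appeal to the elementary fact that real squares are nonnegative. I note in passing that the same bookkeeping would establish the matrix Bakry--\'Emery inequality directly, since each matrix square in~\eqref{eqn:tensor_Gamma2} is positive semidefinite; but in view of Proposition~\ref{prop:BE_equiv}, verifying only the scalar statement is sufficient to recover the matrix version for every $d\in\N$ with the same constant.
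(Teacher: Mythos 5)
Your proposal is correct and is essentially the paper's own argument: compare the formulas of Lemma~\ref{lem:tensor_Gamma} with $f=g$, observe that the diagonal terms equal $\tfrac{1}{2}\Gamma(f)$, and drop the remaining nonnegative squares to get $\Gamma_2(f)\geq\tfrac{1}{2}\Gamma(f)$. The closing remark about lifting to matrices via Proposition~\ref{prop:BE_equiv} (or directly, since the discarded matrix terms are positive semidefinite) also matches the paper's treatment.
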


\begin{proof} Comparing the two expressions in Lemma~\ref{lem:tensor_Gamma} with $f=g$ gives 
\begin{align*}
\Gamma_2(f)(z) &= \frac{1}{4}\sum_{i=1}^n \E_{\tilde{Z}}\E_Z \Big[\big(f(z)-f((z;Z)_i)\big)^2 + \left(f((z;\tilde{Z})_i)-f((z;Z)_i)\right)^2\Big] \\
&\qquad + \frac{1}{4}\sum_{i\neq j} \E_{\tilde{Z}}\E_Z \Big[\Big(f(z)-f((z;\tilde{Z})_i) - f((z;Z)_j) + f(((z;\tilde{Z})_i;Z)_j) \Big)^2\Big]\\
&\geq \frac{1}{4}\sum_{i=1}^n \E_Z \left[\big(f(z)-f((z;Z)_i)\big)^2\right]\\
&= \frac{1}{2}\Gamma(f)(z),
\end{align*}
which is the stated inequality.
\end{proof}

After completing this paper, we learned that Theorem~\ref{thm:product_measure_localPoincare}
appears in \cite[Example 6.6]{junge2015noncommutative} with a different style of proof.

\begin{remark}[Matrix Poincar\'e inequality: Constants]
Following the discussion in Section~\ref{sec:local_matrix_Poincare_inequality}, Theorem~\ref{thm:product_measure_localPoincare} implies the matrix Poincar\'e inequality~\eqref{eqn:matrix_Poincare} with $\alpha = 2$. However, Aoun et al.~\cite{ABY20:Matrix-Poincare} proved that the Markov process~\eqref{eqn:tensor_Markov} actually satisfies the matrix Poincar\'e inequality with $\alpha = 1$; see also \cite[Theorem 5.1]{cheng2016characterizations}.  This gap is not surprising because the averaging operation that is missing in the local Poincar\'e inequality contributes to the global convergence of the Markov semigroup.
\end{remark}

\subsection{Matrix concentration results}
\label{sec:concentration_results_product}
In this subsection, we complete the proofs of the matrix concentration results for product measures stated in Section~\ref{sec:main_results}.  

For a product measure $\mu = \mu_1\otimes\mu_2\otimes\cdots\otimes\mu_n$, Theorem~\ref{thm:product_measure_localPoincare} shows that there is a reversible ergodic Markov semigroup whose stationary measure is $\mu$ and which satisfies the Bakry--\'Emery criterion \eqref{Bakry-Emery} with constant $c=2$. We then apply Theorem~\ref{thm:polynomial_moment} with $c=2$ to obtain the polynomial moment bounds in Corollary~\ref{cor:product_measure_Efron--Stein}. Similarly, we apply Theorem~\ref{thm:exponential_concentration} with $c=2$ to obtain the subgaussian concentration inequalities in Corollary~\ref{cor:product_measure_tailbound}.

\section{Bakry--{\'E}mery criterion for log-concave measures}
\label{sec:log-concave}

In this section, we study a class of log-concave measures; the most important example in this class is the standard Gaussian measure.  First, we introduce the standard diffusion process associated with a log-concave measure.  We verify that the associated semigroup is reversible and ergodic via standard arguments. Then we introduce the Bakry--{\'E}mery criterion which follows from the uniform strong convexity of the potential.

\subsection{Log-concave measures and Markov processes}

Consider the Markov processes $(Z_t)_{t\geq 0}$ on $\mathbb{R}^n$ generated by the stochastic differential equation:
\begin{equation}\label{eqn:SDE}
\diff Z_t = -\nabla W(Z_t)\idiff{t} + \sqrt{2}\idiff{B_t},
\end{equation}
where $B_t$ is the standard $n$-dimensional Brownian motion and $W:\mathbb{R}^n\rightarrow \mathbb{R}$ is a smooth convex function. The stationary measure $\mu$ of this process has the density $\diff \mu = \rho^\infty(z)\idiff{z} = M^{-1}\econst^{-W(z)}\idiff{z}$, where $M := \int_{\mathbb{R}^n}\econst^{-W(z)}\idiff z$ is a normalization constant.  The infinitesimal generator $\mL$ is given by
\begin{equation}\label{eqn:log-concave_L}
\mL\mtx{f}(z) = -\sum_{i=1}^n\partial_iW(z)\cdot\partial_i\mtx{f}(z) + \sum_{i=1}^n\partial_i^2\mtx{f}(z)
\quad\text{for all $z=(z_1,\dots,z_n)\in \mathbb{R}^n$.}
\end{equation}
The class of suitable functions is the Sobolev space $\mathrm{H}_{2,\mu}(\mathbb{R}^n;\mathbb{H}_d)$, defined in~\eqref{def:H2_function}. Here and elsewhere, $\partial_i$ means $\partial/\partial z_i$ and $\partial_{ij}$ means $\partial^2/(\partial z_i\partial z_j)$ for all $i,j=1,\dots,n$. 

\subsubsection{Reversibility} The reversibility of this Markov $(Z_t)_{t\geq0}$ can be verified with a standard calculation. We restrict our attention to functions in $\mathrm{H}_{2,\mu}(\mathbb{R}^n;\mathbb{H}_d)$.  Integration by parts yields 
\begin{align*}
\E_\mu[\mL(\mtx{f})\mtx{g}] =&\ \int_{\mathbb{R}^n}\left(-\sum_{i=1}^n\partial_iW(z)\cdot\partial_i\mtx{f}(z) + \sum_{i=1}^n\partial_i^2\mtx{f}(z)\right)\mtx{g}(z)\rho^\infty(z)\idiff z\\
=&\ -\sum_{i=1}^n\int_{\mathbb{R}^n}\partial_i\mtx{f}(z)\cdot \partial_i\mtx{g}(z)\cdot\rho^\infty(z)\idiff z\\
=&\ \int_{\mathbb{R}^n}\mtx{f}(z)\left(-\sum_{i=1}^n\partial_iW(z)\cdot\partial_i\mtx{g}(z) + \sum_{i=1}^n\partial_i^2\mtx{g}(z)\right)\rho^\infty(z)\idiff z\\
=&\ \E_\mu[\mtx{f}\mL(\mtx{g})].
\end{align*}
This shows that $\mL$ is symmetric on $L_2(\mu)$ and thus $(Z_t)_{t\geq 0}$ is reversible. From the calculation above, we also obtain a simple formula for the associated Dirichlet form:
\[\mathcal{E}(\mtx{f},\mtx{g}) = \sum_{i=1}^n\E_\mu\left[\partial_i\mtx{f}\cdot \partial_i\mtx{g}\right]\quad \text{for all $\mtx{f},\mtx{g}\in \mathrm{H}_{2,\mu}(\mathbb{R}^n;\mathbb{H}_d)$}.\]
These results parallel the scalar case, but the partial derivatives are matrix-valued.

\subsubsection{Ergodicity} We now turn to the ergodicity of the Markov process given by \eqref{eqn:SDE}, which generally reduces to studying the convergence of the corresponding Fokker--Planck equation:
\begin{equation}\label{eqn:Fokker-Planck}
\begin{cases}
\frac{\partial}{\partial t}\rho_{x}(z,t) = \mL^*\rho_{x}(z,t) := \sum_{i=1}\partial_i(\partial_iW(z)\rho_{x}(z,t)) + \sum_{i=1}^n\partial_i^2\rho_{x}(z,t); \\[3pt]
\rho_{x}(z,0) = \delta(z-x).
\end{cases}
\end{equation}
We define $\rho_{x}(z,t)$ to be the density of $Z_t$, conditional on $Z_0 = x\in\mathbb{R}^n$.  As usual, $\delta(z-x)$ is the Dirac distribution centered at $x$. The associated Markov semigroup $(P_t)_{t\geq 0}$ can be recognized as
\begin{equation}\label{eqn:log-concave_semigroup}
P_t \mtx{f}(x) = \E_\mu\left[\mtx{f}(Z_t) \,|\, Z_0 = x \right] = \int_{\mathbb{R}^n} \mtx{f}(z) \rho_{x}(z,t) \idiff z \quad \text{for all $t\geq0$ and all $x\in \mathbb{R}^n$ }.
\end{equation}
The semigroup $(P_t)_{t\geq 0}$ is ergodic in the sense of \eqref{eqn:ergodicity} if and only if $\rho_{x}(\cdot,t)$ converges weakly to $\rho^\infty$ for all $x\in \mathbb{R}^n$.

A fundamental way to prove the convergence of \eqref{eqn:Fokker-Planck} to the stationary density $\rho^\infty$ is through the method of Lyapunov functions~\cite{hairer2010convergence,ji2019convergence}.  However, ergodicity in the weak sense follows more easily from the assumption that the function $W$ is uniformly strongly convex.  That is,
\[
(\operatorname{Hess} W)(z) := \big[\partial_{ij} W(z)\big]_{i,j=1}^n
	\succcurlyeq \eta \cdot \Id_n
	\quad\text{for all $z \in \mathbb{R}^n$.}
\]
To see this, recall the Brascamp--Lieb inequality~\cite[Theorem 4.1]{BRASCAMP1976366},
which states that the (ordinary) variance of a scalar function $h:\mathbb{R}^n\rightarrow \mathbb{R}$
is bounded as
\[\Var_\mu[h]\leq \int_{\mathbb{R}^n} (\nabla h(z))^\trsp\big((\operatorname{Hess} W)(z)\big)^{-1}\nabla h(z) \idiff \mu(z).\]
Combine the last two displays to arrive at the Poincar\'e inequality $\Var_\mu[h]\leq \eta^{-1}\mathcal{E}(h)$.

Next, consider the scalar function $\phi_{x}(z,t) := (\rho_{x}(z,t) - \rho^\infty(z))/\rho^\infty(z)$.  Let us check that its variance $\Var_\mu[\phi_{x}(\cdot,t)]$ converges to $0$ exponentially fast.  Indeed, it is not hard to verify that $\phi_{x}(z,t)$ satisfies the partial differential equation
\[\frac{\partial}{\partial t} \phi_{x}(z,t) = \mL\phi_{x}(z,t) \quad \text{for $t\geq0$ and $z\in \mathbb{R}^n$}.\]
Along with the Poincar\'e inequality and the fact that $\E_\mu\phi_{x}(\cdot,t) = 0$, this implies 
\[\frac{\diff{} }{\diff t} \Var_\mu[\phi_{x}(\cdot,t)]  = - 2\mathcal{E}(\phi_{x}(\cdot,t))\leq  - 2\eta \Var_\mu[\phi_{x}(\cdot,t)]. \]
Therefore, the quantity $\Var_\mu[\phi_{x}(\cdot,t)]$ converges to $0$ exponentially fast because 
\[\Var_\mu[\phi_{x}(\cdot,t)] \leq \econst^{-2\eta (t-t_0)} \Var_\mu[\phi_{x}(\cdot,t_0)]\quad \text{for}\ t\geq t_0>0.\]
As a consequence, for any $f\in \mathrm{H}_{2,\mu}(\mathbb{R}^n;\mathbb{R})$ and any $x\in \mathbb{R}^n$, 
\begin{align*}
\left|P_tf(x) - \E_\mu f\right| &= \left|\int_{\mathbb{R}^n} f(z)(\rho_{x}(z,t)-\rho^\infty(z))\idiff z \right| = \left|\int_{\mathbb{R}^n} f(z)\rho^\infty(z)\phi_{x}(z,t)\idiff z\right|  \\
&\leq \int_{\mathbb{R}^n} |f(z)|\cdot\rho^\infty(z)\cdot|\phi_{x}(z,t)|\idiff z \leq \left(\E_\mu |f|^2\right)^{1/2}\Var_\mu[\phi_{x}(\cdot,t)]^{1/2} \rightarrow 0.
\end{align*}
This justifies the pointwise convergence of $P_t\mtx{f}$, which is stronger than the $L_2(\mu)$ ergodicity \eqref{eqn:ergodicity} of the semigroup $(P_t)_{t\geq0}$. 

\subsection{Carr\'e du champ operators}
After checking reversibility and ergodicity, we now turn to the derivation of the matrix carr\'e du champ operator and the iterated matrix carr\'e du champ operator. Their explicit forms are given in the next lemma.

\begin{lemma}[Log-concave measure: Carr{\'e} du champs] \label{lem:log-concave_Gamma} The matrix carr\'e du champ operator $\Gamma$ and the iterated matrix carr\'e du champ operator $\Gamma_2$ of the Markov process defined by~\eqref{eqn:SDE} are given by the formulas 
\begin{equation}\label{eqn:log-concave_Gamma}
\Gamma(\mtx{f},\mtx{g})= \sum_{i=1}^n\partial_i\mtx{f}\cdot \partial_i\mtx{g}
\end{equation}
and 
\begin{equation}\label{eqn:log-concave_Gamma2}
\Gamma_2(\mtx{f},\mtx{g}) = \sum_{i,j=1}^n\partial_{ij}W\cdot \partial_i\mtx{f} \cdot \partial_j\mtx{g} + \sum_{i,j=1}^n\partial_{ij}\mtx{f}\cdot \partial_{ij}\mtx{g}
\end{equation}
for all suitable $\mtx{f},\mtx{g}:\mathbb{R}^n\rightarrow\mathbb{H}_d$. 
\end{lemma}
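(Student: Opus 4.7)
The plan is to verify both formulas by direct computation, starting from the definitions~\eqref{eqn:definition_Gamma} and~\eqref{eqn:definition_Gamma2} and the explicit form~\eqref{eqn:log-concave_L} of $\mL$. The derivation mirrors the classical scalar case: the only modification needed for matrix-valued $\mtx{f},\mtx{g}$ is to preserve the left-right order of every factor, since $\mL$ is assembled out of scalar differential operators and the Leibniz rule $\partial_i(\mtx{f}\mtx{g}) = (\partial_i\mtx{f})\mtx{g} + \mtx{f}(\partial_i\mtx{g})$ respects that order.

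For~\eqref{eqn:log-concave_Gamma}, I would expand $\mL(\mtx{f}\mtx{g})$ using the product rule for $\partial_i$ and $\partial_i^2$. The drift contributions $-\sum_i\partial_iW\cdot\partial_i(\mtx{f}\mtx{g})$ split and cancel with the drift pieces of $\mtx{f}\,\mL\mtx{g}$ and $(\mL\mtx{f})\mtx{g}$, and similarly the diagonal diffusion pieces $(\partial_i^2\mtx{f})\mtx{g}$ and $\mtx{f}(\partial_i^2\mtx{g})$ cancel. Only the cross term $2\sum_i (\partial_i\mtx{f})(\partial_i\mtx{g})$ hiding inside $\partial_i^2(\mtx{f}\mtx{g})$ survives, and dividing by two yields~\eqref{eqn:log-concave_Gamma}.

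For~\eqref{eqn:log-concave_Gamma2}, I would apply $\mL$ to the formula~\eqref{eqn:log-concave_Gamma} just obtained, and separately expand $\Gamma(\mtx{f},\mL\mtx{g})$ and $\Gamma(\mL\mtx{f},\mtx{g})$ using
\[
\partial_i\mL\mtx{g} = -\sum_k\partial_{ik}W\cdot\partial_k\mtx{g} - \sum_k\partial_kW\cdot\partial_{ik}\mtx{g} + \sum_k\partial_k^2\partial_i\mtx{g}
\]
and the analogous identity for $\mtx{f}$. Organizing the resulting terms by type, every contribution containing a factor $\partial_kW$ cancels across the three ingredients (after using $\partial_{ki}=\partial_{ik}$), as do the ``pure'' third derivatives $(\partial_k^2\partial_i\mtx{f})(\partial_i\mtx{g})$ and $(\partial_i\mtx{f})(\partial_k^2\partial_i\mtx{g})$. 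What remains is $2\sum_{i,j}(\partial_{ij}\mtx{f})(\partial_{ij}\mtx{g})$ coming from the diffusion part of $\mL\Gamma$, together with the Hessian-of-$W$ piece $\sum_{i,j}\partial_{ij}W\,[(\partial_i\mtx{f})(\partial_j\mtx{g}) + (\partial_j\mtx{f})(\partial_i\mtx{g})]$ produced when differentiating the drift coefficients inside $\Gamma(\mtx{f},\mL\mtx{g})$ and $\Gamma(\mL\mtx{f},\mtx{g})$. Dividing by two and invoking $\partial_{ij}W=\partial_{ji}W$ to relabel $\sum_{i,j}\partial_{ij}W(\partial_j\mtx{f})(\partial_i\mtx{g}) = \sum_{i,j}\partial_{ij}W(\partial_i\mtx{f})(\partial_j\mtx{g})$, the survivors match~\eqref{eqn:log-concave_Gamma2} exactly.

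The main obstacle is purely combinatorial: matrix multiplication is noncommutative, so one must scrupulously preserve every left-right ordering while differentiating and may only swap summation indices by invoking scalar symmetries such as $\partial_{ij}W=\partial_{ji}W$ or equality of mixed partials. A minor ancillary point is that formula~\eqref{eqn:log-concave_Gamma2} involves third derivatives of $\mtx{f}$ and $\mtx{g}$, which exceeds the Sobolev regularity built into~\eqref{def:H2_function}; this is handled by first carrying out the computation on smooth compactly supported test functions and then extending by density, in line with the ``suitable functions'' convention of Section~\ref{sec:matrix_Markov_semigroups}.
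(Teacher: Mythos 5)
Your proposal is correct and follows essentially the same route as the paper: a direct expansion of $\mL(\mtx{f}\mtx{g})$, $\mL\Gamma(\mtx{f},\mtx{g})$, $\Gamma(\mL\mtx{f},\mtx{g})$, and $\Gamma(\mtx{f},\mL\mtx{g})$ from the explicit generator \eqref{eqn:log-concave_L}, with the same cancellation of all $\partial_k W$ and pure third-derivative terms and the same symmetrization of the Hessian-of-$W$ term via $\partial_{ij}W=\partial_{ji}W$. Your closing remark about handling the third-derivative regularity by first working with smooth test functions is a reasonable supplement consistent with the paper's ``suitable functions'' convention, though the paper does not dwell on it.
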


\begin{proof}[Proof of Lemma~\ref{lem:log-concave_Gamma}]
Knowing the explicit form \eqref{eqn:log-concave_L} of the Markov generator $\mL$, we can compute the carr\'e du champ operator $\Gamma$ as
\begin{align*}
\Gamma(\mtx{f},\mtx{g})=&\ \frac{1}{2}\sum_{i=1}^n\left(-\partial_i W\cdot \partial_i(\mtx{f}\mtx{g}) + \partial_i^2(\mtx{f}\mtx{g}) - \big(-\partial_i W\cdot\partial_i \mtx{f}  + \partial_i^2\mtx{f}\big)\mtx{g} - \mtx{f}\big(-\partial_i W\cdot \partial_i\mtx{g} + \partial_i^2\mtx{g}\big)\right)\\
=&\ \sum_{i=1}^n\partial_i\mtx{f}\cdot \partial_i\mtx{g}.
\end{align*} 
Moreover, combining the expressions \eqref{eqn:log-concave_L} and \eqref{eqn:log-concave_Gamma} yields the following: 
\begin{align*}
\mL\Gamma(\mtx{f},\mtx{g}) =&\ -\sum_{i=1}^n\partial_iW\cdot \partial_i\left(\sum_{j=1}^n\partial_j\mtx{f}\cdot \partial_j\mtx{g}\right) + \sum_{i=1}^n\partial_i^2\left(\sum_{j=1}^n\partial_j\mtx{f}\cdot \partial_j\mtx{g}\right)\\
=&\ \sum_{i,j}^n\left(-\partial_iW\cdot \partial_{ij}\mtx{f}\cdot \partial_j\mtx{g} - \partial_iW\cdot \partial_j\mtx{f}\cdot \partial_{ij}\mtx{g} + \partial_i^2(\partial_j\mtx{f})\cdot \partial_j\mtx{g}+2\partial_{ij}\mtx{f}\cdot\partial_{ij}\mtx{g}+ \partial_j\mtx{f}\cdot \partial_i^2(\partial_j\mtx{g})\right).\\
\Gamma(\mL\mtx{f},\mtx{g}) =&\ \sum_{j=1}^n\partial_j\left(\sum_{i=1}^n \big(- \partial_iW\cdot \partial_i\mtx{f} + \partial_i^2\mtx{f})\right)\cdot \partial_j\mtx{g}\\
=&\ \sum_{i,j=1}^n\left(-\partial_{ij}W\cdot \partial_i\mtx{f}\cdot \partial_j\mtx{g} - \partial_iW\cdot \partial_{ij}\mtx{f}\cdot \partial_j\mtx{g} + \partial_i^2(\partial_j\mtx{f})\cdot \partial_j\mtx{g}\right).\\
\Gamma(\mtx{f},\mL\mtx{g}) =&\ \sum_{j=1}^n\partial_j\mtx{f}\cdot \partial_j\left(\sum_{i=1}^n \big(- \partial_iW \cdot\partial_i\mtx{g} + \partial_i^2\mtx{g})\right)\\
=&\ \sum_{i,j=1}^n\left(-\partial_{ij}W\cdot \partial_j\mtx{f}\cdot \partial_i\mtx{g} - \partial_iW\cdot \partial_j\mtx{f}\cdot \partial_{ij}\mtx{g} + \partial_j\mtx{f}\cdot \partial_i^2(\partial_j\mtx{g})\right).
\end{align*}
Then we can compute that
\begin{align*}
\Gamma_2(\mtx{f},\mtx{g}) =&\ \frac{1}{2}\left(\mL\Gamma(\mtx{f},\mtx{g}) -\Gamma(\mL\mtx{f},\mtx{g}) -\Gamma(\mtx{f},\mL\mtx{g})\right)\\
=&\ \frac{1}{2}\sum_{i,j=1}^n\left(\partial_{ij}W\cdot \partial_i\mtx{f}\cdot \partial_j\mtx{g}+ \partial_{ij}W\cdot \partial_j\mtx{f}\cdot \partial_i\mtx{g}\right) + \sum_{i,j=1}^n\partial_{ij}\mtx{f}\cdot \partial_{ij}\mtx{g}\\
=&\ \sum_{i,j=1}^n\partial_{ij}W\cdot \partial_i\mtx{f}\cdot \partial_j\mtx{g} + \sum_{i,j=1}^n\partial_{ij}\mtx{f}\cdot \partial_{ij}\mtx{g}.
\end{align*}
This gives the expression \eqref{eqn:log-concave_Gamma2}.
\end{proof}

\subsection{Bakry--\'Emery criterion} It is a well-known result that a Bakry--\'Emery criterion follows from the uniform strong convexity of $W$. For example, see the discussion in \cite[Sec. 4.8]{bakry2013analysis}. Nevertheless, we provide a short proof here for the sake of completeness.

\begin{fact}[Log-concave measure: Matrix Bakry--{\'E}mery] \label{fact:log-concave_localPoincare}
Consider the Markov process defined by \eqref{eqn:SDE}.  If the potential $W:\mathbb{R}\rightarrow\mathbb{R}$ satisfies $(\operatorname{Hess} W)(z)\succcurlyeq \eta \cdot \Id_n $ for all $z\in \mathbb{R}^n$ for some constant $\eta>0$, then the Bakry--\'Emery criterion \eqref{Bakry-Emery} holds with $c = \eta^{-1}$. That is, for any suitable function $f:\mathbb{R}^n\rightarrow\mathbb{R}$,
\begin{equation*}\label{eqn:log-concave_localPoincare}
\Gamma(f)\preccurlyeq \eta^{-1}\Gamma_2(f).
\end{equation*} 
\end{fact}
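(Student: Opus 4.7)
The plan is to evaluate both $\Gamma(f)$ and $\Gamma_2(f)$ on a scalar function $f:\mathbb{R}^n\to\mathbb{R}$ using the explicit formulas from Lemma~\ref{lem:log-concave_Gamma} and then read off the inequality directly from the assumption $\operatorname{Hess} W\succcurlyeq \eta\cdot\Id_n$. Since $f$ is real-valued, all the relevant partial derivatives $\partial_i f$ and $\partial_{ij} f$ are scalars, so the semidefinite order $\preccurlyeq$ in the conclusion is simply $\leq$, and the various products in Lemma~\ref{lem:log-concave_Gamma} commute. This turns the Bakry--\'Emery criterion into a pointwise algebraic inequality, which is all that needs to be checked.

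First I would specialize the formulas to $f=g$. From \eqref{eqn:log-concave_Gamma},
\[
\Gamma(f) \;=\; \sum_{i=1}^n (\partial_i f)^2 \;=\; \|\nabla f\|^2,
\]
and from \eqref{eqn:log-concave_Gamma2},
\[
\Gamma_2(f) \;=\; \sum_{i,j=1}^n (\partial_{ij}W)\,(\partial_i f)(\partial_j f) \;+\; \sum_{i,j=1}^n (\partial_{ij} f)^2.
\]
I would then recognize the first term of $\Gamma_2(f)$ as the quadratic form $(\nabla f)^\trsp (\operatorname{Hess} W)(\nabla f)$ evaluated pointwise at each $z\in\mathbb{R}^n$, and observe that the second term $\sum_{i,j}(\partial_{ij} f)^2 = \|\operatorname{Hess} f\|_{\mathrm{F}}^2$ is manifestly nonnegative and can be discarded.

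The hypothesis $(\operatorname{Hess} W)(z)\succcurlyeq \eta\cdot\Id_n$ then immediately gives
\[
(\nabla f)^\trsp (\operatorname{Hess} W)(\nabla f) \;\geq\; \eta\,\|\nabla f\|^2 \;=\; \eta\,\Gamma(f)
\quad\text{pointwise on $\mathbb{R}^n$.}
\]
Combining the last two observations yields $\Gamma_2(f)\geq \eta\,\Gamma(f)$, which rearranges to $\Gamma(f)\leq \eta^{-1}\Gamma_2(f)$, the claim with $c=\eta^{-1}$.

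There is no real obstacle here: the content of the fact is entirely encoded in the two formulas of Lemma~\ref{lem:log-concave_Gamma}, once one notices that the $\operatorname{Hess} f$ term in $\Gamma_2$ is positive and can be dropped. The only subtlety worth flagging is that the statement is purely scalar (it is the scalar Bakry--\'Emery criterion~\eqref{Bakry-Emery_criterion_scalar} in disguise); the matrix version for $\mtx{f}:\mathbb{R}^n\to\mathbb{H}_d$ is \emph{not} asserted here, and in fact follows from this fact only after invoking Proposition~\ref{prop:BE_equiv} to promote the scalar criterion to the matrix criterion.
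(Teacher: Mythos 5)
Your proof is correct and follows essentially the same route as the paper: specialize Lemma~\ref{lem:log-concave_Gamma} to $f=g$, drop the nonnegative Hessian-squared term, apply the strong convexity bound $(\nabla f)^\trsp(\operatorname{Hess} W)\nabla f \geq \eta\,\Gamma(f)$, and invoke Proposition~\ref{prop:BE_equiv} to pass from the scalar to the matrix criterion. Your remark that the displayed inequality is really the scalar criterion, promoted to matrices only via Proposition~\ref{prop:BE_equiv}, matches the paper's own closing sentence exactly.
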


\begin{proof} Comparing the two expressions in Lemma~\ref{lem:log-concave_Gamma} with $f=g$ gives that 
\begin{align*}
\Gamma_2(f) &= \sum_{i,j=1}^n\partial_{ij}W\cdot \partial_if\cdot \partial_jf + \sum_{i,j=1}^n(\partial_{ij}f)^2 \\
&\geq (\nabla f)^\trsp(\operatorname{Hess} W)\nabla f
\geq \eta \sum_{i=1}^n(\partial_i\mtx{f})^2
= \eta\cdot\Gamma(\mtx{f}).
\end{align*}
The second inequality follows from the uniform strong convexity of $W$.
Proposition~\ref{prop:BE_equiv} extends the scalar Bakry--{\'E}mery criterion
to matrices.
\end{proof} 

\subsection{Standard normal distribution}\label{sec:Gaussian}

The most important example of a strongly log-concave measure
occurs for the potential 
\[W(z) = \frac{1}{2}z^\trsp z\quad\text{for all $z\in \mathbb{R}^n$.}\] 
In this case, the corresponding log-concave measure $\mu$ coincides with
the density of the $n$-dimensional standard Gaussian distribution $N(\vct{0},\Id_n)$:
\[\diff \mu = \frac{1}{\sqrt{(2\pi)^n}}\exp\left(-\frac{1}{2}z^\trsp z\right) \idiff{z}\quad \text{for all $z\in \mathbb{R}^n$.}\]
The associated Markov process is known as the Ornstein--Uhlenbeck process.  The semigroup $(P_t)_{t\geq 0}$
has a simple form, given by the Mehler formula:
\[P_t\mtx{f}(z) = \E  \mtx{f}\left(\econst^{-t}z + \sqrt{1-\econst^{-2t}}\xi\right)\quad\text{where $\xi\sim N(\vct{0},\Id_n)$.} \]
The ergodicity of this Markov semigroup is obvious from the above formula because $\econst^{-t}\rightarrow 0$ as $t\rightarrow +\infty$.  Lemma~\ref{lem:log-concave_Gamma} gives the matrix carr\'e du champ operator $\Gamma$ and the iterated matrix carr\'e du champ operator $\Gamma_2$ for the Ornstein--Uhlenbeck process: 
\[\Gamma(\mtx{f},\mtx{g}) = \sum_{i=1}^n\partial_i\mtx{f}\cdot \partial_i\mtx{g}\quad \text{and}\quad \Gamma_2(\mtx{f},\mtx{g}) = \sum_{i=1}^n\partial_i\mtx{f}\cdot \partial_i\mtx{g} + \sum_{i,j=1}^n\partial_{ij}\mtx{f}\cdot \partial_{ij}\mtx{g}.\]
Clearly, $\Gamma(\mtx{f})\preccurlyeq \Gamma_2(\mtx{f})$.  Therefore, the Bakry--{\'E}mery criterion~\eqref{Bakry-Emery} holds with $c = 1$.

\subsection{Matrix concentration results}
\label{sec:concentration_results_log-concave} 

Finally, we prove the matrix concentration results for log-concave measures stated in Section~\ref{sec:main_results}.

Consider a log-concave probability measure $\diff \mu \propto \econst^{-W(z)}\idiff z$ on $\mathbb{R}^n$,
where the potential satisfies the strong convexity condition $\operatorname{Hess} W\succcurlyeq \eta\Id_n$
for $\eta > 0$.
Fact~\ref{fact:log-concave_localPoincare} states that the associated semigroup \eqref{eqn:log-concave_semigroup} satisfies the Bakry--\'Emery criterion with constant $c=\eta^{-1}$.  We then apply Theorem~\ref{thm:polynomial_moment} with $c=\eta^{-1}$ to obtain the polynomial moment bounds in Corollary~\ref{cor:log-concave_polynomial_inequality}. Similarly, we apply Theorem~\ref{thm:exponential_concentration} with $c=\eta^{-1}$ to obtain the subgaussian concentration inequalities in Corollary~\ref{cor:log-concave_concentration}.

\section{Extension to Riemannian manifolds}\label{sec:extension_Riemannian_manifold}

In this section, we give a high-level discussion about diffusion processes on Riemannian manifolds. The book \cite{bakry2013analysis} contains a comprehensive treatment of the subject. For an introduction to calculus
on Riemannian manifolds, references include \cite{petersen2016riemannian,lee2018introduction}.

\subsection{Measures on Riemannian manifolds}

Let $(M, \mathfrak{g})$ be an $n$-dimensional Riemannian manifold whose co-metric tensor $\mathfrak{g}(x) = (g^{ij}(x) : 1 \leq i,j \leq n)$ is symmetric and positive definite for every $x \in M$.  We write $\mtx{G}(x) = (g_{ij} : 1 \leq i, j \leq n)$ for the metric tensor, which satisfies the relation $\mtx{G}(x) = \mathfrak{g}(x)^{-1}$.

The Riemannian measure $\mu_\mathfrak{g}$ on the manifold $(M,\mathfrak{g})$ has density $\diff \mu_\mathfrak{g} \propto w_\mathfrak{g}(x(z)) \idiff{z}$ with respect to the Lebesgue measure in local coordinates. The weight $w_\mathfrak{g} := \det(\mathfrak{g})^{-1/2}$.  Whenever this measure is finite, we normalize it to obtain a probability.
In particular, a compact Riemannian manifold always admits a Riemannian probability measure.

The matrix Laplace--Beltrami operator $\Delta_{\mathfrak{g}}$ on the manifold is defined as
\[\Delta_\mathfrak{g}\mtx{f}(x) := \frac{1}{w_\mathfrak{g}} \sum_{i,j=1}^n\partial_i\left(w_\mathfrak{g}g^{ij}\partial_j\mtx{f}(x)\right)\quad \text{for suitable $\mtx{f} : M \to \mathbb{H}_d$ and $x\in M$.}\] 
Here, $\partial_i$ and the like represent the components of the differential with respect to local coordinates.
The diffusion process on $M$ whose infinitesimal generator is $\Delta_{\mathfrak{g}}$
is called the Riemannian Brownian motion.  The measure $\mu_\mathfrak{g}$ is the
stationary measure for the Brownian motion.

To generalize, one may consider a weighted measure $\diff \mu \propto \econst^{-W} \diff \mu_\mathfrak{g}$ where the potential $W:M\rightarrow \mathbb{R}$ is sufficiently smooth. The associated infinitesimal generator is then the Laplace--Beltrami operator plus a drift term: 
\begin{equation} \label{eqn:mL-drift}
\mL\mtx{f}(x) := -\sum_{i,j=1}^n g^{ij}\,\partial_iW\, \partial_j\mtx{f} + \frac{1}{w_\mathfrak{g}} \sum_{i,j=1}^n\partial_i\left(w_\mathfrak{g}g^{ij}\,\partial_j\mtx{f}(x)\right)\quad \text{for suitable $\mtx{f} : M \to \mathbb{H}_d$.}
\end{equation}
It is not hard to check that $\mL$ is symmetric with respect to $\mu$, and hence the induced diffusion process with drift is reversible. 

\subsection{Carr{\'e} du champ operators}

Next, we present expressions for the matrix carr{\'e} du champ operators associated with the infinitesimal generator $\mL$ defined in~\eqref{eqn:mL-drift}.  The derivation follows from a standard symbol calculation, as in the scalar setting. 

\subsubsection{Carr\'{e} du champ operator} The carr{\'e} du champ operator coincides with the squared ``magnitude'' of the differential:
\begin{equation}\label{eqn:gamma_Riemannian}
\Gamma(\mtx{f}) = \sum_{i,j=1}^ng^{ij}\,\partial_i\mtx{f}\, \partial_j\mtx{f}\quad \text{for suitable $\mtx{f}:M\rightarrow\mathbb{H}_d$}.
\end{equation}
Note that this expression contains a matrix product. Calculation of the carr{\'e} du champ involves a choice of local coordinates. Nevertheless, expressions of the carr{\'e} du champ in different choices of local coordinates are equivalent under change of variables. 

Another way to calculate the carr{\'e} du champ $\Gamma(\mtx{f})$ is by relating it to the tangential gradient of $\mtx{f}$ on the manifold. For a point $x\in M$, let $T_xM$ denote the tangent space at $x$. The tangential gradient $\nabla_M\mtx{f}(x)$ of a matrix-valued function $\mtx{f}:M\rightarrow\mathbb{H}_d$ can be written as
\[\nabla_M\mtx{f}(x) = \sum_{i=1}^N \vct{v}_i\otimes \mtx{A}_i\]
for some vectors $\{\vct{v}_i\}_{i=1}^N\subset T_xM$ and some matrices $\{\mtx{A}_i\}_{i=1}^N\subset \mathbb{H}_d$ that depend on the representation of the manifold $M$. The integer $N$ is not necessarily the dimension of $M$.  When $d=1$, the tangential gradient $\nabla_M\mtx{f}(x)$ is also a vector in $T_xM$. Now, the carr{\'e} du champ at the point $x$ is given by an equivalent expression:
\begin{equation}\label{eqn:gamma_Riemannian_alternative}
\Gamma(\mtx{f})(x) = \langle \nabla_M\mtx{f}(x),\nabla_M\mtx{f}(x)\rangle_{\mtx{G}} := \sum_{i,j=1}^N\langle \vct{v}_i,\vct{v}_j\rangle_{\mtx{G}}\cdot \mtx{A}_i\mtx{A}_j
\end{equation}
where $\langle \cdot ,\cdot\rangle_{\mtx{G}}$ is the inner product on $T_xM$ associated with the metric tensor $\mtx{G}$. 

The expression \eqref{eqn:gamma_Riemannian_alternative} coincides with \eqref{eqn:gamma_Riemannian} if we choose
$(\vct{v}_i : 1 \leq i \leq n)$ to be the moving frame of $N = n$ local coordinates.  In this case, 
$\langle \vct{v}_i(x),\vct{v}_i(x)\rangle_{\mtx{G}} = g_{ij}(x)$ for $i,j=1,\dots,n$.  Moreover, the tangential gradient can be written as
\[\nabla_M\mtx{f}(x) = \sum_{i=1}^n \vct{v}_i(x)\otimes \nabla_M^i\mtx{f}(x),\]
where $\nabla_M^i\mtx{f}(x):= \sum_{j=1}^ng^{ij}\partial_j\mtx{f}$ for $i=1,\dots,n$. Then one can rewrite the expression \eqref{eqn:gamma_Riemannian_alternative} in the form \eqref{eqn:gamma_Riemannian} by recalling that $\mtx{G}=\mathfrak{g}^{-1}$.

The expression \eqref{eqn:gamma_Riemannian_alternative} is especially useful when the Riemannian manifold $M$ is embedded into a higher-dimensional Euclidean space $\mathbb{R}^N$ with the metric tensor $\mtx{G}$ induced by the Euclidean metric. That is, $M$ is a Riemannian submanifold of $\mathbb{R}^N$. In this case, for a function $\mtx{f} : \R^{N} \to \mathbb{H}_d$, the tangential gradient  $\nabla_M\mtx{f}(x)$ is simply the projection of $\nabla_{\mathbb{R}^N}\mtx{f}(x)$ onto the tangent space $T_xM$, where $\nabla_{\mathbb{R}^N}\mtx{f}$ is the ordinary gradient of $\mtx{f}$ in the embedding space $\R^N$.  Let us elaborate.  Suppose that $x = (x_1,\dots,x_N)$ is the representation of a point $x\in M$ with respect to the standard basis $\{\mathbf{e}_i\}_{i=1}^N$ of $\mathbb{R}^N$. Define the orthogonal projection $\mathrm{Proj}_x$ onto the tangent space $T_xM$. Then the tangential gradient satisfies 
\[\nabla_M\mtx{f}(x) = (\mathrm{Proj}_x \otimes \Id)\left(\sum_{i=1}^N \mathbf{e}_i\otimes \frac{\partial \mtx{f}(x)}{\partial x_i}\right) = \sum_{i=1}^N (\mathrm{Proj}_x\mathbf{e}_i)\otimes \frac{\partial \mtx{f}(x)}{\partial x_i}.\]
This expression of the tangential gradient helps simplify the calculation of the carr{\'e} du champ operator in many interesting examples.

\subsubsection{Iterated carr{\'e} du champ operator} To introduce the iterated matrix carr{\'e} du champ operator, we first define the Hessian
$\nabla^2 \mtx{f} := (\nabla^2_{ij} \mtx{f} : 1 \leq i,j \leq n)$ of a matrix-valued function $\mtx{f}:M\rightarrow \mathbb{H}_d$,
where
\[\nabla^2_{ij} \mtx{f} := \partial_{ij}\mtx{f} - \sum_{k=1}^n\gamma_{ij}^k\partial_k\mtx{f} \quad \text{for $i,j=1,\dots,n$}.\]
The Christoffel symbols $\gamma_{ij}^k$ are the quantities
\[
\gamma_{ij}^k := \frac{1}{2}\sum_{l=1}^ng^{kl}(\partial_{j}g_{il} + \partial_{i}g_{jl} - \partial_{l}g_{ij})
	\quad \text{for $i,j,k=1,2,\dots,n$}.\]
When the matrix dimension $d > 1$, the Hessian $\nabla^2 \mtx{f}$ is a 4-tensor.

Now, the iterated matrix carr{\'e} du champ operator $\Gamma_2$ admits the formula
\begin{equation} \label{eqn:gamma2-riemann}
\Gamma_2(\mtx{f}) = \sum_{i,j,k,l=1}^ng^{ij}g^{kl} \, \nabla^2_{ik}\mtx{f}\, \nabla^2_{jl}\mtx{f} + \sum_{i,j,k,l=1}^ng^{ik}g^{jl}\left(\operatorname{Ric}_{kl} + \nabla^2_{kl}W\right) \partial_i\mtx{f}\, \partial_j\mtx{f}.
\end{equation}
Again, this expression involves matrix products.
The Ricci tensor $\operatorname{\mtx{Ric}} = (\operatorname{Ric}_{ij} : 1 \leq i,j \leq n)$ is given by 
\[\operatorname{Ric}_{ij} := \sum_{k=1}^n \left(\partial_k\gamma_{ij}^k - \partial_i\gamma_{kj}^k\right) + \sum_{k,l=1}^n\left(\gamma_{kl}^k\gamma_{ij}^l - \gamma_{il}^k\gamma_{jk}^l\right).\]
The Ricci tensor expresses the curvature of the manifold.

\subsection{Bakry--\'Emery criterion}\label{sec:BE_Riemannian}
Since the first sum in the expression~\eqref{eqn:gamma2-riemann} for $\Gamma_2(\mtx{f})$ is a positive-semidefinite matrix, we have the inequality 
\begin{equation}\label{eqn:gamma_2_Riemannian}
\Gamma_2(\mtx{f}) \succcurlyeq \sum_{i,j,k,l=1}^ng^{ik}g^{jl}\left(\operatorname{Ric}_{kl} + \nabla^2_{kl}W\right) \partial_i\mtx{f}\, \partial_j\mtx{f}.
\end{equation}
In a Euclidean space, the Ricci tensor is everywhere zero, so the Bakry--\'Emery criterion~\eqref{Bakry-Emery} relies on the strong convexity of the potential $W$, as we have seen in Section~\ref{sec:log-concave}.  In contrast, on a Riemannian manifold, the Ricci tensor plays an important role.

Let us now assume that the Riemannian manifold is unweighted; that is, the potential $W = 0$ identically.  By comparing the displays \eqref{eqn:gamma_Riemannian} and \eqref{eqn:gamma_2_Riemannian} for a scalar function $f:M\to\mathbb{R}$, we can see that the scalar Bakry--\'Emery criterion holds with constant $c=\rho^{-1}$, provided that
\[
\mathfrak{g}(x)\operatorname{\mtx{Ric}}(x)\mathfrak{g}(x)\succcurlyeq \rho \mathfrak{g}(x)\quad \text{or equivalently}\quad \operatorname{\mtx{Ric}}(x) \succcurlyeq \rho \mtx{G}(x) \quad \text{for all $x\in M$}.
\]
That is, the eigenvalues of $\operatorname{\mtx{Ric}}$ relative to the metric $\mtx{G}$ are bounded from below by $\rho$. This is often referred as the curvature condition $CD(\rho,\infty)$.
Proposition~\ref{prop:BE_equiv} allows us to lift the scalar Bakry--{\'E}mery
criterion to matrix-valued functions; we can also achieve this goal by direct
argument.

We remark that the uniform positiveness of the Ricci curvature tensor also leads to a Poincar\'e inequality for the diffusion process on the manifold; see \cite[Section 4.8]{bakry2013analysis}. Therefore, proposition~\ref{prop:matrix_poincare} implies that the associated Markov semigroup is ergodic in the sense of \eqref{eqn:ergodicity}. 

As a typical example, consider the $n$-dimensional unit sphere $\mathbb{S}^n \subset \R^{n+1}$,
equipped with the induced Riemmanian structure.  The associated Riemannian measure is the
uniform distribution.  For the sphere, the Ricci curvature tensor is constant:
$\operatorname{\mtx{Ric}} = (n-1)\mtx{G}$; see \cite[Section 2.2]{bakry2013analysis}.
Therefore, the Brownian motion on $\mathbb{S}^n$ satisfies a Bakry--\'Emery criterion \eqref{Bakry-Emery} with $c = (n-1)^{-1}$ for $n\geq 2$.

Next, consider the special orthogonal group $\mathrm{SO}(n) \subset \R^{n \times n}$
with the induced Riemannian structure.  The canonical measure is the Haar probability measure.
For this manifold, it is known that the eigenvalues of the Ricci tensor are bounded below by $\rho = (n-1)/4$;
see~\cite[p.~27]{ledoux2001concentration}.
Therefore, the special orthogonal group $\mathrm{SO}(n)$ satisfies the Bakry--{\'E}mery
criterion~\eqref{Bakry-Emery} with $c = 4/(n-1)$.

There are many other Riemannian manifolds where a lower bound on the Ricci curvature is available.
We refer the reader to~\cite[Sec.~2.2.1]{ledoux2001concentration} for more examples and references.

\subsection{Calculations of carr{\'e} du champ operators}\label{sec:Riemannian_gamma}
In this section, we provide calculations of carr{\'e} du champ operators for the concrete examples in Section~\ref{sec:riemann-exp}. 

\subsubsection{Example~\ref{example:sphere_I}: Sphere I}

In this example, we consider the unit sphere $\mathbb{S}^n \subset \R^{n+1}$ as a Riemannian submanifold
of $\R^{n+1}$ for $n \geq 2$.  The canonical Riemannian measure is the uniform probability measure $\sigma_n$
on the sphere.

Let $(\mtx{A}_1, \dots, \mtx{A}_{n+1}) \subset \mathbb{H}_d$ be a fixed collection of Hermitian matrices.
Draw a random vector $\vct{x} = (x_1, \dots, x_{n+1}) \in \mathbb{S}^n$ from the uniform measure; we use
boldface to emphasize that $\vct{x}$ is a vector in the embedding space.  Consider the matrix-valued function
$$
\mtx{f}(\vct{x}) = \sum_{i=1}^{n+1} x_i \mtx{A}_i.
$$
We can use the expression~\eqref{eqn:gamma_Riemannian_alternative} to compute the carr{\'e} du champ
of $\mtx{f}$.

Indeed, the ordinary gradient of $\mtx{f}$ as a function on $\mathbb{R}^{n+1}$ is given by 
\[\nabla_{\mathbb{R}^{n+1}} \mtx{f}(\vct{x}) = \sum_{i=1}^{n+1}\mathbf{e}_i\otimes \frac{\partial \mtx{f}(\vct{x})}{\partial x_i} = \sum_{i=1}^{n+1}\mathbf{e}_i\otimes \mtx{A}_i\quad \text{for all $\vct{x}\in \mathbb{R}^{n+1}$}.\]
As usual, $\{\mathbf{e}_i\}_{i=1}^{n+1}$ is the standard basis of $\mathbb{R}^{n+1}$.
Define the orthogonal projection $\mathrm{Proj}_{\vct{x}} = \Id - \vct{x}\vct{x}^\trsp$ onto the tangent space $T_{\vct{x}}\mathbb{S}^n = \{\vct{y}\in \mathbb{R}^{n+1}: \vct{y}^\trsp\vct{x}=0 \}$. 
Thus, the tangential gradient is the projection of the ordinary gradient onto the tangent space:
\[\nabla_{\mathbb{S}^n}\mtx{f}(\vct{x}) = (\mathrm{Proj}_{\vct{x}} \otimes \Id)\nabla_{\mathbb{R}^{n+1}} \mtx{f}(\vct{x}) = \sum_{i=1}^{n+1}(\mathbf{e}_i - x_i\vct{x})\otimes \mtx{A}_i.\]
By the expression \eqref{eqn:gamma_Riemannian_alternative}, we can compute the carr{\'e} du champ at each point $\vct{x}\in \mathbb{S}^{n}$ as
\begin{align*}
\Gamma(\mtx{f})(\vct{x}) &= \sum_{i,j=1}^{n+1}(\mathbf{e}_i - x_i\vct{x})^\trsp(\mathbf{e}_j - x_j\vct{x})\cdot \mtx{A}_i\mtx{A}_j = \sum_{i,j=1}^{n+1}(\delta_{ij} - x_ix_j)\,\mtx{A}_i\mtx{A}_j \\
&= \sum_{i=1}^{n+1}\mtx{A}_i^2 - \sum_{i,j=1}^{n+1}x_ix_j\,\mtx{A}_i\mtx{A}_j = \sum_{i=1}^{n+1}\mtx{A}_i^2 - \left(\sum_{i=1}^{n+1}x_i\mtx{A}_i\right)^2.
\end{align*}
This calculation verifies the formula \eqref{eqn:gamma_sphere_I}.
It is now evident that
$$
\mtx{0} \psdle \Gamma(\mtx{f})(\vct{x}) \psdle \sum_{i=1}^{n+1}\mtx{A}_i^2 \quad\text{for all $\vct{x} \in \mathbb{S}^n$.}
$$
Therefore, the variance proxy $v_{\mtx{f}} \leq \norm{ \sum_{i=1}^{n+1} \mtx{A}_i^2 }$.

\subsubsection{Example~\ref{example:sphere_II}: Sphere II}

We maintain the setup and notation from the last subsection, and we consider the matrix-valued function
$$
\mtx{f}(\vct{x}) = \sum_{i=1}^{n+1}x_i^2\mtx{A}_i
\quad\text{where $\vct{x} \sim \sigma_n$ on $\mathbb{S}^n$.}
$$
Treating $\mtx{f}$ as a function on the embedding space $\mathbb{R}^{n+1}$,
the ordinary gradient is given by 
\[\nabla_{\mathbb{R}^{n+1}} \mtx{f}(\vct{x}) = \sum_{i=1}^{n+1}\mathbf{e}_i\otimes \frac{\partial \mtx{f}(\vct{x})}{\partial x_i} = 2\sum_{i=1}^{n+1}x_i\mathbf{e}_i\otimes \mtx{A}_i\quad \text{for all $\vct{x}\in \mathbb{R}^{n+1}$}.\]
Thus, the tangential gradient of $\mtx{f}$ at a point $\vct{x}\in \mathbb{S}^{n}$ can be computed as  
\[\nabla_{\mathbb{S}^n}\mtx{f}(\vct{x}) = (\mathrm{Proj}_{\vct{x}} \otimes \Id)\nabla_{\mathbb{R}^{n+1}} \mtx{f}(\vct{x}) = 2\sum_{i=1}^{n+1}(x_i\mathbf{e}_i - x_i^2\vct{x})\otimes \mtx{A}_i.\]
By the expression \eqref{eqn:gamma_Riemannian_alternative} of the carr{\'e} du champ operator, we can compute that
\begin{align*}
\Gamma(\mtx{f})(\vct{x}) &= 4\sum_{i,j=1}^{n+1}(x_i\mathbf{e}_i - x_i^2\vct{x})^\trsp(x_j\mathbf{e}_j - x_j^2\vct{x})\cdot \mtx{A}_i\mtx{A}_j = 4\sum_{i=1}^{n+1}x_i^2\mtx{A}_i^2 - 4\sum_{i,j=1}^{n+1}x_i^2x_j^2\,\mtx{A}_i\mtx{A}_j\\
&= 4\sum_{i,j=1}^{n+1}x_i^2x_j^2\mtx{A}_i^2 - 4\sum_{i,j=1}^{n+1}x_i^2x_j^2\,\mtx{A}_i\mtx{A}_j = 2\sum_{i,j=1}^{n+1}x_i^2x_j^2(\mtx{A}_i - \mtx{A}_j)^2.
\end{align*}
This establishes the formula \eqref{eqn:gamma_sphere_II}.

Using this result, we can obtain some bounds for the variance proxy.
First, introduce the maximum norm difference $a:= \max_{i,j}\norm{\smash{\mtx{A}_i-\mtx{A}_j}}$.
Then the carr{\'e} du champ satisfies
\[\Gamma(\mtx{f})(\vct{x}) \preccurlyeq 2\sum_{i,j=1}^{n+1}x_i^2x_j^2\norm{\smash{\mtx{A}_i-\mtx{A}_j}}^2\cdot \Id_d \preccurlyeq 2a^2\sum_{i,j=1}^{n+1}x_i^2x_j^2\cdot \Id_d  = 2a^2\Id_d.\]
Thus, $v_{\mtx{f}} \leq 2 a^2$.
Here is an alternative approach.  For an arbitrary matrix $\mtx{B} \in \mathbb{H}_d$,
we can write
\begin{align*}
\Gamma(\mtx{f})(\vct{x}) &= 2\sum_{i,j=1}^{n+1}x_i^2x_j^2(\mtx{A}_i - \mtx{B} + \mtx{B} - \mtx{A}_j)^2\\
&= 4\sum_{i=1}^{n+1}x_i^2(\mtx{A}_i - \mtx{B})^2 - 4\left(\sum_{i=1}^{n+1}x_i^2\mtx{A}_i - \mtx{B}\right)^2\\
&\preccurlyeq 4\sum_{i=1}^{n+1}x_i^2(\mtx{A}_i - \mtx{B})^2
\end{align*}
Defining $b:=\min_{\mtx{B}\in \mathbb{H}_d} \max_i \norm{\mtx{A}_i-\mtx{B}}$,
we see that the variance proxy $v_{\mtx{f}} \leq 4 b^2$.
Modulo an extra factor of two, the second bound represents
a qualitative improvement over the first.

\subsubsection{Example~\ref{example:SO_d}: Special orthogonal group}

Let $(\mtx{A}_1, \dots, \mtx{A}_n) \subset \mathbb{H}_d(\R)$ be fixed, real, symmetric matrices.
Draw $(\mtx{O}_1, \dots, \mtx{O}_n) \subset \mathrm{SO}(d)$ independent and uniformly from
the Haar measure on the special orthogonal group $\mathrm{SO}(d)$.  Consider the random matrix
$$
\mtx{f}(\mtx{O}_1, \dots, \mtx{O}_n) = \sum_{i=1}^n \mtx{O}_i \mtx{A}_i \mtx{O}_i^\trsp.
$$
To study this random matrix model, we will use local geodesic/normal coordinates
on the product manifold $\mathrm{SO}(d)^{\otimes n}$ to compute the carr{\'e} du champ;
for example, see~\cite[Sec. 5]{lee2018introduction} \& \cite[Sec. 3]{hall2015lie}.
Since $\mathrm{SO}(d)^{\otimes n}$ is a Lie group, we only need to consider the
geodesic frame of the tangent space at the identity element $(\Id_d, \dots, \Id_d)$.

For each $1\leq k<l\leq d$, let $\mtx{S}_{kl}\in \mathbb{M}_d$ be the unit skew-symmetric matrices: 
\[(\mtx{S}_{kl})_{kl} = 1/\sqrt{2}\quad\text{and}\quad (\mtx{S}_{kl})_{lk} = -1/\sqrt{2}\quad \text{and other entries of $\mtx{S}_{kl}$ are zero}.\]
Define the tangent vectors 
\[\mtx{V}_{kl}^i = \underbrace{(\mtx{0},\dots,\mtx{S}_{kl},\dots,\mtx{0})}_{\text{The $i$th coordinate is $\mtx{S}_{kl}$}} \quad \text{for $i=1,\dots,n$ and $1\leq k<l\leq d$}.\]
Then $( \mtx{V}_{kl}^i : 1\leq i\leq n \text{ and } 1\leq k<l\leq d )$ forms an orthonormal basis for the tangent space at the identity element of the Lie group $\mathrm{SO}(d)^{\otimes n}$, with respect to the Hilbert--Schmidt inner product:
\[\langle (\mtx{P}_1,\dots,\mtx{P}_n), (\mtx{Q}_1,\dots,\mtx{Q}_n)\rangle_\mathrm{HS} = \sum_{i=1}^n \trace[\mtx{P}_i^*\mtx{Q}_i]\quad \text{for $\mtx{P}_1,\dots,\mtx{P}_n,\mtx{Q}_1,\dots,\mtx{Q}_n\in\mathbb{M}_d$}.\]
This basis $\{\mtx{V}_{kl}^i\}_{1\leq i\leq n,1\leq k<l\leq d}$ can be translated to an orthonormal basis of the tangent space at another point $(\mtx{O}_1, \dots, \mtx{O}_n)$ by the group operation: $(\mtx{0},\dots,\mtx{S}_{kl},\dots,\mtx{0})\mapsto (\mtx{0},\dots,\mtx{S}_{kl}\mtx{O}_i,\dots,\mtx{0})$.

Now, for each $(\mtx{O}_1, \dots, \mtx{O}_n)\in\mathrm{SO}(d)^{\otimes n}$, consider the local geodesic map corresponding to the direction $\mtx{V}_{kl}^i$:
\[(\mtx{O}_1, \dots, \mtx{O}_i, \dots, \mtx{O}_n) \mapsto (\mtx{O}_1, \dots, \econst^{\varepsilon\mtx{S}_{kl}}\mtx{O}_i, \dots, \mtx{O}_n)\quad \text{for some small $\varepsilon\geq0$}.\] 
Then the directional derivative of $\mtx{f}$ in local geodesic coordinates, evaluated at the point $(\mtx{O}_1,\dots,\mtx{O}_n)$ where $\eps = 0$, is given by 
\[\frac{\partial \mtx{f}}{ \partial \mtx{V}_{kl}^i}(\mtx{O}_1, \dots, \mtx{O}_n) = \mtx{S}_{kl}\mtx{O}_i\mtx{A}_i\mtx{O}_i^\trsp - \mtx{O}_i\mtx{A}_i\mtx{O}_i^\trsp \mtx{S}_{kl} =: \mtx{S}_{kl}\mtx{B}_i - \mtx{B}_i\mtx{S}_{kl},\]
where $\mtx{B}_i := \mtx{O}_i\mtx{A}_i\mtx{O}_i^\trsp$.  In local geodesic coordinates, the co-metric tensor $\mathfrak{g}$ at the origin equals the identity.  Using the formula \eqref{eqn:gamma_Riemannian}, we can compute the carr{\'e} du champ as   
\begin{align*}
\Gamma(\mtx{f})(\mtx{O}_1, \dots, \mtx{O}_n) &=\sum_{i=1}^n\sum_{1\leq k<l\leq b} \left(\frac{\partial \mtx{f}}{ \partial \mtx{V}_{kl}^i}\right)^2 = \sum_{i=1}^n\sum_{1\leq k<l\leq b}\left(\mtx{S}_{kl}\mtx{B}_i - \mtx{B}_i\mtx{S}_{kl}\right)^2\\
&= \sum_{i=1}^n\sum_{1\leq k<l\leq b}\left(-\mtx{S}_{kl}\mtx{B}_i^2\mtx{S}_{kl} - \mtx{B}_i\mtx{S}_{kl}^2\mtx{B}_i + \mtx{S}_{kl}\mtx{B}_i\mtx{S}_{kl}\mtx{B}_i + \mtx{B}_i\mtx{S}_{kl}\mtx{B}_i\mtx{S}_{kl}\right).
\end{align*}
It is not hard to check that, for any real matrix $\mtx{M}\in \mathbb{M}_d(\mathbb{R})$, 
\[\sum_{1\leq k<l\leq b} \mtx{S}_{kl}\mtx{M}\mtx{S}_{kl} =  -\frac{1}{2}(\trace[\mtx{M}] \cdot \Id_d - \mtx{M}^\trsp).\]
Therefore, we can obtain that 
\begin{align*}
\Gamma(\mtx{f})(\mtx{O}_1, \dots, \mtx{O}_n) &= \frac{1}{2}\sum_{i=1}^n\left( \trace[\mtx{B}_i^2] \cdot \Id_d - \mtx{B}_i^2 + (d-1)\mtx{B}_i^2 - (\trace[\mtx{B}_i]\cdot \Id_d - \mtx{B}_i)\mtx{B}_i - \mtx{B}_i(\trace[\mtx{B}_i]\cdot \Id_d - \mtx{B}_i) \right) \\
&= \frac{1}{2}\sum_{i=1}^n\left( \trace[\mtx{B}_i^2]\cdot \Id_d + d\cdot \mtx{B}_i^2 - 2\trace[\mtx{B}_i]\cdot \mtx{B}_i\right)\\
&= \frac{1}{2}\sum_{i=1}^n\mtx{O}_i\left( \trace[\mtx{A}_i^2]\cdot \Id_d + d\cdot \mtx{A}_i^2 - 2\trace[\mtx{A}_i]\cdot \mtx{A}_i\right)\mtx{O}_i^\trsp.\\
&= \frac{1}{2}\sum_{i=1}^n\mtx{O}_i\left\{ \left(\trace[\mtx{A}_i^2]-\frac{\trace[\mtx{A}_i]^2}{d}\right)\cdot \Id_d + d\cdot \left(\mtx{A}_i - \frac{\trace[\mtx{A}_i]}{d}\cdot \Id_d \right)^2\right\}\mtx{O}_i^\trsp.
\end{align*}
This justifies the formula \eqref{eqn:gamma_SO_d}. Since each $\mtx{O}_i$ is an orthogonal matrix,
the variance proxy satisfies
\begin{align*}
v_{\mtx{f}} &= 
\max\nolimits_{\mtx{O}_i} \norm{\Gamma(\mtx{f})(\mtx{O}_1, \dots, \mtx{O}_n)} \\
&\leq \frac{1}{2}\sum_{i=1}^n\norm{ \left(\trace[\mtx{A}_i^2]-d^{-1}\trace[\mtx{A}_i]^2\right)\cdot \Id_d + d\cdot \left(\mtx{A}_i - d^{-1}\trace[\mtx{A}_i]\cdot \Id_d \right)^2 }\\
&= \frac{1}{2}\sum_{i=1}^n \left( \trace[\mtx{A}_i^2]-d^{-1}\trace[\mtx{A}_i]^2 + d\cdot \norm{\mtx{A}_i - d^{-1}\trace[\mtx{A}_i]\cdot \Id_d  }^2 \right).
\end{align*}
Note that this bound is sharp because we can always choose some particular point $(\mtx{O}_1, \dots, \mtx{O}_n)$ to achieve equality. 
\subsection{Matrix concentration results}\label{sec:concentration_results_Riemannian}
At last, we provide a proof of Theorem~\ref{thm:riemann-simple} from Theorem~\ref{thm:polynomial_moment} and Theorem~\ref{thm:exponential_concentration}.

Consider a compact $n$-dimensional Riemannian submanifold $M$ of a Euclidean space. The uniform measure $\mu$ on $M$ is the stationary measure of the associated Brownian motion on $M$. As discussed in Section~\ref{sec:BE_Riemannian}, the Brownian motion satisfies a Bakry--\'Emery criterion with constant $c=\rho^{-1}$ if the eigenvalues of the Ricci curvature tensor are bounded below by $\rho$. We then apply Theorem~\ref{thm:polynomial_moment} and Theorem~\ref{thm:exponential_concentration} with $c=\rho^{-1}$ to obtain the matrix concentration inequalities in Theorem~\ref{thm:riemann-simple}. 

For any point $x\in M$, we can compute the carr\'e du champ $\Gamma(\mtx{f})(x)$ in local normal coordinates centered at $x$. In this case, the co-metric tensor $\mathfrak{g}$ is the identity matrix $\Id_n$ when evaluated at $x$. The expression of the variance proxy $v_{\mtx{f}}$ in Theorem~\ref{thm:riemann-simple} then follows from formula \eqref{eqn:gamma_Riemannian} of the carr\'e du champ operator.

\appendix

\section{Matrix moments and concentration}
\label{apdx:matrix_moments}

For reference, this appendix summarizes a few standard results on matrix moments and concentration. Proposition~\ref{prop:matrix_Chebyshev} explains how to transfer the polynomial moments bounds in Theorem~\ref{thm:polynomial_moment} into matrix concentration inequalities. Proposition~\ref{prop:trace_mgf} states some properties of the trace mgf that are used in the proof of Theorem~\ref{thm:exponential_moment}. Proposition~\ref{prop:matrix_exponential_concentration} allows us to derive the exponential concentration inequalities in Theorem~\ref{thm:exponential_concentration} from the exponential moment bounds in Theorem~\ref{thm:exponential_moment}.

\subsection{The matrix Chebyshev inequality}
We can obtain concentration inequalities
for a random matrix given bounds for the polynomial trace moments.
This result extends Chebyshev's probability inequality.  For instance, see \cite[Proposition 6.2]{mackey2014}.

\begin{proposition}[Matrix Chebyshev inequality]\label{prop:matrix_Chebyshev}
Let $\mtx{X}\in\mathbb{H}_d$ be a random matrix. For all $t\geq 0$, 
\[\Prob{\|\mtx{X}\|\geq t}\leq \inf_{q\geq 1}t^{-q}\cdot \E\trace|\mtx{X}|^q.\]
Furthermore, 
\[\E\|\mtx{X}\|\leq  \inf_{q\geq 1}\left(\E\trace|\mtx{X}|^q\right)^{1/q}.\]
\end{proposition}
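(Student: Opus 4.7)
The plan is to reduce both bounds to scalar Markov/Chebyshev inequalities by exploiting the fact that the operator norm of a Hermitian matrix is dominated by its Schatten $q$-norm.

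First I would record the elementary pointwise inequality
\[
\norm{\mtx{X}}^q = \max_i \abs{\lambda_i(\mtx{X})}^q \leq \sum_{i=1}^d \abs{\lambda_i(\mtx{X})}^q = \trace \abs{\mtx{X}}^q
\quad\text{for every $q \geq 1$,}
\]
which follows because $\abs{\mtx{X}}^q = \sum_i \abs{\lambda_i(\mtx{X})}^q \vct{u}_i \vct{u}_i^*$ is the standard matrix function applied to the eigenvalue decomposition of $\mtx{X}$. For the tail bound, the event $\{\norm{\mtx{X}} \geq t\}$ equals $\{\norm{\mtx{X}}^q \geq t^q\}$, and the pointwise comparison lets me upgrade this to $\{\trace \abs{\mtx{X}}^q \geq t^q\}$; a single application of the scalar Markov inequality to the nonnegative random variable $\trace \abs{\mtx{X}}^q$ then yields $\Prob{\norm{\mtx{X}} \geq t} \leq t^{-q} \E \trace \abs{\mtx{X}}^q$. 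Taking the infimum over $q \geq 1$ delivers the first claim.

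For the expectation bound, I would take $q$-th roots in the pointwise inequality to obtain $\norm{\mtx{X}} \leq (\trace \abs{\mtx{X}}^q)^{1/q}$, take expectations, and invoke Jensen's inequality for the concave map $x \mapsto x^{1/q}$ on $[0,\infty)$ (valid since $q \geq 1$) to conclude
\[
\E \norm{\mtx{X}} \leq \E (\trace \abs{\mtx{X}}^q)^{1/q} \leq (\E \trace \abs{\mtx{X}}^q)^{1/q}.
\]
Again, taking the infimum over $q \geq 1$ gives the stated bound.

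There is no real obstacle here: the entire argument is a packaging of scalar Markov with the inequality $\norm{\mtx{X}}^q \leq \trace \abs{\mtx{X}}^q$. The only point worth a line of justification is that $\trace \abs{\mtx{X}}^q$ really is the $q$-th power of the Schatten $q$-norm, so the comparison with the operator norm is immediate from the identity $\norm{\mtx{X}} = \max_i \abs{\lambda_i(\mtx{X})}$ for Hermitian $\mtx{X}$.
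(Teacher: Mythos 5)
Your proof is correct, and it is exactly the standard argument behind the result the paper cites (Proposition 6.2 of Mackey et al.): bound $\norm{\mtx{X}}^q \leq \trace\abs{\mtx{X}}^q$ pointwise, apply scalar Markov to $\trace\abs{\mtx{X}}^q$, and use Jensen with the concave map $x \mapsto x^{1/q}$ for the expectation bound. Nothing further is needed.
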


As mentioned in Section~\ref{sec:main_results}, Proposition~\ref{prop:matrix_Chebyshev} can be applied to the polynomial moment bounds in Theorem~\ref{thm:polynomial_moment} to yield subgaussian concentration inequalities. 

\subsection{The matrix Laplace transform method} We can also obtain exponential concentration inequalities via the matrix Laplace transform. Let $\mtx{X}\in\mathbb{H}_d$ be a random matrix. The normalized trace moment generating function (mgf) of $\mtx{X}$ is defined as 
\[m(\theta):= \E\ntr \econst^{\theta\mtx{X}},\quad \text{for}\ \theta\in \mathbb{R}.\]
This definition is due to Ahlswede and Winter \cite{ahlswede2002strong}. In the proof of Theorem~\ref{thm:exponential_moment}, we have used some properties of the trace mgf given in the following proposition, which restates \cite[Lemma 12.3]{paulin2016efron}. 

\begin{proposition}[Properties of the trace mgf]\label{prop:trace_mgf} Assume that $\mtx{X}\in\mathbb{H}_d$ is a zero-mean random matrix that is bounded in norm. Define the normalized trace mgf
$m(\theta) := \E \ntr \econst^{\theta\mtx{X}}$ for $\theta\in\mathbb{R}$. Then  
\begin{equation}\label{eqn:m.g.f_Property_1}
\log m(\theta) \geq 0 \quad \text{and} \quad \log m(0) = 0.
\end{equation}
The derivative of the trace mgf satisfies
\begin{equation*}\label{eqn:m.g.f_Property_2}
m'(\theta) = \E \ntr\left[\mtx{X}\,\econst^{\theta\mtx{X}}\right] \quad \text{and} \quad m'(0) = 0.
\end{equation*}
The trace mgf is a convex function; in particular 
\begin{equation*}\label{eqn:m.g.f_Property_3}
m'(\theta) \leq 0\quad \text{for}\quad \theta\leq 0 \quad \text{and} \quad m'(\theta) \geq 0\quad \text{for}\quad \theta\geq 0.
\end{equation*}
\end{proposition}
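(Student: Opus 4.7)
My plan is to prove the three claims in order by elementary manipulations and by differentiating $m(\theta)$ under the expectation and trace, justified by the assumption that $\mtx{X}$ is bounded in norm.

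For the first claim, the identity $\log m(0) = 0$ is immediate from $m(0) = \Expect\ntr \Id_d = 1$. To show $\log m(\theta) \geq 0$ for all $\theta \in \R$, I would chain two instances of Jensen's inequality. First, for a fixed Hermitian matrix $\mtx{A}$ with eigenvalues $\lambda_i$, convexity of the scalar exponential yields
\[
\ntr \econst^{\mtx{A}} = \frac{1}{d}\sum_{i=1}^d \econst^{\lambda_i} \geq \exp\!\left(\frac{1}{d}\sum_{i=1}^d \lambda_i\right) = \econst^{\ntr \mtx{A}}.
\]
Applying this pointwise, taking expectations, and using scalar Jensen once more for $\econst^{(\cdot)}$ gives $m(\theta) \geq \Expect \econst^{\theta \ntr \mtx{X}} \geq \econst^{\theta\, \Expect \ntr \mtx{X}} = 1$, because $\Expect \mtx{X} = \mtx{0}$.

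For the second claim, the operator exponential $\econst^{\theta \mtx{X}}$ is entire in $\theta$ as a norm-convergent power series in the bounded Hermitian operator $\mtx{X}$, with derivative $\mtx{X}\, \econst^{\theta \mtx{X}} = \econst^{\theta \mtx{X}}\, \mtx{X}$ since $\mtx{X}$ commutes with every function of itself. The norm bound on $\mtx{X}$ dominates the power series on every compact interval in $\theta$, so dominated convergence lets me interchange $\tfrac{d}{d\theta}$ with $\Expect$ and with the (finite) trace, yielding $m'(\theta) = \Expect \ntr[\mtx{X}\, \econst^{\theta \mtx{X}}]$. Setting $\theta = 0$ collapses this to $m'(0) = \ntr \Expect \mtx{X} = 0$ by the zero-mean hypothesis.

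For the third claim, the same differentiation argument supplies
\[
m''(\theta) = \Expect \ntr\!\bigl[\mtx{X}^2 \econst^{\theta \mtx{X}}\bigr] = \Expect \ntr\!\bigl[\bigl(\econst^{\theta \mtx{X}/2}\mtx{X}\bigr)^{*}\bigl(\econst^{\theta \mtx{X}/2}\mtx{X}\bigr)\bigr] \geq 0,
\]
where I used the cyclic property of the trace together with the commutation $\mtx{X}\, \econst^{\theta \mtx{X}/2} = \econst^{\theta \mtx{X}/2}\mtx{X}$ and the fact that $\econst^{\theta \mtx{X}/2}$ is Hermitian. Thus $m$ is convex on $\R$, and since $m'(0) = 0$, convexity forces $m'(\theta) \leq 0$ on $(-\infty, 0]$ and $m'(\theta) \geq 0$ on $[0, +\infty)$. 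I do not anticipate a serious obstacle; the only subtlety is the interchange of differentiation with expectation and trace, which is routine once the norm bound on $\mtx{X}$ is used to dominate the power-series expansion of $\econst^{\theta \mtx{X}}$ uniformly in $\theta$ on compact sets.
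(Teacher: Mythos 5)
Your proof is correct. A remark on context: the paper does not prove this proposition at all---it is stated as a restatement of \cite[Lemma 12.3]{paulin2016efron}---so you are supplying an argument the paper omits. Your route is essentially the standard one, with one small variation: to get $m(\theta)\geq 1$ you chain two scalar Jensen inequalities (first over the spectrum of $\theta\mtx{X}$, then over the law of $\ntr\mtx{X}$), whereas the usual argument applies matrix Jensen once, using convexity of $\mtx{A}\mapsto\ntr\econst^{\mtx{A}}$ on $\mathbb{H}_d$ (the same fact the paper invokes from \cite[Theorem 2.10]{carlen2010trace} in Section 4) to write $\E\ntr\econst^{\theta\mtx{X}}\geq \ntr\econst^{\theta\,\E\mtx{X}}=\ntr\Id=1$. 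Your version buys a slightly more elementary proof that needs only scalar convexity. The remaining steps---differentiation under the expectation justified by the uniform norm bound on $\mtx{X}$, $m'(0)=\ntr\E\mtx{X}=0$, and $m''(\theta)=\E\ntr\bigl[\mtx{X}^2\econst^{\theta\mtx{X}}\bigr]\geq 0$ (your $M^{\adj}M$ factorization works; equivalently, $\mtx{X}^2\econst^{\theta\mtx{X}}$ is a product of commuting positive-semidefinite matrices, hence has nonnegative trace), followed by convexity plus $m'(0)=0$ to fix the sign of $m'$---are exactly the standard argument and are carried out correctly.
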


Using the matrix Laplace transform method, one can convert estimates on the trace mgf into bounds on the extreme eigenvalues of a random matrix.   For example, see~\cite[Proposition 3.3]{mackey2014}. In particular, having an explicit bound on the trace mgf, we can obtain concrete estimates on the maximum eigenvalue. See \cite[Section 4.2.4]{mackey2014} for a proof. 

\begin{proposition}\label{prop:matrix_exponential_concentration} Let $\mtx{X}\in\mathbb{H}_d$ be a random matrix with normalized trace mgf $m(\theta):= \E\ntr \econst^{\theta\mtx{X}}$. Assume that there are constants $c_1,c_2 \geq 0$ for which 
\[\log m(\theta) \leq \frac{c_1\theta^2}{2(1-c_2\theta)}\quad \text{when}\ 0\leq \theta<\frac{1}{c_2}.\]
Then for all $t\geq0$, 
\[\Prob{\lambda_{\max}(\mtx{X})\geq t} \leq d\cdot \exp\left(\frac{-t^2}{2c_1+2c_2t}\right).\]
Furthermore, 
\[\E\lambda_{\max}(\mtx{X})\leq \sqrt{2c_1\log d} + c_2\log d.\] 
\end{proposition}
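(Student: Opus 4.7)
The plan is to apply the standard matrix Laplace transform method, which reduces eigenvalue estimates to bounds on the trace mgf $m(\theta) = \E\ntr\econst^{\theta\mtx{X}}$. The one key spectral ingredient is the inequality $\lambda_{\max}(\econst^{\theta\mtx{X}}) \leq \trace\econst^{\theta\mtx{X}} = d\cdot\ntr\econst^{\theta\mtx{X}}$, which holds for every $\theta \geq 0$ because $\econst^{\theta\mtx{X}}$ is positive semidefinite.

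For the tail bound, I will combine monotonicity of $x \mapsto \econst^{\theta x}$ with Markov's inequality to obtain
$$
\Prob{\lambda_{\max}(\mtx{X}) \geq t} \leq \econst^{-\theta t}\, \E\lambda_{\max}(\econst^{\theta\mtx{X}}) \leq d \cdot \exp\!\left(-\theta t + \frac{c_1 \theta^2}{2(1-c_2\theta)}\right),
$$
valid for every $0 \leq \theta < 1/c_2$, where the last step applies the trace bound together with the mgf hypothesis. The next step is to optimize the exponent. Setting $\theta^{\ast} = t/(c_1 + c_2 t)$, which lies in $[0, 1/c_2)$, one has $1 - c_2\theta^{\ast} = c_1/(c_1 + c_2 t)$, and a short computation gives $-\theta^{\ast} t + c_1(\theta^{\ast})^2/(2(1-c_2\theta^{\ast})) = -t^2/(2(c_1 + c_2 t))$, which is the claimed exponent.

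For the expectation bound, I will apply Jensen's inequality to the convex map $x \mapsto \econst^{\theta x}$ and the same trace bound to get $\theta\,\E\lambda_{\max}(\mtx{X}) \leq \log d + \log m(\theta)$, so that
$$
\E\lambda_{\max}(\mtx{X}) \leq \frac{\log d}{\theta} + \frac{c_1\theta}{2(1-c_2\theta)} \quad\text{for every $0 < \theta < 1/c_2$.}
$$
Setting $u = \sqrt{2c_1\log d}$ and picking $\theta = u/(c_1 + c_2 u)$, direct substitution produces $\log d/\theta = c_1\log d/u + c_2\log d$ and $c_1\theta/(2(1-c_2\theta)) = u/2$, so the sum collapses to $c_1\log d/u + u/2 + c_2\log d = \sqrt{2c_1\log d} + c_2\log d$, as required.

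The main obstacle is purely computational, namely guessing the right $\theta$ in each of the two optimizations; each choice is dictated by balancing the linear and rational pieces in $\theta$, and once selected the arithmetic collapses. No further probabilistic or spectral input is required beyond the matrix Laplace transform and the elementary PSD bound $\lambda_{\max}(\mtx{M}) \leq \trace\mtx{M}$ for $\mtx{M} \psdge \mtx{0}$.
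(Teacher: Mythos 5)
Your proof is correct and follows exactly the route the paper intends: the statement is quoted from Mackey et al.\ (the paper points to \cite[Prop.~3.3 and Sec.~4.2.4]{mackey2014}), whose argument is precisely this matrix Laplace transform method---bound $\lambda_{\max}(\econst^{\theta\mtx{X}})\leq\trace\econst^{\theta\mtx{X}}$, apply Markov's inequality (resp.\ Jensen for the expectation), insert the mgf hypothesis, and optimize in $\theta$. Your choices $\theta^{\ast}=t/(c_1+c_2t)$ and $\theta=u/(c_1+c_2u)$ with $u=\sqrt{2c_1\log d}$ are the standard optimizers and the arithmetic checks out (the degenerate cases $c_1=0$ or $d=1$ are handled by the obvious limiting argument).
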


We have applied Proposition~\ref{prop:matrix_exponential_concentration} to the trace mgf bounds in Theorem~\ref{thm:exponential_moment} to derive exponential concentration inequalities as those in Theorem~\ref{thm:exponential_concentration}.

\section{Mean value trace inequality}
\label{apdx:mean_value}

In this section, we establish the mean value trace inequality, Lemma~\ref{lem:mean_value_inequality}.
This result is a generalization of~\cite[Lemmas 9.2 and 12.2]{paulin2016efron}.
The proof is similar in spirit, but it uses some additional ingredients from matrix analysis.

The key idea is to use tensorization to lift a pair of noncommuting matrices
to a pair of commuting tensors.  This step gives us access to tools that are
not available for general matrices.
For any two Hermitian matrices $\mtx{X},\mtx{Y}\in \mathbb{H}_d$,
define a linear operator $\mtx{X}\otimes \mtx{Y} : \mathbb{M}_d \to \mathbb{M}_d$ whose action is given by 
\[(\mtx{X}\otimes \mtx{Y})(\mtx{Z}) = \mtx{X}\mtx{Z}\mtx{Y}\quad \text{for all}\ \mtx{Z}\in \mathbb{M}_d.\]
The linear operator $\mtx{X}\otimes \mtx{Y}$ is self-adjoint with respect
to the standard inner product on $\mathbb{M}_d$: 
\[\langle(\mtx{X}\otimes \mtx{Y})(\mtx{Z}_1),\mtx{Z}_2\rangle_{\mathbb{M}_d} = \trace\left[\mtx{Y}\mtx{Z}_1^*\mtx{X}\mtx{Z}_2\right] = \trace\left[\mtx{Z}_1^*\mtx{X}\mtx{Z}_2\mtx{Y}\right] = \langle\mtx{Z}_1,(\mtx{X}\otimes \mtx{Y})(\mtx{Z}_2)\rangle_{\mathbb{M}_d}\quad \text{for all}\ \mtx{Z}_1,\mtx{Z}_2\in \mathbb{M}_d.\]
Therefore, for any function $\varphi:\mathbb{R}\rightarrow \mathbb{R}$, we can define
the tensor function $\varphi(\mtx{X}\otimes \mtx{Y})$ using the spectral resolution
of $\mtx{X} \otimes \mtx{Y}$.  It is not hard to check that 
\[\varphi(\mtx{X}\otimes \Id) = \varphi(\mtx{X})\otimes \Id\quad \text{and}\quad \varphi(\Id\otimes \mtx{Y}) = \Id\otimes \varphi(\mtx{Y}).\]
Note that the tensors $\mtx{X}\otimes\Id$ and $\Id\otimes\mtx{Y}$ commute with each other,
regardless of whether $\mtx{X}$ and $\mtx{Y}$ commute.

\begin{proof}[Proof of Lemma~\ref{lem:mean_value_inequality}]
We can write 
\begin{align*}
\varphi(\mtx{A})-\varphi(\mtx{B}) &= \big(\varphi(\mtx{A})\otimes \Id -\Id\otimes \varphi(\mtx{B})\big) (\Id)\\
&=  \big(\varphi(\mtx{A}\otimes \Id) -\varphi(\Id\otimes \mtx{B})\big) (\Id) 
= \int_0^1\frac{\diff{} }{\diff \tau}\varphi\big(\tau \mtx{A}\otimes \Id + (1-\tau) \Id\otimes \mtx{B}\big) (\Id) \idiff \tau.
\end{align*}
Since $\mtx{A}\otimes \Id$ commutes with $\Id \otimes \mtx{B}$, we have 
\[\frac{\diff{} }{\diff \tau}\varphi\big(\tau \mtx{A}\otimes \Id + (1-\tau) \Id\otimes \mtx{B}\big)  = \varphi'\big(\tau \mtx{A}\otimes \Id + (1-\tau) \Id\otimes \mtx{B}\big) (\mtx{A}\otimes \Id- \Id\otimes \mtx{B}).\]
As a consequence, 
\begin{align*}
\varphi(\mtx{A})-\varphi(\mtx{B}) &= \int_0^1\varphi'\big(\tau \mtx{A}\otimes \Id + (1-\tau) \Id\otimes \mtx{B}\big) (\mtx{A}\otimes \Id- \Id\otimes \mtx{B})(\Id) \idiff \tau \\
&= \int_0^1\varphi'\big(\tau \mtx{A}\otimes \Id + (1-\tau) \Id\otimes \mtx{B}\big)(\mtx{A}- \mtx{B}) \idiff \tau
=: \int_0^1\mathcal{M}_{\tau}(\mtx{A},\mtx{B})(\mtx{A}- \mtx{B})\idiff \tau.
\end{align*}
Since $\mathcal{M}_{\tau}(\mtx{A},\mtx{B})$ is a self-adjoint linear operator on the Hilbert space $\mathbb{M}_d$, we can apply the operator Cauchy--Schwarz inequality~\cite[Lemma A.2]{paulin2016efron}.  For any $s>0$,
\begin{multline}\label{step:mean_value_1}
\trace\left[\mtx{C} \,\big(\varphi(\mtx{A})-\varphi(\mtx{B})\big)\right] = \ip{ \mtx{C} }{ \varphi(\mtx{A})-\varphi(\mtx{B})}_{\mathbb{M}_d}
= \int_0^1 \ip{ \mtx{C} }{ \mathcal{M}_{\tau}(\mtx{A},\mtx{B})(\mtx{A}- \mtx{B}) }_{\mathbb{M}_d} \idiff \tau \\
\leq \int_0^1\left[ \frac{s}{2} \ip{ \mtx{A}-\mtx{B} }{ \abs{\mathcal{M}_{\tau}(\mtx{A},\mtx{B})}(\mtx{A}-\mtx{B}) }_{\mathbb{M}_d} + \frac{s^{-1}}{2} \ip{ \mtx{C} }{ \abs{\mathcal{M}_{\tau}(\mtx{A},\mtx{B})}(\mtx{C})}_{\mathbb{M}_d}\right] \idiff \tau.
\end{multline}
By assumption, $\psi := \abs{\varphi'}$ is convex.  Thus, for all $\tau\in[0,1]$,
\begin{align*}
\abs{\mathcal{M}_{\tau}(\mtx{A},\mtx{B})} &= \abs{ \varphi'\big(\tau \mtx{A}\otimes \Id + (1-\tau) \Id\otimes \mtx{B}\big)}
= \psi\big(\tau \mtx{A}\otimes \Id + (1-\tau) \Id\otimes \mtx{B}\big)\\
&\preccurlyeq \tau \cdot \psi\left(\mtx{A}\otimes \Id\right) + (1-\tau)\cdot \psi\left(\Id\otimes \mtx{B}\right)
= \tau\cdot \psi(\mtx{A})\otimes \Id + (1-\tau)\cdot \Id\otimes \psi(\mtx{B}).
\end{align*}
The argument above depends on the commutativity of $\mtx{A}\otimes \Id$ and $\Id\otimes \mtx{B}$, which means that we do not need $\psi$ to be operator convex.  Hence, for any $\mtx{Z}\in\mathbb{M}_d$, 
\begin{multline}\label{step:mean_value_2}
\int_0^1\ip{ \mtx{Z} }{ \abs{\mathcal{M}_{\tau}(\mtx{A},\mtx{B})}(\mtx{Z}) }_{\mathbb{M}_d} \idiff \tau \leq \int_0^1  \ip{ \mtx{Z} }{ \big(\tau\cdot \psi(\mtx{A})\otimes \Id + (1-\tau)\cdot \Id\otimes \psi(\mtx{B})\big)(\mtx{Z})}_{\mathbb{M}_d} \idiff \tau\\
= \frac{1}{2}\big( \ip{ \mtx{Z} }{ \psi(\mtx{A}) \,\mtx{Z} }_{\mathbb{M}_d} + \ip{ \mtx{Z} }{ \mtx{Z}\,\psi(\mtx{B}) }_{\mathbb{M}_d} \big)
= \frac{1}{2}\big(\trace\left[\mtx{Z}\mtx{Z}^*\, \psi(\mtx{A}) \right] + \trace\left[\mtx{Z}^*\mtx{Z} \,\psi(\mtx{B}) \right]\big).
\end{multline}
Applying \eqref{step:mean_value_2} to \eqref{step:mean_value_1}, substituting $\mtx{A}-\mtx{B}$ and $\mtx{C}$ for $\mtx{Z}$,
we arrive at 
\[\trace\left[\mtx{C} \, (\varphi(\mtx{A})-\varphi(\mtx{B}))\right]\leq \frac{1}{4} \trace\left[\left(s\,(\mtx{A}-\mtx{B})^2+ s^{-1}\,\mtx{C}^2\right)\big(\psi(\mtx{A})  + \psi(\mtx{B}) \big) \right].\]
Optimize over $s>0$ to achieve the stated result. \end{proof}

\section{Connection with Stein's method}
\label{apdx:Stein_method}

There is an established approach to proving
matrix concentration inequalities using
the method of exchangeable pairs;
see~\cite{chatterjee2005concentration} for the scalar setting
and \cite{mackey2014,paulin2016efron}
for matrix extensions.
As mentioned in Section~\ref{sec:concentration_history}, the approach in \cite[Sections 10--11]{paulin2016efron}
implicitly relies on a discrete version of the local ergodicity condition.
A limiting version of this argument can also be used to derive the results in
our paper.  This appendix details the connection.

Given a reversible, exponentially ergodic Markov process $(Z_t)_{t\geq 0}$ with a stationary measure $\mu$, one can construct an exchangeable pair as follows.  Fix a time $t > 0$.  Let $Z$ be drawn from the measure $\mu$, and let $\tilde{Z} = Z_t$ where $Z_0=Z$.  By reversibility, it is easy to check that $(Z,\tilde{Z})$ is an exchangeable pair; that is, $(Z,\tilde{Z})$ has the same distribution as $(\tilde{Z},Z)$.

For a zero-mean function $\mtx{f}:\Omega\rightarrow \mathbb{H}_d$,
define the function $\mtx{g}_t: \Omega\rightarrow \mathbb{H}_d$ by 
\[\mtx{g}_t = \left(\frac{P_0-P_t}{t}\right)^{-1}\mtx{f} = t\sum_{k=0}^\infty P_{kt}\mtx{f}.\] 
Then $(\mtx{f}(Z),\mtx{f}(\tilde{Z}))$ is a \emph{kernel Stein pair} associated with the kernel 
\[\mtx{K}_t(z,\tilde{z}) = \frac{\mtx{g}_t(z) - \mtx{g}_t(\tilde{z})}{t}\quad\text{for all $z,\tilde{z}\in \Omega$.} \]
By construction, for all $z, \tilde{z} \in \Omega$,
\begin{gather}\label{eqn:kernel_property_1}
\mtx{K}_t(z,\tilde{z}) = -\mtx{K}_t(\tilde{z},z); \\ \label{eqn:kernel_property_2}
\E\left[\mtx{K}_t(Z,\tilde{Z})\,|\,Z = z\right] = \mtx{f}(z).
\end{gather}
This construction is inspired by Stein's work~\cite{stein1986approximate};
see Chatterjee's PhD thesis~\cite[Section 4.1]{chatterjee2005concentration}.
One consequence of the properties \eqref{eqn:kernel_property_1} and \eqref{eqn:kernel_property_2} is the identity
\begin{equation}\label{eqn:kernel_identity}
\E[\mtx{f}(Z) \, \varphi(\mtx{f}(Z))] = \frac{1}{2}\E\left[\mtx{K}_t(Z,\tilde{Z})\left(\varphi(\mtx{f}(Z))-\varphi(\mtx{f}(\tilde{Z}))\right)\right],
\end{equation}
which holds for any measurable function $\varphi:\mathbb{H}_d\rightarrow\mathbb{H}_d$ that satisfies the regularity condition $\|\mtx{K}_t(Z,\tilde{Z})\,\varphi(\mtx{f}(Z))\|<+\infty$ almost surely.
Paulin et al.~\cite{paulin2016efron} use~\eqref{eqn:kernel_identity}
to establish matrix Efron--Stein inequalities, much in the same
way that we derive Theorem~\ref{thm:polynomial_moment}
and Theorem~\ref{thm:exponential_moment}.

The approach we undertake in this paper is not exactly parallel with the approach in Paulin et al.~\cite{paulin2016efron}. Let us elaborate.  Take the limit of $\mtx{g}_t$ as $t \downarrow 0$, using $\mL = \lim_{t\downarrow0}(P_t-P_0)/t$. We get 
\begin{equation}\label{eqn:inverse_L}
\mtx{g}_0 = (-\mL)^{-1}\mtx{f} = \int_{0}^\infty P_t \mtx{f} \idiff t.
\end{equation}
Indeed, by ergodicity, one can check that 
\[\mtx{f} = \mtx{f}-\E_{\mu}\mtx{f} =  P_0\mtx{f}-P_\infty\mtx{f} = -\int_{0}^\infty \frac{\diff{} }{\diff t}P_t\mtx{f} \idiff t =-\mL \int_{0}^\infty P_t\mtx{f} \idiff t = -\mL \mtx{g}_0\quad \text{in $L_2(\mu)$}.\]
Consequently, we have
\begin{equation}\label{eqn:limit_kernel_identity}
\E_{\mu} [\mtx{f}\,\varphi(\mtx{f})] = -\E_{\mu} \left[\mL(\mtx{g}_0)\, \varphi(\mtx{f})\right] = \E_{\mu} \Gamma(\mtx{g}_0,\varphi(\mtx{f})).
\end{equation}
The identity \eqref{eqn:kernel_identity} is just a discrete version of the formula~\eqref{eqn:limit_kernel_identity}.
In contrast, the argument in this paper is based on the identity
\[
\E_{\mu} [\mtx{f}\,\varphi(\mtx{f})]
	= \int_0^{\infty} \Expect_{\mu} \Gamma( P_t \mtx{f}, \phi(\mtx{f}) ) \idiff{t}.
\]
The integral is not in the same place!
Our approach is technically a bit simpler because it does not require
us to justify the convergence of the integral~\eqref{eqn:inverse_L}.
Nevertheless, our work is strongly inspired by the tools and techniques
developed by Paulin et al.~\cite{paulin2016efron} in the discrete setting.

\section*{Acknowledgments}

We thank Ramon van Handel for his feedback on an early version of this manuscript.
He is responsible for the observation and proof that matrix Poincar{\'e} inequalities
are equivalent with scalar Poincar{\'e} inequalities, and we are grateful to him
for allowing us to incorporate these ideas.

DH was funded by NSF grants DMS-1907977 and DMS-1912654.
JAT gratefully acknowledges funding from ONR awards N00014-17-12146 and N00014-18-12363,
and he would like to thank his family for their support in these difficult times.

\def\bibfont{\footnotesize}

\bibliographystyle{myalpha}
\newcommand{\etalchar}[1]{$^{#1}$}

\end{document}